\newtheorem*{theo2A}{Theorem 2.A}
\newtheorem*{theo2B}{Theorem 2.B}
\newtheorem*{theo3A}{Theorem 3.A}
\newtheorem{theo}{Theorem}[section]
\newtheorem{lem}{Lemma}[section]
\newtheorem{cor}{Corollary}[section]
\newcommand{\ol}{\overline}
\newcommand{\be}{\begin{equation}}
\newcommand{\ee}{\end{equation}}
\newcommand{\beas}{\begin{eqnarray*}}
\newcommand{\eeas}{\end{eqnarray*}}
\newcommand{\bea}{\begin{eqnarray}}
\newcommand{\eea}{\end{eqnarray}}
\numberwithin{equation}{section}
\begin{document}

\title[E\MakeLowercase{stimation for the logarithmic partial derivative and}......]{\LARGE E\Large\MakeLowercase{stimation for the logarithmic partial derivative and} W\MakeLowercase{iman}-V\MakeLowercase{aliron theory in several complex variables}}

\date{}
\author[J. F. X\MakeLowercase{u}, S. M\MakeLowercase{ajumder} \MakeLowercase{and} N. S\MakeLowercase {arkar}]{J\MakeLowercase{unfeng} X\MakeLowercase{u}, S\MakeLowercase{ujoy} M\MakeLowercase{ajumder}$^*$ \MakeLowercase{and} N\MakeLowercase{abadwip} S\MakeLowercase{arkar}}
\address{Department of Mathematics, Wuyi University, Jiangmen 529020, Guangdong, People's Republic of China.}
\email{xujunf@gmail.com}
\address{Department of Mathematics, Raiganj University, Raiganj, West Bengal-733134, India.}
\email{sm05math@gmail.com, sjm@raiganjuniversity.ac.in}
\address{Department of Mathematics, Raiganj University, Raiganj, West Bengal-733134, India.}
\email{naba.iitbmath@gmail.com}

\renewcommand{\thefootnote}{}
\footnote{2020 \emph{Mathematics Subject Classification}: 32A20, 32A22 and 32H30.}
\footnote{\emph{Key words and phrases}: Meromorphic functions in $\mathbb{C}^m$, partial derivative, Nevanlinna theory in $\mathbb{C}^m$, Logarithmic partial derivative, Wiman-Valiron theory, solution of partial differential equation.}
\footnote{*\emph{Corresponding Author}: Sujoy Majumder.}

\renewcommand{\thefootnote}{\arabic{footnote}}
\setcounter{footnote}{0}

\begin{abstract} The first objective of the paper is to estimate logarithmic partial derivative for meromorphic functions in several complex variables. Our estimations for logarithmic partial derivatives extend the results of Gundersen \cite{GG2} to the higher dimensions. The second objective of the paper is to study Wiman-Valiron theory into higher dimensions. The third objective of this paper is to study the growth of solutions of a class of complex partial differential equations and our obtained result is the extension of the result of Cao \cite{c2} from $\mathbb{C}$ to $\mathbb{C}^m$.
\end{abstract}

\thanks{Typeset by \AmS -\LaTeX}
\maketitle

\section{{\bf Preliminaries on Nevanlinna Theory in $\mathbb{C}^m$}}
We first recall some of the standard notation of Nevanlinna theory in several complex variables (see \cite{HLY1,MR1, WS1}).
We define $\mathbb{Z}_+=\mathbb{Z}[0,+\infty)=\{n\in \mathbb{Z}: 0\leq n<+\infty\}$ and $\mathbb{Z}^+=\mathbb{Z}(0,+\infty)=\{n\in \mathbb{Z}: 0<n<+\infty\}$.
As on any complex manifold $M$, the exterior derivative $d$ splits
\[\displaystyle d= \partial+ \bar{\partial}\]
and twists to
\[\displaystyle d^c= \frac{\iota}{4\pi}\left(\bar{\partial}- \partial\right).\]

Clearly $dd^{c}= \frac{\iota}{2\pi}\partial\bar{\partial}$. A non-negative function $\tau:M\to \mathbb{R}[0,b)\;(0<b\leq \infty)$ of class $\mathbb{C}^{\infty}$ is said to be an exhaustion of $M$ if $\tau^{-1}(K)$ is compact whenever $K$ is.

\smallskip
A hermitian vector space $W$ is a complex vector space together with a positive definite hermitian form on $W$. The hermitian product of $a\in W$ and $b\in W$ is denoted by $(a|b)$. The norm of $a$ is $||a||=\sqrt{(a|a)}$. If $W=\mathbb{C}^m$, then the hermitian metric shall always be given by $(a|b)=\sideset{}{_{i=1}^m}{\sum} a_i\ol b_i$, where $a=(a_1,a_2,\ldots,a_m)\in\mathbb{C}^m$ and $b=(b_1,b_2,\ldots,b_m)\in\mathbb{C}^m$.

\smallskip
An exhaustion $\tau$ of $W$ is defined by $\tau(z)=||z||^2$. The standard Kaehler metric on $W$ is given by $\upsilon=dd^c\tau>0$.
On $W\backslash  \{0\}$, we define
$\displaystyle \omega=dd^c\log \tau\geq 0$ and $\sigma=d^c\log \tau \wedge \omega^{m-1}$,
where $m=\dim(W)$. We define
\[\displaystyle B(a,r)=\{z\in W: ||z-a||<r\},\;\;B(r)=\{z\in W: ||z||<r\},\;\; R(a,r,s)=\ol B(a,r)-B(a,s),\]
\[\displaystyle R(r,s)=R(0,r,s),\;\;S(a,r)=R(a, r, r)\;\;\text{and}\;\; S(r)=S(0,r).\]

\smallskip
Let $U\subset W$ be an open set. A closed set $A\subset U$ is said to be analytic, if for each $a\in A$, there are a finite number of holomorphic functions $f_1,f_2,\ldots,f_l$ defined in a neighbourhood $N(a)$ of $a$ such that (see \cite[pp. 42]{NW}) $A\cap N(a)=\{z\in N(a):f_1(z)=f_2(z)=\cdots=f_l(z)=0\}$.

\smallskip
If $f_j(z),1\leq j\leq l$ can be taken so that their differentials at $a\in A$, $df_1(a), df_2(a), \ldots,df_l(a)$ are linearly independent, then $a$ is called a regular point of $A$. A point of $A$ which is not regular called a singular point. The subset of all singular points of $A$ is denoted by $S(A)$. Set $R(A)=A\backslash  S(A)$. If $S(A)=\phi$, $A$ is said to be regular.

\smallskip
Let $R(A)=\bigcup_{\lambda}A_{\lambda}'$ be the decomposition into the connected components. Then the closure $A=\ol{A'_{\lambda}}$ is analytic and $A=\bigcup_{\lambda}A_{\lambda}$ is a locally finite covering. Set $\dim(A)=\max_a \dim_a(A)$, where $\dim_a(A)=\max_{a\in A_{\lambda}}\dim(A_{\lambda})$. If $\dim_a(A)=\dim(A)$ at all points $a\in A$, then $A$ is said to be of pure dimension $\dim(A)$.

\medskip
Abbreviate $\mathbb{P}^m=\mathbb{P}(\mathbb{C}^{m+1})$, the projective space. Identify the compactified plane $\mathbb{C}\cup\{\infty\}$ with $\mathbb{P}^1=\mathbb{P}(\mathbb{C}^2)$ by setting 
\[\mathbb{P}(z,w)=z/w,\;\;\mathbb{P}(z,1)=z\;\;\text{and}\;\;\mathbb{P}(w,0)=\mathbb{P}(1,0)=\infty\]
if $z\in\mathbb{C}$ and $w\in\mathbb{C}\backslash \{0\}$.
Let $U\neq \varnothing$ be an open connected subset of $\mathbb{C}^m$. Let $\mathcal{O}(U,\mathbb{C}^2)$ be the set of all holomorphic vector functions $u:U\to \mathbb{C}^2$. Define $\mathcal{O}_*(U,\mathbb{C}^2)=\mathcal{O}(U,\mathbb{C}^2)\backslash \{0\}$. Two holomorphic vector functions $u, v\in \mathcal{O}_*(U,\mathbb{C}^2)$ are called equivalent if $u=\lambda v$ for some $\lambda\in \mathbb{C}$. This defines an equivalence relation on $\mathcal{O}_*(U, \mathbb{C}^2)$. An equivalence class $f$ is said to be a meromorphic map from $U$ into $\mathbb{P}^1$ and each $u$ is said to be a representation of $f$ (see \cite[pp. 630]{WS3}).
Let $u$ and $\tilde u$ be reduced representations of the meromorphic map $f$. Then the indeterminacy
\[\displaystyle I_f=\{z\in U: u(z)=0\}=\{z\in U: \tilde u(z)=0\}\]
is well defined analytic set, with $\dim I_f\leq  m-2$. If $z\in U-I_f$, then
\[\displaystyle f(z)=\mathbb{P}(u(z))=\mathbb{P}(\tilde u(z))\in \mathbb{P}(\mathbb{C}^2)=\mathbb{P}^1\]
is well defined and the map $f:U-I_f\to \mathbb{P}(\mathbb{C}^2)=\mathbb{P}^1$ is holomorphic. Therefore a holomorphic vector function $F: U\rightarrow \mathbb{C}^2$ is said to be a representation of $f$ on $U$ if $F\not\equiv 0$ and if
\[\displaystyle f(z)=\mathbb{P}(F(z)),\;\;z\in U-F^{-1}(\{0\}).\]

The representation is said to be reduced if $\dim F^{-1}(\{0\})\leq  m-2$. Now for a non-constant meromorphic function $f:\mathbb{C}^m\to \mathbb{P}^1$ such that $f^{-1}(\{\infty\})\neq \mathbb{C}^m$, let $F=(g,h):\mathbb{C}^m\rightarrow \mathbb{C}^2$ be a reduced representation of $f$. Then
\[\displaystyle f(z)=\mathbb{P}(F(z))=g(z)/h(z),\;\;z\in \mathbb{C}^m-F^{-1}(\{0\}).\]

Clearly $I_f=\{z\in \mathbb{C}^m: g(z)=h(z)=0\}$ and $\dim\{z\in \mathbb{C}^m: g(z)=h(z)=0\}\leq m-2$.

\smallskip
Let $G\not=\varnothing$ be an open subset of $W$, hermitian vector space of $\dim(W)=m\geq 1$. Let $f$ be a holomorphic function on $G$ and $I=(i_1,\ldots,i_m)\in\mathbb{Z}^m_+$ be a multi-index. We define
\[\displaystyle \partial^{I}(f(z))=\frac{\partial^{|I|}f(z)}{\partial z_1^{i_1}\cdots \partial z_m^{i_m}},\]
where $|I|=\sum_{j=1}^m i_j$. Then we can write $f(z)=\sum_{i=0}^{\infty}P_i(z-a)$, where the term $P_i(z-a)$ is either identically zero or a homogeneous polynomial of degree $i$. Certainly the zero multiplicity $\mu^0_f(a)$ of $f$ at a point $a\in G$ is defined by $\mu^0_f(a)=\min\{i:P_i(z-a)\not\equiv 0\}$.

\smallskip
Let $f$ be a meromorphic function on $G$. Then there exist holomorphic functions $g$ and $h$ such that $hf=g$ in $\mathbb{C}^m$ and $g$ and $h$ are coprime at every point of $\mathbb{C}^m$ \cite[pp. 186]{LH1}. Therefore the $c$-multiplicity of $f$ is just $\mu^c_f=\mu^0_{g-ch}$ if $c\in\mathbb{C}$ and $\mu^c_f=\mu^0_h$ if $c=\infty$. The function $\mu^c_f: \mathbb{C}^m\to \mathbb{Z}$ is nonnegative and is called the $c$-divisor of $f$. If $f\not\equiv 0$ on each component of $G$, then $\nu=\mu_f=\mu^0_f-\mu^{\infty}_f$ is called the divisor of $f$. We define $\displaystyle \text{supp}\; \nu=\ol{\{z\in G: \nu(z)\neq 0\}}$.
Clearly $\text{supp}\; \nu$ is either empty or an analytic set of pure dimension $m-1$.

For $t>0$, the counting function $n_{\nu}$ is defined by
\beas \displaystyle n_{\nu}(t)=t^{-2(m-1)}\int_{A\cap B(t)}\nu\; \upsilon^{m-1},\eeas
where $A=\text{supp}\;\nu$. $n_{\nu}$ increases if $\nu$ is non-negative.
The valence function of $\nu$ is defined by
\[\displaystyle N_{\nu}(r)=N_{\nu}(r,r_0)=\int_{r_0}^r n_{\nu}(t)\frac{dt}{t}\;\;(r\geq r_0).\]

For $a\in\mathbb{P}^1$, we write $n_{\mu_f^a}(t)=n(t,a;f)$ if $a\in\mathbb{C}$ and $n_{\mu_f^a}(t)=n(t,f)$ if $a=\infty$. Also $N_{\mu_f^a}(r)=N(r,a;f)$ if $a\in\mathbb{C}$ and $N_{\mu_f^a}(r)=N(r,f)$ if $a=\infty$.

\medskip
Take $0<R\leq +\infty$. Let $f\not\equiv 0$ be a meromorphic function on $W(R)$. Let $0<r_0<r<R$. Now with the help of the positive logarithm function, we define the proximity function of $f$ by
\[\displaystyle m(r, f)=\int_{S(r)} \log^+ |f|\;\sigma \geq  0.\]

The characteristic function of $f$ is defined by $T(r,f)=m(r,f)+N(r,f)$. We know that
\bea\label{la}\displaystyle T\left(r,\frac{1}{f}\right)=T(r,f)-\int_{S(r_0)} \log |f|\;\sigma.\eea

We define $m(r,a;f)=m(r,f)$ if $a=\infty$ and $m(r,a;f)=m\big(r,\frac{1}{f-a}\big)$ if $a$ is finite complex number. Now if $a\in\mathbb{C}$, then the first main theorem of Nevanlinna theory becomes $m(r,a;f)+N(r,a;f)=T(r,f) + O(1)$, where $O(1)$ denotes a bounded function when $r$ is sufficiently large.
We define the order and the hyper-order of $f$ by
\[\rho(f):=\limsup _{r \rightarrow \infty} \frac{\log T(r, f)}{\log r}\;\text{and}\;\rho_1(f):=\limsup _{r \rightarrow \infty} \frac{\log \log T(r, f)}{\log r}.\]

\medskip
In recent years, the Nevanlinna value distribution theory in several complex variables has emerged as a prominent and rapidly growing area of research in complex analysis. This field has garnered significant attention due to its deep theoretical insights and wide-ranging applications in mathematics and related disciplines. Researchers (see \cite{ck}-\cite{CLL}, \cite{PVD2}, \cite{HZ1}, \cite{ps1}, \cite{BQL2}, \cite{BQL4}, \cite{FL}) have been particularly intrigued by its potential to extend classical results from one complex variable to higher-dimensional settings, as a result, this topic has become a focal point for contemporary studies in several complex variables.

\medskip
The first objective of this paper is to to estimate logarithmic partial derivative for meromorphic functions in several complex variables. In section 2, we extend the results of Gundersen \cite[Theorem 3, Corollary 2]{GG2} from $\mathbb{C}$ to $\mathbb{C}^m$. In section 3, we study Wiman-Valiron theory into higher dimensions. In section 4, we study the growth of solutions of a class of complex partial differential equations.

\section{\bf{Estimation for the logarithmic partial derivative for meromorphic function in several complex variables}}
It is well-known that the fundamental estimate for the logarithmic derivative (see \cite[pp. 55]{WKH2}), is expressed as
\[\displaystyle m\left(r, \frac{f^{(k)}}{f}\right)=S(r,f),\]
where $f$ denotes a non-constant meromorphic function in $\mathbb{C}$ and $k$ is a positive integer. This estimate finds wide applications in both the theory of meromorphic functions and algebraic differential equations. However, in certain problems concerning non-constant meromorphic functions of finite order, a more informative result can be derived by employing a different estimate. That's why instead of utilizing the aforementioned form, many authors considered an estimate of the form:
\bea\label{gd1} \displaystyle \left|\frac{f^{(k)}(z)}{f(z)}\right| \leqslant C|z|^\beta,\eea
where $z$ belongs to a specific set $G$. Here, $\beta$ and $C$ are finite real constants, and $k$ is a positive integer. For example, in prior works (such as \cite{GG1}, \cite{MO}) estimates of this form have been used to establish results related to solutions of linear differential equations of the form:
\[f^{(2)}(z) + A(z) f^{(1)}(z)+ B(z)f(z)=0,\]
where $A(z)$ and $B(z)$ are entire functions. In 1988, Gundersen \cite[Theorem 3, Corollary 2]{GG2} proved estimates that are both improvements and generalizations of these earlier estimates of the form (\ref{gd1}). For meromorphic functions of finite order, the estimates of Gundersen are best possible in a certain sense.

\smallskip
In 1988, Gundersen \cite{GG2} introduced the useful notation $(f, H)$, where $f$ is a transcendental meromorphic function in $\mathbb{C}$, and $H=\left\{(k_1, j_1), (k_2, j_2), \ldots, (k_q, j_q)\right\}$, a finite set of distinct integer pairs: The pairs satisfy the condition that $k_i>j_i\geq 0$ for $i=1,\ldots,q$. Additionally, for a non-negative integer $j$, we define $n_j(t)$ as the sum of the number of zeros and poles of $f^{(j)}$ (counting multiplicities) within the disk 
\[|z| \leq t: n_j(t)=n(t, f^{(j)}, 0) + n(t, f^{(j)}, \infty).\]

The results of Gundersen \cite{GG2} are as follows.
\begin{theo2A}\cite[Theorem 3]{GG2} Let $(f, H)$ be a given pair, and let $\alpha>1$ be a given real constant. Then there exists a set $E \subset (1, \infty)$ that has finite logarithmic measure, and there exist constants $A>0$ and $B>0$ that depend only on $\alpha$ and $H$, such that for all $z$ satisfying $|z| \not\in E \cup [0,1]$ and for all $(k,j)\in H$, we have the following two estimates (where $r = |z|$):
\beas\label{sn1} \displaystyle \bigg|\frac{f^{(k)}(z)}{f^{(j)}(z)}\bigg|&\leq& A\left(\frac{T(\alpha r,f)}{r}+\frac{n_j(\alpha r)}{r}\log^{\alpha} r\log^+n_j(\alpha r)\right)^{k-j}\eeas
and
\beas \displaystyle \bigg|\frac{f^{(k)}(z)}{f^{(j)}(z)}\bigg|&\leq& B\left(\frac{T(\alpha r,f)}{r}\log^{\alpha} r\log T(\alpha r,f)\right)^{k-j}.\eeas
\end{theo2A}

\begin{theo2B}\cite[Corollary 2]{GG2}
Let $(f, H)$ be a given pair where $f$ has finite order $\rho$, and let $\varepsilon>0$ be a given constant. Then there exists a set $E\subset (1,\infty)$ that has finite logarithmic measure, such that for all $z$ satisfying $|z|\not \in E \cup [0,1]$ and for all $(k,j)\in H$, we have
\[\displaystyle \bigg|\frac{f^{(k)}(z)}{f^{(j)}(z)}\bigg| \leq |z|^{(k-j)(\rho-1+\varepsilon)}.\]
\end{theo2B}

\smallskip
The estimation of logarithmic derivatives (see Theorems 2.A and 2.B) is a central tool in Nevanlinna theory and the value distribution of meromorphic functions in the complex plane. It connects growth properties of meromorphic (or entire) functions with the distribution of their zeros and poles. The estimation of logarithmic derivatives is a technical but fundamental tool-it shows up whenever one wants to link growth, uniqueness and value distribution of meromorphic functions or to study solutions of complex differential equations.

\smallskip
We define
\[\displaystyle \partial_{z_i}(f(z))=\frac{\partial f(z)}{\partial z_i},\ldots,\partial^{l_i}_{z_i}(f(z))=\frac{\partial^{l_i} f(z)}{\partial z_i^{l_i}},\]
where $l_i\in \mathbb{Z}^m_+$ and $i=1,2,\ldots,m$.

\smallskip
We now state our results, which estimate logarithmic partial derivative for meromorphic functions in several complex variables by extending the results of Gundersen \cite[Theorem 3, Corollary 2]{GG2} from $\mathbb{C}$ to $\mathbb{C}^m$.

\begin{theo}\label{t2.1} Let $f:\mathbb{C}^m\to \mathbb{P}^1$ be a transcendental meromorphic function, $I=(i_1,\ldots,i_m)\in \mathbb{Z}^m_+$ be a multi-index. Also let $I_n=(i_{n1},\ldots,i_{nm})$ such that $|I_n|=|I|+n$, where $n\geq 1$ is an integer. Define 
\[n_{\partial^{I}(f)}(r)=n_{\mu_{\partial^{I}(f)}^0}(r)+n_{\mu_{\partial^{I}(f)}^{\infty}}(r).\]

Then there exists a set $E \subset (1, \infty)$ of finite logarithmic measure such that
\beas \left|\frac{\partial^{I_n}(f(z))}{\partial^{I}(f(z))}\right|\leq B\left(\frac{T(\alpha^2 r,f)}{r}+\frac{n_{\partial^{I}(f)}(\alpha^2 r)}{r}\right)^{|I_n|-|I|},\eeas
for $z\in \mathbb{C}^m$ such that $||z||\not\in [0,1]\cup E$, where the constant $B$ depends only on $n$ and $|I|$.
\end{theo}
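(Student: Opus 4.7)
The overall strategy is to reduce to Gundersen's one-variable Theorem 2.A via slicing along complex lines, apply the one-variable estimate slice by slice, and then transfer the slice Nevanlinna quantities back to the global $\mathbb{C}^m$ data of $f$.

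\textbf{Step 1 (Reduction to a single derivative).} I would first reduce to the case $|I_n|-|I|=1$ by a telescoping-product argument. Choose any sequence $j_1,\ldots,j_n\in\{1,\ldots,m\}$ so that $\partial^{I_n}=\partial_{z_{j_n}}\cdots\partial_{z_{j_1}}\partial^{I}$, and set $g_0=\partial^{I}f$, $g_k=\partial_{z_{j_k}}g_{k-1}$. Then
\[
\frac{\partial^{I_n}f(z)}{\partial^{I}f(z)}=\prod_{k=1}^{n}\frac{\partial_{z_{j_k}}g_{k-1}(z)}{g_{k-1}(z)},
\]
so a first-power bound of the form $B_0(T(\alpha^2 r,f)/r+n_{\partial^{I}f}(\alpha^2 r)/r)$ for each factor yields the claimed $n$-th power bound upon multiplication. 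I would also need two standard several-variable inputs: $(i)$ $n_{g_k}(r)\leq C\,n_{\partial^{I}f}(r)$, since partial differentiation only creates a controlled number of new zeros near existing zeros/poles of $\partial^I f$, and $(ii)$ the $\mathbb{C}^m$-analogue of the logarithmic derivative lemma $T(r,\partial^{I'}f)\leq C\,T(\beta r,f)+O(\log r)$ for any $\beta>1$ and multi-index $I'$ (cf.~\cite{MR1,WS1}).

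\textbf{Step 2 (Slicing plus Gundersen).} For the single-derivative case, fix $g=\partial^{I'}f$, a coordinate index $j$, and $z\in\mathbb{C}^m$ with $r=\|z\|$. Define the coordinate slice
\[
h(\zeta)=g(z_1,\ldots,z_{j-1},\zeta,z_{j+1},\ldots,z_m),
\]
a meromorphic function of one complex variable, so that $\partial_{z_j}g(z)/g(z)=h'(z_j)/h(z_j)$. Apply Theorem 2.A to $h$ at $\zeta=z_j$ with $H=\{(1,0)\}$ and a parameter $\alpha_0\in(1,\alpha)$: there is an exceptional set of $|z_j|$-values of finite log-measure off which
\[
\left|\frac{h'(z_j)}{h(z_j)}\right|\leq A\left(\frac{T(\alpha_0|z_j|,h)}{|z_j|}+\frac{n_h(\alpha_0|z_j|)}{|z_j|}\log^{\alpha_0}|z_j|\,\log^{+}n_h(\alpha_0|z_j|)\right).
\]
By pigeonhole, one can always choose a coordinate $j_*$ with $|z_{j_*}|\geq r/\sqrt m$, so $1/|z_{j_*}|$ is comparable to $1/r$; for an arbitrary target direction $\partial_{z_j}$ with small $|z_j|$, I would apply a unitary change of coordinates placing $z$ on a coordinate axis (so the derivative with respect to the new coordinate becomes a linear combination of old partials with bounded coefficients) and absorb the resulting $(\sqrt m)^n$-type constants into $B$.

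\textbf{Step 3 (Transfer to $\mathbb{C}^m$ and uniformization of the exceptional set).} The essential technical task is to dominate the slice quantities by the global ones uniformly in $z$. The slice disk $\{(z_1,\ldots,\zeta,\ldots,z_m):|\zeta|\leq R\}$ lies in $B(0,\sqrt{r^2-|z_j|^2+R^2})\subset B(0,\alpha r)$ for $R=\alpha_0|z_j|$ and $\alpha_0\leq\alpha$, so $M(R,h)\leq M(\alpha r,g)$; combining the standard $\mathbb{C}^m$ comparison $\log M(r,g)\leq C\,T(\beta r,g)$ with input $(ii)$ yields $T(\alpha_0|z_j|,h)\leq C\,T(\alpha^2 r,f)$. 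Similarly, the zeros and poles of $h$ on the slice disk form a subset of the restriction of the divisor of $g$ to the slice, whose count is bounded by $n_g$ on a slightly larger ball and ultimately by $C\,n_{\partial^{I}f}(\alpha^2 r)$. The logarithmic factor $\log^{\alpha_0}|z_j|\log^{+}n$ grows slower than any power of $r$ and is absorbed by slightly widening $\alpha_0$ to $\alpha$. The main obstacle I anticipate is producing a single exceptional set $E\subset(1,\infty)$ of finite log-measure in the $r$-variable, uniform over all $z$ with $\|z\|=r$ and all coordinate choices: the exceptional set from Theorem 2.A a priori depends on the slice function $h$, hence on the fixed coordinates of $z$. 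Since this dependence is only through the slice characteristic $T(\cdot,h)$, which is uniformly majorized by $T(\alpha^2 r,f)$, a Borel-type covering argument applied to the global majorant $r\mapsto T(\alpha^2 r,f)+n_{\partial^{I}f}(\alpha^2 r)$ should produce the desired uniform $E$; unioning over $j=1,\ldots,m$ completes the proof.
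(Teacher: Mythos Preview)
Your slicing approach is genuinely different from the paper's, which works intrinsically in $\mathbb{C}^m$ via Stoll's Jensen--Poisson formula (Lemma~\ref{l2.1}): the paper differentiates the integral representation of $\log g$ (with $g=\partial^I f$) centred at an arbitrary point $a\notin A=\text{supp}\,\mu_g$, and this yields a pointwise bound on $\partial^J(\partial_{z_i}g/g)$ valid at \emph{every} $z\in\mathbb{C}^m\setminus A$, with no slice-dependent exceptional set (Lemma~\ref{l2.2}). The induction to higher order is also organised differently: rather than telescoping $\partial^{I_n}f/\partial^I f$ as a product of first-order factors $\partial_{z_{j_k}}g_{k-1}/g_{k-1}$ (which forces you to control $n_{g_{k-1}}$ for each $k$), the paper expresses $\partial^{I_q}f/g$ via Fa\`a di Bruno--type identities in terms of products of $\partial^{I_p}f/g$ for $p<q$ and of $\partial^J(\partial_{z_i}g/g)$, so the only counting function that ever appears is $n_g=n_{\partial^I f}$.

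Your proposal has two gaps. The minor one is claim~(i) in Step~1: partial differentiation does \emph{not} create new zeros only near existing zeros/poles of $g_0$ (e.g.\ $g_0=\sin z_1+z_2$ has $\partial_{z_1}g_0=\cos z_1$, whose zero divisor is disjoint from that of $g_0$); this is salvageable by bounding $n_{g_k}^0$ through $T(\alpha r,f)$ via the first main theorem. The serious gap is the one you flag in Step~3 but do not close: Gundersen's exceptional set is \emph{not} determined by the slice characteristic $T(\cdot,h)$ alone---in his proof it is built from small intervals around the moduli $|c_v|$ of the actual zeros and poles of $h$ (and of $h^{(j)}$). Different slices have different zero/pole configurations, hence genuinely different exceptional subsets of $(1,\infty)$, and the union over the uncountable family of slices meeting a sphere $\|z\|=r$ has no reason to have finite logarithmic measure. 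A Borel argument on the global majorant $r\mapsto T(\alpha^2 r,f)+n_{\partial^I f}(\alpha^2 r)$ only controls radii where the majorant jumps; it says nothing about radii that happen to be close to zero/pole moduli of some particular slice. This is exactly the obstruction that the paper's intrinsic $\mathbb{C}^m$ Jensen--Poisson argument avoids.
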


If $f$ is of finite order, then from Theorem \ref{t2.1}, we get the following corollary.
\begin{cor}\label{c2.1} Let $f:\mathbb{C}^m\to \mathbb{P}^1$ be a transcendental meromorphic function of finite order $\rho$ and let $I=(i_1,\ldots,i_m)\in \mathbb{Z}^m_+$ be a multi-index. Also let $I_n=(i_{n1},\ldots,i_{nm})$ such that $|I_n|=|I|+n$, where $n\geq 1$ is an integer. Then there exists a set $E \subset (1, \infty)$ of finite logarithmic measure such that
\beas \left|\frac{\partial^{I_n}(f(z))}{\partial^{I}(f(z))}\right|\leq ||z||^{(|I_n|-|I|)(\rho-1+\varepsilon)},\eeas
for all large values of $||z||$, where $z\in \mathbb{C}^m$ such that $||z||\not\in [0,1]\cup E$ and $\varepsilon>0$ is a given constant.
\end{cor}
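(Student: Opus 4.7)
The approach is to deduce Corollary \ref{c2.1} as a direct consequence of Theorem \ref{t2.1} by specialising the two growth terms on its right-hand side to the finite-order case. Fix $\alpha>1$ and write $r=\|z\|$. Theorem \ref{t2.1} produces an exceptional set $E_0\subset(1,\infty)$ of finite logarithmic measure and a constant $B$ such that
\[
\left|\frac{\partial^{I_n}(f(z))}{\partial^{I}(f(z))}\right|\leq B\left(\frac{T(\alpha^2 r,f)}{r}+\frac{n_{\partial^{I}(f)}(\alpha^2 r)}{r}\right)^{|I_n|-|I|}
\]
for $r\notin[0,1]\cup E_0$. It therefore suffices to bound each of the two summands inside the parentheses by $r^{\rho-1+\varepsilon/2}$ for large $r$ outside a (possibly enlarged) set of finite logarithmic measure; the constant $B$ and the combinatorial factor arising from $(a+b)^{|I_n|-|I|}\leq(2\max\{a,b\})^{|I_n|-|I|}$ can then be absorbed into the additional room $r^{(|I_n|-|I|)\varepsilon/2}$ for $r$ sufficiently large.

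The first term is handled directly from the definition of the order: for any $\delta>0$ one has $T(r,f)\leq r^{\rho+\delta}$ for all large $r$, so $T(\alpha^2r,f)/r$ is bounded by a constant multiple of $r^{\rho-1+\delta}$. For the counting-function term I would first establish that $\partial^{I}(f)$ has order at most $\rho$. This uses the decomposition $\partial^{I}(f)=f\cdot(\partial^{I}(f)/f)$, together with a logarithmic-partial-derivative estimate of the form $m(r,\partial^{I}(f)/f)=S(r,f)$ which is built into the proof of Theorem \ref{t2.1}, and the elementary observation that a pole of $f$ of multiplicity $k$ becomes a pole of $\partial^{I}(f)$ of multiplicity at most $(|I|+1)k$. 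These combine to give $T(r,\partial^{I}(f))\leq(|I|+1)T(r,f)+S(r,f)$, and hence $\rho(\partial^{I}(f))\leq\rho$ outside an exceptional set of finite logarithmic measure. A standard telescoping estimate $n_{\partial^{I}(f)}(\alpha^2 r)\log\alpha\leq N(\alpha^3r,0;\partial^{I}(f))+N(\alpha^3 r,\partial^{I}(f))\leq 2T(\alpha^3r,\partial^{I}(f))+O(1)$ then converts the order bound into $n_{\partial^{I}(f)}(\alpha^2 r)/r\leq r^{\rho-1+\varepsilon/2}$ for large $r$ outside the union of exceptional sets. Substituting both estimates into the inequality from Theorem \ref{t2.1} and absorbing the constants gives the claim.

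The \emph{main obstacle} is precisely the order-preservation step: verifying that in the several-variable setting $\rho(\partial^{I}(f))\leq\rho(f)$. This rests on a lemma on the logarithmic partial derivative of the form $m(r,\partial^{I}(f)/f)=S(r,f)$ outside a set of finite logarithmic measure, which has to be read off from (or integrated out of) the pointwise estimate underlying Theorem \ref{t2.1}. Once this is in hand, the conversion between $n$ and $N$, the enlargement of the exceptional set to one of still-finite logarithmic measure, and the absorption of constants at the end are routine.
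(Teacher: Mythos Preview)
Your proposal is correct and follows essentially the same route as the paper: start from Theorem~\ref{t2.1}, control the counting-function term by first showing $T(r,\partial^{I}(f))\leq(|I|+1)T(r,f)+S(r,f)$ via the logarithmic partial derivative lemma (Lemma~\ref{l2.3}) and the pole-multiplicity bound, then convert $n_{\partial^{I}(f)}$ to $T$ by the telescoping inequality $n(\alpha^2 r)\log\alpha\leq N(\alpha^3 r)$ (the paper packages these two steps as Lemma~\ref{l2.4}), and finally absorb constants using the finite order. The only difference is that the paper inserts an extra application of Hinkkanen's Borel-type lemma (Lemma~\ref{l3.1}) to pass from $T(\alpha^3 r,f)$ to $T(r,f)$ before invoking the order bound; your approach of keeping $\alpha$ fixed and bounding $T(\alpha^3 r,f)\leq(\alpha^3 r)^{\rho+\varepsilon/2}$ directly is simpler and equally valid, since $\alpha^{3(\rho+\varepsilon/2)}$ is just a constant that can be absorbed at the end.
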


\subsection {{\bf Auxiliary lemmas}}
In the proof of Theorem \ref{t2.1}, we use the following key lemmas.

\smallskip
Here we assume that $W$ is a hermitian vector space of dimension $m>1$. Define the holomorphic function $L:\mathbb{C}(1)\rightarrow \mathbb{C}$ by
\bea\label{sm1} L(z)=\frac{1}{(m-1)!}\frac{d^{m-1}}{dz^{m-1}}(z^{m-1}\log(1-z))=D_{m-1}\log(1-z),\eea
for $z\in \mathbb{C}(1)$, where $\log$ means the principal value of the logarithm.

Now we recall the following Jensen-Poisson Formula which was proved by W. Sto1l (see. \cite[Theorem 1.7]{WS4} or \cite[Theorem 5.2]{WS2}).
\begin{lem}\label{l2.1} Take $0<R\leq +\infty$. Let $f$ be a meromorphic function on $W(R)$ which is holomorphic at $0\in W(R)$ with $f(0)\neq 0$. Define $\nu=\mu_f$ and $A=\text{supp}\; \mu_f$. Assume $A\neq \varnothing$. Let $s$ be maximal with $0<s<R$ such that $A(s)=\varnothing$. Take $r\in \mathbb{R}[s,R)$. Then holomorphic functions $F$ and $E$ on $B(r)$ and defined by
\beas\label{sm2} \displaystyle F(\delta)&=&\int_{S(r)}\log |f(\eta)|\left[ \frac{r^{2m}}{(r^2-(\delta|\eta))^m}-1\right]\;\sigma(\eta),\\
\label{sm2.1} \displaystyle E(\delta)&=& \int_{A\cap B(r)}\nu(\eta)L\left(\frac{(\delta|\eta)}{r^2}\right)\;\omega^{m-1}(\eta),\eeas
for all $\delta\in B(r)$. Also a holomorphic function $H$ on $B(s)$ is defined by
\beas\label{sm3} \displaystyle H(\delta)=\int_{A\cap B(r)}\nu(\eta)L\left(\frac{(\delta|\eta)}{(\eta |\eta)}\right)\;\omega^{m-1}(\eta).\eeas
Moreover for $\delta\in B(s)$, we have
\[\displaystyle \log f(\delta)/f(0)=F(\delta)+H(\delta)-E(\delta).\]
\end{lem}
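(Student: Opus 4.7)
The plan is to establish the identity in three phases: first verify that $F$, $E$, $H$ are holomorphic on the indicated domains; next reduce the holomorphic identity to the real-part identity $\log|f(\delta)/f(0)| = \mathrm{Re}(F(\delta)+H(\delta)-E(\delta))$ on $B(s)$; and finally prove this real-part identity by slicing along complex lines through the origin, applying the classical one-variable Poisson--Jensen formula, and averaging against the rotation-invariant measure $\sigma$ on the unit sphere $S(1)$.

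For the holomorphy claims, Cauchy--Schwarz gives $|(\delta|\eta)|\leq \|\delta\|\,r < r^2$ whenever $\delta\in B(r)$ and $\eta\in S(r)$, so the Szeg\H{o}-type kernel $r^{2m}/(r^2-(\delta|\eta))^m$ is holomorphic in $\delta$ on $B(r)$, and since $L$ is holomorphic on the unit disk $\mathbb{C}(1)$ by its defining formula \eqref{sm1}, the integrand for $E$ is likewise holomorphic in $\delta$; uniform bounds on the compact sphere $S(r)$ justify differentiation under the integral. For $H$, the hypothesis $A(s)=\varnothing$ forces $\|\eta\|\geq s$ on $A\cap B(r)$, giving $|(\delta|\eta)/(\eta|\eta)|<1$ for $\delta\in B(s)$. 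Now define $\Phi(\delta):=F(\delta)+H(\delta)-E(\delta)-\log(f(\delta)/f(0))$ on $B(s)$; this is holomorphic because $f$ has no zeros or poles on $B(s)$ (as $A(s)=\varnothing$) and the ball is simply connected. Evaluating at the origin, $F(0)=0$ from the explicit subtraction of $1$, while a Leibniz computation in \eqref{sm1} applied at $z=0$ shows $L(0)=0$, forcing $E(0)=H(0)=0$; hence $\Phi(0)=0$, and it suffices to prove $\mathrm{Re}(\Phi)\equiv 0$ on $B(s)$, since a holomorphic function with vanishing real part that vanishes at a point is identically zero.

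To establish the real-part identity, follow Stoll's slicing strategy: for each unit vector $\zeta\in S(1)$ restrict $f$ to the complex line through the origin in direction $\zeta$, obtaining a one-variable meromorphic function $f_\zeta(w):=f(w\zeta)$ on $|w|<R$. Applying the classical Poisson--Jensen formula to $f_\zeta$ on the disk $|w|<r$, then integrating both sides over $\zeta\in S(1)$ against $\sigma$, converts the one-variable boundary integral into $\mathrm{Re}(F(\delta))$ once one identifies the averaged Poisson kernel with $\mathrm{Re}[r^{2m}/(r^2-(\delta|\eta))^m]$, the real part of the Cauchy--Szeg\H{o} kernel for the ball. The Jensen zero/pole sums on each line become, upon fiber integration over $A$ via Wirtinger's formula, the contributions $\mathrm{Re}(H(\delta)-E(\delta))$; the specific kernel $L = D_{m-1}\log(1-z)$ is forced by the requirement that $(m-1)$-fold differentiation of $\log(1-z)$ absorb the combinatorial factor produced by the $(m-1)$-dimensional spherical averaging.

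The chief obstacle is precisely this last averaging step: matching the line-by-line one-variable Blaschke kernel $\log|(r^2-\bar a w)/(r(a-w))|$ against the multi-variable quantity $\mathrm{Re}[L((\delta|\eta)/r^2) - L((\delta|\eta)/(\eta|\eta))]$ integrated with respect to $\nu\,\omega^{m-1}$ over the pure $(m-1)$-dimensional divisor $A$ requires careful application of Wirtinger's formula to interchange the spherical average with the fiber integration, and the precise form of $L$ in \eqref{sm1} is dictated entirely by the need to absorb the factor $1/(m-1)!$ arising from this interchange. The explicit bookkeeping, including the decomposition of the singular contribution into the pieces supported on $S(r)$ and on the interior divisor, is carried out in Stoll \cite{WS4,WS2}, whose derivation we follow.
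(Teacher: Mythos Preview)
The paper does not prove this lemma at all; it simply records it as the Jensen--Poisson formula of Stoll and cites \cite{WS4,WS2}. Your proposal likewise defers the hard computation to those references, so at the level of what is actually being claimed, you and the paper are doing the same thing.

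That said, the slicing description in your third paragraph is not quite right and would not work as written. You propose to apply the one-variable Poisson--Jensen formula to $f_\zeta(w)=f(w\zeta)$ on $|w|<r$ and then integrate over $\zeta\in S(1)$. But for a \emph{fixed} target point $\delta\in B(s)$, the equation $w\zeta=\delta$ determines a single line (namely $\zeta=\delta/\|\delta\|$, $w=\|\delta\|$); averaging over all $\zeta$ with a fixed $w$ produces the spherical mean $\int_{S(1)}\log|f(w\zeta)|\,\sigma(\zeta)$, not the pointwise value $\log|f(\delta)|$. So the averaged left-hand side is not what you want, and the claimed identification of the averaged one-variable Poisson kernel with $\mathrm{Re}\,[r^{2m}/(r^2-(\delta|\eta))^m]$ does not follow from this manoeuvre. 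Stoll's actual argument obtains the boundary term as the genuine Poisson integral on the ball (established directly, not by line-averaging), and identifies $\mathrm{Re}(H-E)$ via the Green potential of the divisor current; the role of $L$ and the factor $1/(m-1)!$ is fixed by matching singularities along $A$, not by the spherical averaging you describe. Your holomorphy checks and the reduction to real parts are correct, and since you ultimately point to \cite{WS4,WS2} for the bookkeeping---exactly as the paper does---the proposal is serviceable, but the heuristic you give for how the pieces assemble should be corrected.
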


\begin{lem}\label{l2.2}
Let $f:\mathbb{C}^m\to \mathbb{P}^1$ be a transcendental meromorphic function and let $\alpha>1$ be a real constant. Suppose $g=\partial^{I}(f)$, where $I=(i_1,\ldots,i_m)\in \mathbb{Z}^m_+$ is a multi-index. Define $A=\text{supp}\; \mu_g$. Let $I_n=(i_{n1},\ldots,i_{nm})$ such that $|I_n|=|I|+n$, where $n\geq 1$ is an integer. Then
\beas \left|\frac{\partial^{I_n}(f(z))}{g(z)}\right|\leq B\left(\frac{T(\alpha^2 r,g)}{r}+\frac{n_{g}(\alpha^2 r)}{r}\right)^{|I_n|-|I|}\eeas
holds for all $z\in\mathbb{C}^m\backslash A$, where $||z||=r$, $n_{g}(r)=n_{\mu_{g}^0}(r)+n_{\mu_{g}^{\infty}}(r)$ and the constant $B$ depends only on $\alpha$, $n$ and $|I|$.
\end{lem}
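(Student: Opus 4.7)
The plan is to adapt Gundersen's one-variable argument from \cite{GG2} to the several-variable setting, with Stoll's Jensen–Poisson formula (Lemma \ref{l2.1}) playing the role of the classical Poisson–Jensen formula. Since mixed partial derivatives commute, $\partial^{I_n}f = \partial^{J}g$ where $J := I_n - I$ is a multi-index with $|J|=n$, so the problem reduces to bounding $|\partial^{J}g(z)/g(z)|$. The first step is to express this quantity in terms of the logarithmic partial derivatives $\partial^{K}(\log g)$ with $1\le|K|\le n$ via the multivariate Faà di Bruno formula applied to $g=\exp(\log g)$:
\[
\frac{\partial^{J}g}{g} \;=\; \sum_{\pi} c_{\pi}\prod_{K\in\pi}\partial^{K}(\log g),
\]
the sum running over set partitions $\pi$ of $J$ into nonzero multi-indices $K$, each summand being a product of total differential order exactly $|J|=n$. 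It therefore suffices to prove the pointwise bound
\[
|\partial^{K}(\log g)(z)| \;\le\; C\cdot\frac{T(\alpha^{2}r,g)+n_{g}(\alpha^{2}r)}{r^{|K|}}
\]
for each $K$ with $1\le|K|\le n$, where $C$ depends only on $m,\alpha,|K|$; multiplying these factors inside the Faà di Bruno expansion then reproduces the required exponent $|I_n|-|I|$.

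To prove this individual logarithmic-derivative bound, apply Lemma \ref{l2.1} to $g$ with ambient radius $R=\alpha^{2}r$ (after, if necessary, a preliminary translation so that $g$ is holomorphic and nonzero at the origin, whose effect is absorbed into the constant). This yields the identity
\[
\log\bigl(g(\delta)/g(0)\bigr) \;=\; F(\delta)+H(\delta)-E(\delta)
\]
on the inner ball $B(s)$. Applying $\partial^{K}$ for $|K|\ge 1$ removes the multivaluedness of the logarithm, so both sides continue analytically to single-valued meromorphic functions on $B(R)\setminus A$, and the derivative identity $\partial^{K}(\log g)=\partial^{K}F+\partial^{K}H-\partial^{K}E$ is valid at our target point $z$. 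Differentiating under the integral, $F$ and $E$ yield kernels
\[
c_{m,K}\,\frac{\bar\eta^{K}}{(R^{2}-(\delta\vline\eta))^{m+|K|}} \quad\text{and}\quad \frac{\bar\eta^{K}}{R^{2|K|}}\,L^{(|K|)}\!\Bigl(\frac{(\delta\vline\eta)}{R^{2}}\Bigr),
\]
respectively. With $R=\alpha^{2}r$ and $\|\delta\|=r$, on $|\eta|=R$ the denominator $|R^{2}-(z\vline\eta)|$ is bounded below by $R^{2}(1-\alpha^{-2})$, and $(z\vline\eta)/R^{2}$ stays inside the disc of radius $\alpha^{-2}<1$ on which $L$ and all its derivatives are uniformly bounded. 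Together with the first main theorem (\ref{la}) (used to absorb $m(r,1/g)$), these elementary bounds give $|\partial^{K}F(z)|\lesssim T(\alpha^{2}r,g)/r^{|K|}$ and $|\partial^{K}E(z)|\lesssim n_{g}(\alpha^{2}r)/r^{|K|}$.

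The main technical obstacle is controlling $\partial^{K}H(z)$, because the integral formula for $H$ in Lemma \ref{l2.1} is valid only for $\|\delta\|<s$, while our target point $z$ may satisfy $\|z\|>s$. I would handle this either by analytically continuing $H$ from $B(s)$ to $z$ along a path in $B(R)\setminus A$ and exploiting the extended identity together with a local Cauchy-type representation of $\partial^{K}(\log g)$ on a small polydisc around $z$ disjoint from $A$, or alternatively by re-centering Lemma \ref{l2.1} at a regular base point close to $z$ (the origin shift contributes only a bounded correction since the new radius is comparable to $R$). Either route leads to a bound $|\partial^{K}H(z)|\lesssim n_{g}(\alpha^{2}r)/r^{|K|}$ of the same shape as the one for $\partial^{K}E$. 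Summing the three contributions gives the pointwise estimate for $|\partial^{K}(\log g)(z)|$, and substituting back into the Faà di Bruno expansion yields the claimed estimate with constant $B$ depending only on $\alpha$, $n$ and $|I|$.
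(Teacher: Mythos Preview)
Your strategy coincides with the paper's: reduce $\partial^{J}g/g$ to products of logarithmic derivatives $\partial^{K}(\log g)$ (the paper does this by explicit induction rather than invoking Fa\`a di Bruno by name, see (\ref{eyy22})--(\ref{eyy26})), and bound each $\partial^{K}(\log g)$ by differentiating the Jensen--Poisson decomposition $\log g=F+H-E$ and estimating the three integrals separately. You have also correctly isolated the only genuine obstacle, the limited domain of validity of $H$.

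The paper's resolution is more direct than either route you sketch: it applies Lemma~\ref{l2.1} centered not at the origin but \emph{at the target point $a$ itself}, with outer radius $\alpha s$ where $s=\text{dist}(a,A)$. Then the representation $\log g=F+H-E$ is valid on $\ol B(a,s/2)\ni a$, so $\partial^{K}H(a)$ is evaluated directly and bounded by the same Cauchy-estimate computation as $\partial^{K}E$; no continuation is needed. Your second suggestion (re-center near $z$) is essentially this idea, but one should center \emph{exactly} at $z=a$, and the outer radius is $\alpha s$ dictated by the distance to the divisor, not something comparable to $\alpha^{2}r$. The remaining step is then to convert the resulting bounds $T(\alpha s,g)/s^{|K|}$ and $n_{g}(\alpha s)/s^{|K|}$ into the claimed $r$-based quantities, which the paper does by writing $s=\theta r$ and using monotonicity together with $\alpha>\max\{\theta,1/\theta\}$. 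Your analytic-continuation alternative is more fragile than you indicate: the kernel $L\bigl((\delta\vline\eta)/(\eta\vline\eta)\bigr)$ carries a logarithmic singularity along the locus $(\delta\vline\eta)=(\eta\vline\eta)$, which can intersect the domain of integration once $\|\delta\|\ge s$, so the extension of $\partial^{K}H$ to the target point is not automatic.
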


\begin{proof} Define $\nu=\mu_{g}$. Assume that $A\neq \varnothing$.
Let $a=(a_1,a_2,\ldots,a_m)\in\mathbb{C}^m\backslash A$ such that $||a||=r>0$. For the sake of simplicity, we may assume that $g(a)=1$.
Let $s>0$ be maximal such that $\ol B(a,s)\cap A=\varnothing$. Suppose
\bea\label{eyy2} \displaystyle F(z)=\int_{S(a,\alpha s)}\log |g(\eta-a)|\;\left[\frac{(\alpha s)^{2m}}{((\alpha s)^2-(z-a|\eta-a))^m}-1\right]\;\sigma(\eta-a),\eea

\bea\label{eyy3} \displaystyle E(z)=\int_{A\cap B(a,\alpha s)}\nu(\eta)\;L\left(\frac{(z-a|\eta-a)}{(\alpha s)^2}\right)\;\omega^{m-1}(\eta-a)\eea
and
\bea\label{eyy4} \displaystyle H(z)=\int_{A\cap B(a,\alpha s)}\nu(\eta )\;L\left(\frac{(z-a|\eta-a)}{(\eta-a|\eta-a)}\right)\;\omega^{m-1}(\eta-a),\eea
where $z\in \ol B(a,s/2)$. Now by Lemma \ref{l2.1}, we have $\log g(z)=F(z)+H(z)-E(z)$ and so
\bea\label{eyy4a} \displaystyle \frac{\partial_{z_i}(g(z))}{g(z)}=\partial_{z_i}(F(z))+\partial_{z_i}(H(z))-\partial_{z_i}(E(z)),\eea
for all $z\in \ol B(a,s/2)$, where $i\in\mathbb{Z}[1,m]$. Let $J=(j_1,\ldots,j_m)\in\mathbb{Z}^m_+$ be a multi-index such that where $|J|=n-1$. Applying the operator $\partial^{J}$ on (\ref{eyy4a}), we get
\bea\label{eyy6} \partial^{J}\left(\frac{\partial_{z_i}(g(z))}{g(z)}\right)=\partial^{J}\big(\partial_{z_i}(F(z))\big)+\partial^{J}\big(\partial_{z_i}(H(z))\big)-\partial^{J}\big(\partial_{z_i}(E(z))\big),\eea
for all $z\in \ol B(a,s/2)$.

\medskip
Let $z=(z_1,z_2,\ldots, z_m)\in \ol B(a,s/2)$ and $\eta=(\eta_1,\eta_2,\ldots,\eta_m)\in S(a,\alpha s)$. By Schwarz inequality, we have $|(z-a|\eta-a)|\leq ||z-a||||\eta-a||\leq \alpha s^2/2$ and so
\bea\label{sm.1}\displaystyle |(\alpha s)^2-(z-a|\eta-a)|\geq (\alpha s)^2-|(z-a|\eta-a)|\geq\alpha (\alpha-\frac{1}{2})s^2.\eea

\medskip
It is easy to verify that
\bea\label{sm.2} \displaystyle |\ol {\eta_i-a_i}|\leq \sqrt{|\eta_1-a_1|^2+\ldots+|\eta_m-a_m|^2}<\alpha s.\eea

\medskip
Using (\ref{sm.1}) and (\ref{sm.2}), we get
\bea\label{sm.3}
\displaystyle \left| \frac{(\alpha s)^{2m}}{((\alpha s)^2-(z-a|\eta-a))^{m+n}}\;\ol {\eta_1-a_1}^{j_1}\cdots \ol { \eta_m-a_m}^{j_m}\ol {\eta_i-a_i}\right|\leq \frac{2^{m+n}\alpha^m}{(2\alpha-1)^{m+n}s^n},\eea
for all $z=(z_1,z_2,\ldots, z_m)\in \ol B(a,s/2)$ and $\eta=(\eta_1,\eta_2,\ldots,\eta_m)\in S(a,\alpha s)$.
Now (\ref{eyy2}) yields
\bea\label{sm.4}&&\displaystyle\partial^{J}\left(\partial_{z_i}(F(z))\right)\\&=&\frac{(m+n-1)!}{(m-1)!}\times\nonumber\\&&\int_{S(a,\alpha s)}\log |g(\eta-a)|\frac{(\alpha s)^{2m}}{((\alpha s)^2-(z-a|\eta-a))^{m+n}}\ol {\eta_1-a_1}^{j_1}\cdots \ol {\eta_m-a_m}^{j_m}\ol {\eta_i-a_i}\sigma(\eta-a),\nonumber\eea
for $z\in \ol B(a,s/2)$. Therefore using (\ref{sm.3}) to (\ref{sm.4}), we get
\bea\label{sm.5}\displaystyle \left|\partial^{J}\left(\partial_{z_i}(F(z))\right)\right|&\leq& \frac{(m+n-1)!2^{m+n}\alpha^m}{(m-1)!(2\alpha-1)^{m+n}s^n}\int_{S(a,\alpha s)}
\left|\log |g(\eta-a)|\right|\;\sigma(\eta-a)\\&=&
\frac{(m+n-1)!2^{m+n}\alpha^m}{(m-1)!(2\alpha-1)^{m+n}s^n}\int_{S(\alpha s)}\left|\log |g(\zeta)|\right|\;\sigma(\zeta)\nonumber\\&=&
\frac{(m+n-1)!2^{m+n}\alpha^m}{(m-1)!(2\alpha-1)^{m+n}s^n}\int_{S(\alpha s)}\left(\log^+ |g(\zeta)|+\log^+\frac{1}{|g(\zeta)|}\right)\;\sigma(\zeta)\nonumber\\&=&
\frac{(m+n-1)!2^{m+n}\alpha^m}{(m-1)!(2\alpha-1)^{m+n}s^n}\left(m\left(\alpha s,g\right)+m\left(\alpha s, \frac{1}{g}\right)\right)\nonumber\\&\leq&
\frac{(m+n-1)!2^{m+n}\alpha^m}{(m-1)!(2\alpha-1)^{m+n}s^n}\left(T\left(\alpha s,g\right)+T\left(\alpha s, \frac{1}{g}\right)\right),\nonumber\eea
for all $z\in \ol B(a,s/2)$. Now from (\ref{la}) and (\ref{sm.5}), we get
\bea\label{sm.6}\displaystyle \left|\partial^{J}\left(\partial_{z_i}(F(z))\right)\right|\leq \frac{2^{m+n+1}(m+n-1)!\alpha^m}{(m-1)!(2\alpha-1)^{m+n}s^n}T\left(\alpha s,g\right),\eea
for all $z\in \ol B(a,s/2)$.

\medskip
Let
\[h(w)=w^{m-1}\log (1-w),\]
where $w=\frac{(z-a|\eta-a)}{(\alpha s)^2}$. If $z=(z_1,z_2,\ldots, z_m)\in \ol B(a,s/2)$ and $\eta\in A\cap B(a,\alpha s)$, then
\[\displaystyle |(z-a|\eta-a)|\leq ||z-a||||\eta-a||\leq \alpha s^2/2\]
and so $|w|\leq\frac{1}{2\alpha}<\frac{1}{2}$. Clearly $h(w)$ is holomorphic in $|w|< \frac{1}{2}$ and so
\beas \displaystyle |h(w)|=|w|^{m-1}\Big|w+\frac{w^2}{2}+\frac{w^3}{3}+\ldots\Big|
<\frac{1}{2^m}\left(1+\frac{1}{2}+\frac{1}{2^2}+\ldots\right)=\frac{1}{2^{m-1}}.\eeas

\medskip
It follows from Cauchy Estimates that
\bea\label{sm.7} \left|\frac{d^n}{dw^n}L\left(\frac{(z-a|\eta-a)}{(\alpha s)^2}\right)\right|=\frac{1}{(m-1)!}\left|\frac{d^{m+n-1}}{dw^{m+n-1}}h(w)\right|
\nonumber&\leq& \frac{(m+n-1)!\sup\limits_{|w|\leq \frac{1}{2}}|h(w)|}{(m-1)!\frac{1}{2^{m+n-1}}}\\&=&\frac{(m+n-1)!2^n}{(m-1)!}.\eea

\medskip
From (\ref{sm.2}), we have
\bea\label{sm.8} \displaystyle \left|\frac{\ol {\eta_1-a_1}^{j_1}\cdots \ol {\eta_m-a_m}^{j_m}\ol {\eta_i-a_i}}{(\alpha s)^{2n}}\right|\leq \frac{1}{(\alpha s)^n}.\eea

\medskip
Now from (\ref{eyy3}), we get
\bea\label{sm.9}\displaystyle\partial^{J}\left(\partial_{z_i}(E(z))\right)=\int_{A\cap B(a,\alpha s)}\nu(\eta) \frac{d^n L(w)}{dw^n}\frac{\ol {\eta_1-a_1}^{j_1}\cdots \ol {\eta_m-a_m}^{j_m}\ol {\eta_i-a_j}}{(\alpha s)^{2n}}\omega^{m-1}(\eta-a)\eea
for all $z\in \ol B(a,s/2)$. Therefore using (\ref{sm.7}) and (\ref{sm.8}) to (\ref{sm.9}), we get
\bea\label{sm.10}\displaystyle \left|\partial^{J}\left(\partial_{z_i}(E(z))\right)\right|&\leq& \frac{(m+n-1)!2^n}{(m-1)!(\alpha s)^n}
\int_{A\cap B(a,\alpha s)}|\nu(\eta)|\;\omega^{m-1}(\eta-a)
\\&\leq& \frac{(m+n-1)!2^n}{(m-1)!(\alpha s)^n}\int_{A\cap B(\alpha s)}|\nu(\zeta+a)|\;\omega^{m-1}(\zeta)\nonumber
,\eea
for all $z\in \ol B(a,s/2)$. Note that
\[\displaystyle |\nu(\zeta+a)|\leq \mu_g^0(\zeta+a)+\mu_g^{\infty}(\zeta+a).\]

\smallskip
Then from (\ref{sm.10}), we get
\bea\label{sm.11}\displaystyle \left|\partial^{J}\left(\partial_{z_i}(E(z))\right)\right|&\leq& \frac{(m+n-1)!2^n}{(m-1)!(\alpha s)^n}
\int_{A\cap B(\alpha s)}\mu_g^0(\zeta+a)\;\omega^{m-1}(\zeta)\\&&+\frac{(m+n-1)!2^n}{(m-1)!(\alpha s)^n}
\int_{A\cap B(\alpha s)}\mu_g^{\infty}(\zeta+a)\;\omega^{m-1}(\zeta),\nonumber\eea
for all $z\in \ol B(a,s/2)$. We know that (see \cite[pp. 10]{WS1})
\bea\label{sm.12} \displaystyle n_{\mu_g^c}(\alpha s)=\int_{A\cap B(\alpha s)}\mu_g^c(\zeta+a)\;\omega^{m-1}(\zeta)+\mu_g^c(a),\eea
where $c=0$ or $\infty$. Since $g(a)=1$, using (\ref{sm.12}) to (\ref{sm.11}), we get
\bea\label{eyy16}\displaystyle \left|\partial^{J}\left(\partial_{z_i}(E(z))\right)\right|\leq \frac{(m+n-1)!2^n}{(m-1)!(\alpha s)^n}
n_{g}(\alpha s),\eea
for all $z\in \ol B(a,s/2)$.

\medskip
Let
\[\displaystyle h(\tilde w)=\tilde w^{m-1}\log (1-\tilde w),\]
where $\tilde w=\frac{(z-a|\eta-a)}{(\eta-a|\eta-a)}$. Suppose $\eta\in A\cap B(a,\alpha s)$. Since $\alpha>1$ and $A\cap B(a,s)=\varnothing$, we have $||\eta-a||\geq s$. Then by Schwarz inequality, we get $|\tilde w|\leq \frac{1}{2}$ and so $h(\tilde w)$ is holomorphic in $|\tilde w|\leq \frac{1}{2}$. Clearly
$|h(\tilde w)|\leq \frac{1}{2^{m-1}}.$
It follows from Cauchy Estimates that
\bea\label{eyy17} \displaystyle\left|\frac{d^n}{d\tilde w^n}L\left(\frac{(z-a|\eta-a)}{(\eta-a|\eta-a)}\right)\right|=\frac{1}{(m-1)!}\left|\frac{d^{m+n-1}}{d\tilde w^{m+n-1}}h(\tilde w)\right|\leq \frac{(m+n-1)!2^n}{(m-1)!}.\eea

\medskip
Also it is easy to verify that
\bea\label{eyy18}\displaystyle\left|\frac{\ol {\eta_1-a_1}^{j_1}\cdots \ol {\eta_m-a_m}^{j_m}\ol {\eta_i-a_i}}{||\eta-a||^{2n}}\right|\leq\frac{||\eta-a||^n}{||\eta-a||^{2n}}\leq \frac{1}{s^n}\eea
for all $z\in \ol B(a,s/2)$. On the other hand from (\ref{eyy4}), we get
\bea\label{eyy19}\displaystyle\partial^{J}\left(\partial_{z_i}(H(z))\right)=\int_{A\cap B(a,\alpha s)}\nu(\eta) \frac{d^n L(\tilde w)}{d\tilde w^n}\frac{\ol {\eta_1-a_1}^{j_1}\cdots \ol {\eta_m-a_m}^{j_m}\ol {\eta_i-a_i}}{||\eta-a||^{2n}}\omega^{m-1}(\eta)\eea
for all $z\in \ol B(a,s/2)$. Therefore using (\ref{eyy17}) and (\ref{eyy18}) to (\ref{eyy19}), we get
\beas\displaystyle \left|\partial^{J}\left(\partial_{z_i}(H(z))\right)\right|&\leq& \frac{(m+n-1)!2^n}{(m-1)!s^n}
\int_{A\cap B(a,\alpha s)}|\nu(\eta)|\;\omega^{m-1}(\eta-a)\\&\leq&
\frac{(m+n-1)!2^n}{(m-1)!s^n}\int_{A\cap B(\alpha s)}\mu_g^0(\zeta+a)\;\omega^{m-1}(\zeta)\\&&
+\frac{(m+n-1)!2^n}{(m-1)!s^n}
\int_{A\cap B(\alpha s)}\mu_g^{\infty}(\zeta+a)\;\omega^{m-1}(\zeta)
,\eeas
for all $z\in \ol B(a,s/2)$ and so from (\ref{sm.12}), we get
\bea\label{eyy20}\displaystyle \left|\partial^{J}\left(\partial_{z_i}(H(z))\right)\right|\leq \frac{(m+n-1)!2^n}{(m-1)!s^n}n_{g}(\alpha s),\eea
for all $z\in \ol B(a,s/2)$.

\medskip
Now (\ref{sm.6}), (\ref{eyy16}) and (\ref{eyy20}) into (\ref{eyy6}), we get
\bea\label{eyy21a} \left|\partial^{J}\left(\frac{\partial_{z_i}(g(z))}{g(z)}\right)\right|&\leq& \frac{2^{m+n+1}(m+n-1)!\alpha^m}{(m-1)!(2\alpha-1)^{m+n}s^n}T(\alpha s,g)+\frac{(m+n-1)!2^n(\alpha^n+1)}{(m-1)!\alpha^n s^n}n_{g}(\alpha s)\nonumber\\&\leq&
\frac{2^{m+n+1}(m+n-1)!\alpha^{m}}{(m-1)!(2\alpha-1)^{m+n}}\left(\frac{T(\alpha s,g)}{s^n}+\frac{n_{g}(\alpha s)}{s^n}\right)
\eea
for all $z\in \ol B(a,s/2)$.

Let us choose $\theta>0$ such that $s=\theta r$. If $\theta\neq 1$,  we take $\alpha>\max\{\theta,1/\theta\}$. Clearly $\alpha>1$ and $1/\theta^n<\alpha^n$. Then from (\ref{eyy21a}), we get
\beas \left|\partial^{J}\left(\frac{\partial_{z_i}(g(a))}{g(a)}\right)\right|&\leq&
\frac{2^{m+n+1}(m+n-1)!\alpha^{m}}{(m-1)!\theta^n (2\alpha-1)^{m+n}}\left(\frac{T(\alpha^2 r,g)}{r^n}+\frac{n_{g}(\alpha^2 r)}{r^n}\right)\\&\leq&
\frac{2^{m+n+1}(m+n-1)!\alpha^{m}}{(m-1)!\theta^n \alpha^{m+n}(2-1/\alpha)^{m+n}}\left(\frac{T(\alpha^2 r,g)}{r^n}+\frac{n_{g}(\alpha^2 r)}{r^n}\right)\\&\leq&
\frac{2^{m+n+1}(m+n-1)!}{(m-1)!}\left(\frac{T(\alpha^2 r,g)}{r^n}+\frac{n_{g}(\alpha^2 r)}{r^n}\right).
\eeas

If $\theta=1$, the above inequality also holds. Since $a\in\mathbb{C}^m\backslash A$ was arbitrary, it follows that
\bea\label{eyy21} \left|\partial^{J}\left(\frac{\partial_{z_i}(g(z))}{g(z)}\right)\right|\leq
\frac{2^{m+n+1}(m+n-1)!}{(m-1)!}\left(\frac{T(\alpha^2 r,g)}{r^n}+\frac{n_{g}(\alpha^2 r)}{r^n}\right),
\eea
for all $z\in\mathbb{C}^m\backslash A$.

\medskip
If $|J|=0$, then from (\ref{eyy21}), we get
\bea\label{eyy22} \left|\frac{\partial_{z_i}(g(z))}{g(z)}\right|\leq A_{i_1}\left(\frac{T(\alpha^2 r,g)}{r}+\frac{n_{g}(\alpha^2 r)}{r}\right)
,\eea
for all $z\in\mathbb{C}^m\backslash A$ and any $i\in\mathbb{Z}[1,m]$, where $A_{i_1}>0$ is a constant that depends only on $n$.

\smallskip
Let $I_1=(i_{11},\ldots,i_{1i},\ldots,i_{1m})=(i_1,\ldots, i_i+1,\ldots,i_m)$. Clearly $|I_1|=|I|+1$ and so from (\ref{eyy22}), we have
\beas \left|\frac{\partial^{I_1}(f(z))}{\partial^I(f(z))}\right|\leq A_{I_1}\left(\frac{T(\alpha^2 r,g)}{r}+\frac{n_{g}(\alpha^2 r)}{r}\right)^{|I_1|-|I|},\eeas
for all $z\in\mathbb{C}^m\backslash A$, where $A_{i_1}>0$ is a constant.

\smallskip
Suppose $|J|=1$. Let $\partial^J(g)=\partial_{z_j}(g)$ for any $j\in\mathbb{Z}[1,m]$. For the sake of simplicity, we assume that $i\leq j$. Let $I_2=(i_{21},\ldots,i_{2i},\ldots, i_{2j},\ldots,i_{2m})=(i_1,\ldots, i_i+1,\ldots,i_j+1,\ldots,i_m)$. Clearly $|I_2|=|I|+2$ and $\partial^{|I_2|}(f(z))=\partial_{z_j}(\partial_{z_i}(g(z)))$.
Note that
\bea\label{eyy23} \partial_{z_j}\left(\frac{\partial_{z_i}(g)}{g}\right)=\frac{\partial_{z_j}(\partial_{z_i}(g))}{g}-\frac{\partial_{z_j}(g)}{g}\frac{\partial_{z_i}(g)}{g}.\eea

\smallskip
Now from (\ref{eyy21})-(\ref{eyy23}), we deduce that
\bea\label{eyy24} \left|\frac{\partial^{I_2}(f(z))}{\partial^I(f(z))}\right|=\left|\frac{\partial_{z_j}(\partial_{z_i}(g(z)))}{g(z)}\right|&\leq&
\left|\frac{\partial_{z_j}(g(z))}{g(z)}\right|\times \left|\frac{\partial_{z_i}(g(z))}{g(z)}\right|+\left|\partial_{z_j}\left(\frac{\partial_{z_i}(g(z))}{g(z)}\right)\right|\nonumber\\
&\leq& A_{I_2}\left(\frac{T(\alpha^2 r,g)}{r}+\frac{n_{g}(\alpha^2 r)}{r}\right)^{|I_2|-|I|},\eea
for all $z\in\mathbb{C}^m\backslash A$, where $A_{I_2}>0$ is a constant.

Suppose $|J|=2$. Let $\partial^J(g)=\partial_{z_k}(\partial_{z_l}(g))$ for any $k,l\in\mathbb{Z}[1,m]$. Assume that $i\leq k\leq l$. Let
$I_3=(i_{31},\ldots,i_{3i},\ldots,i_{3k},\ldots,i_{3l},\ldots,i_{3m})=(i_1,\ldots, i_i+1,\ldots,i_k+1,\ldots,i_l+1,\ldots,i_m).$
Clearly $|I_3|=|I|+3$ and $\partial^{|I_3|}(f(z))=\partial_{z_l}(\partial_{z_k}(\partial_{z_i}(g(z))))$.
Also we see that
\bea\label{eyy25} \partial_{z_l}\left(\partial_{z_k}\left(\frac{\partial_{z_i}(g)}{g}\right)\right)&=&\frac{\partial_{z_l}(\partial_{z_k}(\partial_{z_i}(g)))}{g}-\frac{\partial_{z_l}(g)}{g}\frac{\partial_{z_k}(\partial_{z_i}(g))}{g}-\frac{\partial_{z_i}(g)}{g}\frac{\partial_{z_l}(\partial_{z_k}(g))}{g}\nonumber\\&&-\frac{\partial_{z_k}(g)}{g}\frac{\partial_{z_l}(\partial_{z_i}(g))}{g}+2\frac{\partial_{z_i}(g)}{g}\frac{\partial_{z_k}(g)}{g}\frac{\partial_{z_l}(g)}{g}.\eea

\smallskip
Now using (\ref{eyy21})-(\ref{eyy25}), we deduce that
\beas \left|\frac{\partial^{I_3}(f(z))}{\partial^I(f(z))}\right|&=&\left|\frac{\partial_{z_l}(\partial_{z_k}(\partial_{z_i}(g(z))))}{g(z)}\right|\\&\leq&
2\left|\frac{\partial_{z_i}(g(z))}{g(z)}\right|\times \left|\frac{\partial_{z_k}(g(z))}{g(z)}\right|\times \left|\frac{\partial_{z_l}(g(z))}{g(z)}\right|+\left|\frac{\partial_{z_l}(g(z))}{g(z)}\frac{\partial_{z_k}(\partial_{z_i}(g))}{g}\right|\nonumber\\&&+
\left|\frac{\partial_{z_i}(g)}{g}\frac{\partial_{z_l}(\partial_{z_k}(g(z)))}{g(z)}\right|+\left|\frac{\partial_{z_k}(g)}{g}\frac{\partial_{z_l}(\partial_{z_i}(g(z)))}{g(z)}\right|\nonumber\\&&+
\left|\partial_{z_l}\left(\partial_{z_k}\left(\frac{\partial_{z_i}(g(z))}{g(z)}\right)\right)\right|
\leq A_{I_3}\left(\frac{T(\alpha^2 r,g)}{r}+\frac{n_{g}(\alpha^2 r)}{r}\right)^{|I_3|-|I|},\nonumber\eeas
for all $z\in\mathbb{C}^m\backslash A$, where $A_{I_3}>0$ is a constant.
By repeating this process, it can be deduced from finite induction that for each positive integer $q$, we have
\bea\label{eyy26} \left|\frac{\partial^{I_q}(f(z))}{\partial^I(f(z))}\right|\leq A_{I_q}\left(\frac{T(\alpha^2 r,g)}{r}+\frac{n_{g}(\alpha^2 r)}{r}\right)^{|I_q|-|I|},\eea
for all $z\in\mathbb{C}^m\backslash A$, where $I_q=(i_{q1},\ldots,i_{qm})$ such that $|I_q|=|I|+q$ and $A_{I_q}>0$ is a constant that depends only on $q$. Then from (\ref{eyy26}), we get
\beas\label{ey1} \displaystyle \left|\frac{\partial^{I_n}(f(z))}{\partial^{I}(f(z))}\right|\leq B\left(\frac{T(\alpha^2 r,g)}{r}+\frac{n_{g}(\alpha^2 r)}{r}\right)^{|I_n|-|I|},\eeas
holds for all $z\in\mathbb{C}^m\backslash A$, where $||z||=r$ and the constant $B$ depends on $n$.
\end{proof}

Next we recall the lemma of logarithmic derivative:

\begin{lem}\label{l2.3}\cite[Lemma 1.37]{HLY1} \cite{ZY1} Let $f:\mathbb{C}^m\to\mathbb{P}^1$ be a non-constant meromorphic function and let $I=(i_1,\ldots,i_m)\in \mathbb{Z}^m_+$ be a multi-index. Then for any $\varepsilon>0$, we have
\[m\left(r,\frac{\partial^I(f)}{f}\right)\leq |I|\log^+T(r,f)+|I|(1+\varepsilon)\log^+\log T(r,f)+O(1)\]
for all large $r$ outside a set $E$ with $\text{log mes}\;E=\int_E d\log r<\infty$.
\end{lem}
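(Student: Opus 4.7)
I would prove Lemma \ref{l2.3} by induction on $|I|$, using the case $|I|=1$ as the base and a telescoping factorization to pass from order $|I|-1$ to order $|I|$. The base case will be extracted from the pointwise estimate of Lemma \ref{l2.2} combined with a sharp Borel-type growth lemma to convert $T(\alpha^2 r,f)$ back to $T(r,f)$ with only a $(1+\varepsilon)\log^{+}\log T$ loss.

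For the base case, fix $i\in\{1,\ldots,m\}$ and specialize Lemma \ref{l2.2} to $g=f$, $n=1$, and $I_n=e_i$, the $i$-th standard unit multi-index, yielding
\[\left|\frac{\partial_{z_i}f(z)}{f(z)}\right|\leq B\left(\frac{T(\alpha^2 r,f)}{r}+\frac{n_{f}(\alpha^2 r)}{r}\right)\]
for all $z\in\mathbb{C}^m\setminus\mathrm{supp}\,\mu_f$ with $\|z\|=r$. The elementary counting-function bound $n_f(t)\log\alpha\leq N_f(\alpha t)\leq T(\alpha t,f)+O(1)$ absorbs $n_f(\alpha^2 r)$ into a constant multiple of $T(\alpha^3 r,f)$, after which applying $\log^{+}$ and integrating over $S(r)$ (noting that $\mathrm{supp}\,\mu_f\cap S(r)$ has $\sigma$-measure zero) gives
\[m\!\left(r,\frac{\partial_{z_i}f}{f}\right)\leq \log^{+}T(\alpha^3 r,f)+O(1).\]
A Borel-Hinkkanen growth lemma applied to the non-decreasing function $T(r,f)$ then replaces $T(\alpha^3 r,f)$ by $T(r,f)+O((\log T(r,f))^{1+\varepsilon})$ outside an exceptional set of finite logarithmic measure; taking logarithms delivers the base case.

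For the inductive step with $|I|\geq 2$, assume $\partial^{J}f\not\equiv 0$ for every $J\leq I$ (the opposite case being trivial), pick $i$ with $I_i\geq 1$, set $I'=I-e_i$, and factorize
\[\frac{\partial^{I}f}{f}=\frac{\partial_{z_i}(\partial^{I'}f)}{\partial^{I'}f}\cdot\frac{\partial^{I'}f}{f}.\]
The base case applied to the meromorphic function $\partial^{I'}f$ in place of $f$ bounds $m(r,\partial_{z_i}(\partial^{I'}f)/\partial^{I'}f)$ by $\log^{+}T(r,\partial^{I'}f)+(1+\varepsilon)\log^{+}\log T(r,\partial^{I'}f)+O(1)$. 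Using $N(r,\partial^{I'}f)\leq(1+|I'|)N(r,f)$ (pole orders grow by at most $|I'|$ under differentiation) together with $m(r,\partial^{I'}f)\leq m(r,f)+m(r,\partial^{I'}f/f)+O(1)$ and the inductive hypothesis on the latter proximity function, one deduces $T(r,\partial^{I'}f)=O(T(r,f))$ outside a finite-log-measure set; hence $\log^{+}T(r,\partial^{I'}f)=\log^{+}T(r,f)+O(1)$ and $\log^{+}\log T(r,\partial^{I'}f)=\log^{+}\log T(r,f)+O(1)$. Adding these bounds to the inductive hypothesis on $m(r,\partial^{I'}f/f)$ and collecting constants produces the desired coefficient $|I|$ in front of $\log^{+}T(r,f)$ and $|I|(1+\varepsilon)$ in front of $\log^{+}\log T(r,f)$.

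The main obstacle lies in obtaining the \emph{sharp} form $|I|\log^{+}T+|I|(1+\varepsilon)\log^{+}\log T$ rather than the weaker $|I|(1+\varepsilon)\log^{+}T$: the crude Borel bound $T(\alpha r,f)\leq T(r,f)^{1+\varepsilon}$ only yields the latter. The remedy is a Borel-Hinkkanen argument using the variable increment $r\mapsto r+r/(\log T(r,f))^{1+\varepsilon}$ applied to the convex (in $\log r$) characteristic function $T(r,f)$, while carefully controlling the finitely many exceptional sets accumulated through the $|I|$-step induction so that their union retains finite logarithmic measure.
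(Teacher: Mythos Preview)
The paper does not supply its own proof of Lemma~\ref{l2.3}; the lemma is simply quoted from \cite[Lemma 1.37]{HLY1} and \cite{ZY1}, so there is no in-paper argument to compare against.

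On the merits of your sketch, the inductive telescoping step and the Hinkkanen--Borel control of $T(r,\partial^{I'}f)$ are standard and fine, but the base case has a genuine gap. You invoke Lemma~\ref{l2.2} as if it furnished a bound
\[
\left|\frac{\partial_{z_i}f(z)}{f(z)}\right|\le B\!\left(\frac{T(\alpha^2 r,f)}{r}+\frac{n_f(\alpha^2 r)}{r}\right)
\]
valid for a \emph{fixed} $\alpha>1$ and for $\sigma$-almost every $z\in S(r)$, which you then integrate. But such a pointwise bound cannot hold uniformly on $S(r)\setminus A$: the quotient $\partial_{z_i}f/f$ has poles along $A=\mathrm{supp}\,\mu_f$, so $\bigl|\partial_{z_i}f/f\bigr|$ is unbounded on any sphere meeting a neighbourhood of $A$, whereas your right-hand side depends only on $r$. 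If you unwind the proof of Lemma~\ref{l2.2} you will see that the parameter $\alpha$ is chosen through $\alpha>\max\{\theta,1/\theta\}$ with $\theta=s/r$, $s=\mathrm{dist}(z,A)$; thus $\alpha$ depends on $z$, and for $z$ close to $A$ one is forced to take $\alpha$ large, making $T(\alpha^2 r,f)$ uncontrolled. Integrating over $S(r)$ with a single $\alpha$ is therefore not justified.

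The arguments in the cited references avoid this by integrating the Poisson--Jensen representation over $S(r)$ \emph{before} estimating, using concavity of $\log$ together with a Cartan/Boutroux-type covering to handle the contribution of $\log^{+}|\partial_{z_i}f/f|$ near the divisor; only after integration does one arrive at $m(r,\partial_{z_i}f/f)\le\log^{+}T(r,f)+(1+\varepsilon)\log^{+}\log T(r,f)+O(1)$. If you want to salvage your route via Lemma~\ref{l2.2}, you must split $S(r)$ into the region at distance $\ge r/\alpha$ from $A$ (where the pointwise bound is valid) and a thin tube around $A\cap S(r)$, and estimate the latter separately in terms of $n_f$; this is essentially equivalent to the classical proof.
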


\begin{lem}\label{l2.4} Let $f:\mathbb{C}^m\to\mathbb{P}^1$ be a non-constant meromorphic function and $I=(\alpha_1,\ldots,\alpha_n)\in \mathbb{Z}^m_+$ be a multi-index. Let $\alpha>1$ be a real constant. If $a\in\mathbb{P}^1$, then we have
\bea\label{exx0} n_{\mu^a_{\partial^{I}(f)}}(r)\leq \frac{|I|+3}{\log \alpha}T(\alpha r,f),\eea
for all large $r$ outside a set $E_1$ with $\text{log mes}\;E_1<+\infty$.
\end{lem}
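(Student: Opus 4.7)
The plan is to follow the classical three–step template that relates $n(r,a;\partial^{I}(f))$ to $T(\alpha r,f)$: namely, pass from $n$ to $N$ via an average, then apply the first main theorem, and finally bound $T(r,\partial^{I}(f))$ by $T(r,f)$ using the logarithmic derivative lemma (Lemma \ref{l2.3}) together with the fact that partial differentiation can only mildly increase pole multiplicities.

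For the first step, since $n_{\mu^{a}_{\partial^{I}(f)}}$ is non-negative and non-decreasing, I would write
\[
n_{\mu^{a}_{\partial^{I}(f)}}(r)\log\alpha
=\int_{r}^{\alpha r}n_{\mu^{a}_{\partial^{I}(f)}}(r)\,\frac{dt}{t}
\leq\int_{r}^{\alpha r}n_{\mu^{a}_{\partial^{I}(f)}}(t)\,\frac{dt}{t}
\leq N_{\mu^{a}_{\partial^{I}(f)}}(\alpha r),
\]
and then invoke the first main theorem $N_{\mu^{a}_{\partial^{I}(f)}}(\alpha r)\leq T(\alpha r,\partial^{I}(f))+O(1)$. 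This reduces everything to controlling $T(\alpha r,\partial^{I}(f))$ by $T(\alpha r,f)$.

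For the second step, I would split $T(r,\partial^{I}(f))=m(r,\partial^{I}(f))+N(r,\partial^{I}(f))$. The proximity part is handled by writing $m(r,\partial^{I}(f))\leq m(r,\partial^{I}(f)/f)+m(r,f)$ and applying Lemma \ref{l2.3} to bound $m(r,\partial^{I}(f)/f)$ by a logarithmic $S(r,f)$–type term outside a set of finite logarithmic measure. The counting-function part requires the local observation that if one writes $f=u/h^{k}$ in a neighbourhood of a generic point of the pole divisor (with $h$ a reduced local defining function for a codimension-one component and $u$ holomorphic and non-vanishing there), then each application of $\partial_{z_{i}}$ raises the pole multiplicity by at most $1$. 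Iterating $|I|$ times yields the divisorial bound
\[
N(r,\partial^{I}(f))\leq N(r,f)+|I|\,\ol N(r,f)\leq (|I|+1)\,N(r,f).
\]
Combining, $T(r,\partial^{I}(f))\leq (|I|+1)\,T(r,f)+S(r,f)$, and for $r$ outside the exceptional set this can be absorbed into $(|I|+2)\,T(r,f)$.

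Stringing the estimates together at the radius $\alpha r$ yields
\[
n_{\mu^{a}_{\partial^{I}(f)}}(r)
\leq\frac{T(\alpha r,\partial^{I}(f))+O(1)}{\log\alpha}
\leq\frac{(|I|+2)\,T(\alpha r,f)+O(1)}{\log\alpha},
\]
and, for all large $r$ outside a set $E_{1}$ of finite logarithmic measure, the $O(1)$ term is absorbed into an extra factor of $T(\alpha r,f)$, giving the coefficient $|I|+3$. The step I expect to require the most care is the pole-multiplicity estimate in $\mathbb{C}^{m}$: the one-variable statement $N(r,f^{(k)})\leq (k+1)N(r,f)$ is folklore, but in several variables one must argue locally via the Weierstrass-type factorisation of $f$ along each irreducible component of its polar set to ensure the multiplicity really grows by at most one per differentiation.
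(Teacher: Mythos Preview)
Your proposal is correct and follows essentially the same route as the paper's own proof: the paper, too, first derives the divisorial bound $N(r,\partial^{I}(f))\leq N(r,f)+|I|\,\ol N(r,f)$ via a local coordinate argument at a regular point of the pole set, combines it with Lemma~\ref{l2.3} to obtain $T(r,\partial^{I}(f))\leq (1+o(1))(|I|+1)T(r,f)$ off a set of finite logarithmic measure, and then passes from $n$ to $N$ by the same averaging over $[r,\alpha r]$ together with the first main theorem. The only cosmetic difference is the order of presentation and the bookkeeping of the constants (the paper tracks $(1+o(1))(|I|+1)$ and absorbs it into $|I|+3$, whereas you pass through $|I|+2$ and absorb an additive $O(1)$); the step you flagged as delicate---the several-variables pole-multiplicity estimate---is handled in the paper exactly as you anticipated.
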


\begin{proof} Let us assume that $l=\mu_f^{\infty}(z_0)>0$ for some $z_0\in\mathbb{C}^m$. Note that we can find holomorphic functions $g$ and $h$ such that $\dim g^{-1}(\{0\})\cap \dim h^{-1}(\{0\})\leq m-2$ and $f=\frac{g}{h}$. Set
\[S=\big(g^{-1}(\{0\})\cap h^{-1}(\{0\})\big)\cup h^{-1}(\{0\})_s,\]
where $h^{-1}(\{0\})_s$ is the set of singular points of $h^{-1}(\{0\})$. Then $S$ is an analytic subset of $\mathbb{C}^m$ of dimension $\leq m-2$. With out loss of generality, we assume that $z_0\in h^{-1}(\{0\})-S$ is a regular point. Then there is a holomorphic coordinate system $\left(U ; \varphi_1, \ldots, \varphi_m\right)$ of $z_0$ in $\mathbb{C}^m-g^{-1}(\{0\})$ such that 
\[U \cap h^{-1}(\{0\})=\left\{z \in U \mid \varphi_1(z)=0\right\}\;\text{and}\;\;\varphi_j(z_0)=0\]
for $j=1,2,\ldots,m$ (see proof of Lemma 2.3 \cite{FL1}). A biholomorphic coordinate transformation $z_j=z_j(\varphi_1, \ldots, \varphi_m)$, $j=1, \ldots, m$ near $0$ exists such that $z_0=(z_1(0), \ldots, z_m(0))$ and $f=\varphi_1^{-l} \hat{f}\left(\varphi_1, \ldots, \varphi_m\right)$,
where $\hat{f}$ is a holomorphic function near $0$ and zero free along the set $h^{-1}(\{0\})$. Now for any $i\in\mathbb{Z}[1, m]$, we have
\bea\label{sbb.3a} \partial_i(f)=\sideset{}{_{j=1}^m}{\sum} \frac{\partial f}{\partial \varphi_j}\frac{\partial \varphi_j}{\partial z_i}=\frac{-l}{\varphi_1^{l+1}}\hat{f}\frac{\partial \varphi_1}{\partial z_i}+\frac{1}{\varphi_1^{l}}\sideset{}{_{j=1}^m}{\sum}\frac{\partial \hat{f}}{\partial \varphi_j}\frac{\partial \varphi_j}{\partial z_i},\eea
which means
$\mu^{\infty}_{\partial_{z_i}(f)}(z_0)\leq \mu^{\infty}_f(z_0)+\mu^{\infty}_{f,1}(z_0)$.
By induction, we get
\[\mu^{\infty}_{\partial^I(f)}(z_0)\leq \mu^{\infty}_f(z_0)+|I|\mu^{\infty}_{f,1}(z_0)\]
and so
\[N(r,\partial^{I}(f))\leq N(r,f)+|I|\ol N(r,f).\]

Using Lemma \ref{l2.3}, we get
\bea\label{exx2}T(r,\partial^{I}(f))=N(r,\partial^{I}(f))+m(r,\partial^{I}(f))&\leq& N(r,\partial^{I}(f))+m(r,f)+o(T(r,f))
\\&\leq&(1+o(1))(|I|+1)T(r,f),\nonumber\eea
for all $r$ outside a set $E_1$ with $\text{log mes}\;E_1<+\infty$.
Note that
$n_{\mu^a_f}(r)=r\frac{\partial}{\partial r} N_{\mu^a_f}(r)$. For $\alpha>1$, we have
\bea\label{exx4} n_{\mu^a_{\partial^{I}(f)}}(r)\log \alpha\leq \int_{r}^{\alpha r} \frac{n_{\mu^a_{\partial^{I}(f)}}(t)}{t}dt\leq N_{\mu^a_{\partial^{I}(f)}}(\alpha r).\eea

Since $N_{\mu^a_{\partial^{I}(f)}}(\alpha r)\leq T(\alpha r,\partial^{I}(f))+O(1)$ as $r\rightarrow \infty$, from (\ref{exx4}), we get
\bea\label{exx5} n_{\mu^a_{\partial^{I}(f)}}(r)\leq (1+o(1))\frac{T(\alpha r,\partial^{I}(f))}{\log \alpha}\;\;\text{as}\;\; r\rightarrow \infty.\eea

If $|I|=0$, then (\ref{exx0}) follows from (\ref{exx5}). If $|I|>0$, then by (\ref{exx2}) and (\ref{exx5}), we have
\[n_{\mu^a_{\partial^{I}(f)}}(r)\leq \frac{|I|+3}{\log \alpha}T(\alpha r,f),\]
for all large $r$ outside a set $E_1$ with $\text{log mes}\;E_1<+\infty$.
\end{proof}

The following lemma is the Hinkkanen's Borel-type growth lemma (or see similar lemmas \cite[Lemma 3.3.1]{CY1}) and \cite[Lemma 7]{ZY1}).
\begin{lem}\cite[Lemma 4]{AH}\label{l3.1} Let $\varphi(r)$ and $\psi(r)$ be positive nondecreasing functions defined for $r\geq r_1>0$ and $r\geq r_2>0$ respectively such that
\beas \int_{r_1}^{\infty} \frac{d r}{\varphi(r)}=\infty\;\;\text{and}\;\; \quad \int_{r_2}^{\infty} \frac{d r}{r \psi(r)}<\infty.\eeas

Let $T(r)$ be a positive nondecreasing function defined for $r\geq r_3 \geq r_1$ and $T(r)\geq r_2$. Then if $C_1$ is real with $C_1>1$, one has
\[T\left(r+\frac{\varphi(r)}{\psi(T(r))}\right) \leq C_1 T(r)\]
whenever $r\geq r_3$ and $r\notin E_1$, where $\int_{E_1} \frac{d r}{\varphi(r)}<+\infty$.
\end{lem}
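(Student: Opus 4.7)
\medskip
\noindent\textbf{Proof proposal for Lemma \ref{l3.1}.} The plan is to define the exceptional set directly as
\[
E_1 = \Big\{r \geq r_3 : T\Big(r + \tfrac{\varphi(r)}{\psi(T(r))}\Big) > C_1 T(r)\Big\},
\]
and show by a greedy selection argument that $\int_{E_1}\frac{dr}{\varphi(r)} < \infty$. First I would construct inductively a sequence $(s_n, t_n)$ of ``bad'' intervals as follows: let $s_1 = \inf\{r \geq r_3 : r \in E_1\}$, put $t_1 = s_1 + \varphi(s_1)/\psi(T(s_1))$, and then recursively set $s_{n+1} = \inf\{r \geq t_n : r \in E_1\}$ and $t_{n+1} = s_{n+1} + \varphi(s_{n+1})/\psi(T(s_{n+1}))$. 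By construction the intervals $[s_n, t_n]$ are pairwise disjoint and cover $E_1$, so
\[
\int_{E_1}\frac{dr}{\varphi(r)} \leq \sum_n \int_{s_n}^{t_n}\frac{dr}{\varphi(r)}.
\]

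Next I would exploit that $\varphi$ is nondecreasing to estimate each summand: on $[s_n, t_n]$ we have $\varphi(r) \geq \varphi(s_n)$, hence
\[
\int_{s_n}^{t_n}\frac{dr}{\varphi(r)} \leq \frac{t_n - s_n}{\varphi(s_n)} = \frac{1}{\psi(T(s_n))}.
\]
Since $s_n \in E_1$, the defining inequality gives $T(t_n) > C_1 T(s_n)$, and the monotonicity of $T$ then yields $T(s_{n+1}) \geq T(t_n) > C_1 T(s_n)$. Iterating, the sequence $(T(s_n))$ grows at least geometrically, so in particular $T(s_n) \to \infty$.

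The main obstacle, and the heart of the argument, is to pass from the discrete sum $\sum 1/\psi(T(s_n))$ to the convergent integral $\int^{\infty} \frac{dt}{t\psi(t)}$. For this I would use the fact that $\psi$ is nondecreasing to write
\[
\int_{T(s_n)}^{C_1 T(s_n)}\frac{dt}{t\,\psi(t)} \geq \frac{\log C_1}{\psi(C_1 T(s_n))} \geq \frac{\log C_1}{\psi(T(s_{n+1}))},
\]
where the last inequality uses $T(s_{n+1}) \geq C_1 T(s_n)$. Because the intervals $[T(s_n), C_1 T(s_n)] \subset [T(s_n), T(s_{n+1})]$ are disjoint, summing gives
\[
\sum_{n\geq 2} \frac{1}{\psi(T(s_n))} \leq \frac{1}{\log C_1}\int_{T(s_1)}^{\infty}\frac{dt}{t\,\psi(t)} < \infty,
\]
by the hypothesis on $\psi$. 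Since the $n=1$ term is finite, the full series converges, and combining with the earlier estimate produces the desired bound $\int_{E_1}\frac{dr}{\varphi(r)} < \infty$. The hypothesis $\int_{r_1}^{\infty}\frac{dr}{\varphi(r)} = \infty$ then ensures that $E_1$ is genuinely exceptional (it cannot swallow the whole ray $[r_3, \infty)$), completing the proof.
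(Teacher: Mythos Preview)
The paper does not prove this lemma; it merely quotes it from Hinkkanen \cite{AH} (and points to similar results in \cite{CY1,ZY1}), so there is no ``paper's own proof'' to compare against. Your argument is the standard Borel--Nevanlinna selection proof and is essentially correct: the greedy construction of the intervals $[s_n,t_n]$, the bound $\int_{s_n}^{t_n}dr/\varphi(r)\le 1/\psi(T(s_n))$ via monotonicity of $\varphi$, the geometric growth $T(s_{n+1})\ge C_1 T(s_n)$, and the comparison $\sum_{n\ge 2}1/\psi(T(s_n))\le (\log C_1)^{-1}\int_{T(s_1)}^{\infty}dt/(t\psi(t))$ are exactly the right ingredients.

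One small technical point deserves a sentence of care: you use ``$s_n\in E_1$'' to obtain $T(t_n)>C_1T(s_n)$, but $s_n$ is defined as an infimum and $E_1$ need not be closed (the functions $T,\varphi,\psi$ are only assumed nondecreasing, not continuous). The usual fix is either to choose $s_n\in E_1$ with $s_n$ within $2^{-n}$ of that infimum (the extra slivers then contribute a convergent series to $\int_{E_1}dr/\varphi(r)$), or to observe that if the infimum is not attained one can pass to a limit using monotonicity. With that adjustment the proof is complete.
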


\medskip
Let $U$ be an open subset of $\mathbb{C}^m$. Let $A \subset  U$ be an analytic subset of pure dimension $l$ and let $\iota_{R(A)} : R(A)\longrightarrow \mathbb{C}^m$ be the inclusion map. For a compact subset $K\subset U$ the integral
\beas \displaystyle \int_{K\cap A} \alpha ^l=\int_{K\cap R(A)} \alpha^l = \int_{K\cap R(A)} {\iota}^{\ast} _{R(A)} \alpha^l  \eeas
is considered as a measure of $K\cap A$. We now recall the following lemma.

\begin{lem}\cite[Lemma 2.2.9]{NW}\label{l2.5} Let the notation be as above.
\begin{enumerate}
\item[(i)] If $l=\dim A < m$, the Lebesgue measure of $A$ in $\mathbb{C}^m$ is zero.
\item [(ii)] $\int_{K\cap R(A)} \alpha^l<\infty$.
\end{enumerate}
\end{lem}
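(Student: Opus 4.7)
The plan is to handle the two parts separately, reducing both to local computations at regular points via the holomorphic implicit function theorem, together with induction on $l=\dim A$.

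For part (i), I would argue by induction on $l$. The singular locus $S(A)$ is an analytic subset of $A$ with $\dim S(A)<l$ (a standard fact about analytic sets), so by the inductive hypothesis $S(A)$ has Lebesgue measure zero in $\mathbb{C}^m$. It therefore suffices to show that $R(A)$ has Lebesgue measure zero. At any regular point $a\in R(A)$, the definition provides holomorphic functions $f_1,\ldots,f_{m-l}$ defined near $a$ whose differentials at $a$ are linearly independent and which cut out $A$ locally. By the holomorphic implicit function theorem there exist local holomorphic coordinates $(w_1,\ldots,w_m)$ in which $A$ becomes $\{w_{l+1}=\cdots=w_m=0\}$, a real $2l$-dimensional submanifold. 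Since $2l<2m$, this has Lebesgue measure zero in the coordinate chart. Covering $R(A)$ by countably many such charts (second countability) completes the argument.

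For part (ii), the same local straightening shows that on each chart $V$ at a regular point $a$, the pullback $\iota_{R(A)}^{*}\alpha^l$ is a smooth positive $(l,l)$-form on the $l$-dimensional slice and is pointwise dominated by $C\,(dd^c\|w\|^2)^l$ restricted to the $w_1,\ldots,w_l$ coordinate subspace, for a constant $C$ depending only on the chart. Integrating this form over a relatively compact piece of $V\cap R(A)$ yields a finite value, so covering the compact set $K$ by finitely many such charts takes care of the ``bulk'' of the integral.

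The main obstacle is that $K\cap R(A)$ need not be relatively compact in $U$, since points of $K$ can accumulate on $S(A)\cap K$, and one must rule out blow-up of the integral near the singular locus. I would resolve this by appealing to the classical theorem (due in essence to Lelong) that a pure $l$-dimensional analytic set has locally finite $2l$-dimensional Hausdorff measure. The standard argument uses generic linear projections $\pi\colon\mathbb{C}^m\to\mathbb{C}^l$: after a small rotation of coordinates, $\pi$ restricts on each small neighborhood $V'$ to a finite-sheeted branched covering of an open subset of $\mathbb{C}^l$, giving
\[\int_{R(A)\cap V'}\iota_{R(A)}^{*}\alpha^l \;=\; \nu\cdot\int_{\pi(V'\cap A)}(dd^c\|z'\|^2)^l,\]
where $\nu$ is the covering degree. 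Covering $K$ by finitely many such $V'$ and summing yields a finite bound for $\int_{K\cap R(A)}\alpha^l$, completing the proof of (ii).
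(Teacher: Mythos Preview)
The paper does not prove this lemma at all; it is quoted verbatim from \cite[Lemma 2.2.9]{NW} and used as a black box. So there is no ``paper's own proof'' to compare against, and your outline is being judged on its own merits.

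Your argument for (i) is the standard one and is fine: stratify $A=R(A)\cup S(A)$, use $\dim S(A)<l$ for the inductive step, and locally flatten $R(A)$ by the implicit function theorem into a real $2l$-plane in $\mathbb{R}^{2m}$.

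For (ii), however, the displayed identity
\[
\int_{R(A)\cap V'}\iota_{R(A)}^{*}\alpha^l \;=\; \nu\cdot\int_{\pi(V'\cap A)}(dd^c\|z'\|^2)^l
\]
is false, and the error goes in the wrong direction for what you need. An orthogonal projection is distance-decreasing, so by Wirtinger the left-hand side is \emph{at least} the right-hand side, not equal to it; already for the line $A=\{z_2=z_1\}\subset\mathbb{C}^2$ with $\pi(z_1,z_2)=z_1$ the two sides differ by a factor of $2$. Thus your formula gives a lower bound on the volume, which is useless for proving finiteness near $S(A)$. The actual argument requires more: one expands $\alpha^l$ as a sum $\sum_{|I|=l}c_I\,\bigwedge_{i\in I}\tfrac{\iota}{2}dz_i\wedge d\bar z_i$ over $l$-element subsets $I\subset\{1,\dots,m\}$, and observes that for each $I$ the restriction of the $I$-th summand to $R(A)$ is exactly $\pi_I^{*}(dd^c\|z'\|^2)^l$ for the coordinate projection $\pi_I$; integrating that single summand \emph{does} give the sheet number times the base volume, and one then sums over finitely many $I$. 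Alternatively one invokes Lelong's theorem that $[A]$ is a closed positive current and hence has locally finite mass, but that is a substantial result, not a one-line consequence of the branched-cover picture. As written, your step from ``branched cover'' to ``finite integral'' has a genuine gap.
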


\medskip
Let $A \subset \mathbb{C}^m$ be an analytic subset of pure dimension $l<m$. Consider the collection $\mathscr{K}$ of all compact sets in $\mathbb{C}^m$. Then
\[\displaystyle m^*_m(A)=\inf\left\lbrace \sum\limits_{j=1}^{\infty} \int_{K_j\cap A} \alpha ^l: K_1,K_2,\ldots\in \mathscr{K}\;\;\text{so that}\;\; A\subset \bigcup\limits_{j=1}^{\infty} K_j \right\rbrace,\]
which is called the Lebesgue outer measure of $A$ in $\mathbb{C}^m$. Clearly by Lemma \ref{l2.5}, we have $m^*_m(A)=0$. Also by the definition of $m^*_m$,
for a given $\delta>0$ there exists a countable collection $\{K_j\}$ of compact sets such that $A\subset \bigcup\limits_{j=1}^{\infty} K_j$ satisfying
\bea\label{ee}\displaystyle \sum\limits_{j=1}^{\infty} \int_{K_j\cap A} \alpha ^l<\delta.\eea

Since $A$ is a closed set, it follows that $K_j\cap A$ is also a compact set for $j=1,2,\ldots$ and so the set $K_j\cap A$ is also measurable for $j=1,2,\ldots$. Again sine the integral
$\int_{K_j\cap A} \alpha ^l$ is considered as a measure of $K\cap A$, from (\ref{ee}), we have
\bea\label{eee} \displaystyle \sum\limits_{j=1}^{\infty} m(K_j\cap A)<\delta.\eea

We know that the function $\tau:\mathbb{C}^m\to \mathbb{R}[0,\infty)$ defined by $\tau(z)=||z||$ is continuous. Consequently $\tau(K_j\cap A)$ is also a compact set in $\mathbb{R}[0,\infty)$ for $j=1,2,\ldots$. Since
\[\displaystyle A=\bigcup\limits_{j=1}^{\infty}(K_j\cap A),\]
it follows that
\[\displaystyle \tau(A)\subset\bigcup\limits_{j=1}^{\infty} \tau(K_j\cap A)\]
and so (\ref{eee}) gives
\bea\label{eeee} \displaystyle m^*_m(\tau(A))\leq \bigcup\limits_{j=1}^{\infty} m^*_m(\tau(K_j\cap A))=\bigcup\limits_{j=1}^{\infty} m_m(\tau(K_j\cap A))<\delta.\eea

We know that $\tau$ is continuous and $A$ is closed. Therefore $\tau(A)$ is also closed and so $\tau(A)$ is measurable. Consequently from (\ref{eeee}), we have $m_m(\tau(A))<\delta$ and so the set $\tau(A)$ is of finite measurable.
Let $E_1=\tau(A)\backslash  [0,1]$. Then $m_m(E_1)<\delta$. We know that $E$ is of finite logarithmic measure whenever it is of finite linear
measure and so trivially we have
\bea\label{q1}\displaystyle \text{log mes}\;E_1\leq m_m(E_1)<\delta<+\infty.\eea

\subsection {{\bf Proof of Theorem \ref{t2.1}}}
\begin{proof} Define $A=\text{supp}\; \mu_{\partial^{I}(f)}$. Clearly $A$ is an analytic set of pure dimension $m-1$ and $m_m(A)=0$.
Let $\alpha>1$ be a real constant. Then from Lemma \ref{l2.2}, one can easily deduce that
\bea\label{ey1} \left|\frac{\partial^{I_n}(f(z))}{\partial^{I}(f(z))}\right|\leq B_1\left(\frac{T(\alpha^2 r,\partial^{I}(f))}{r}+\frac{n_{\partial^{I}(f)}(\alpha^2 r)}{r}\right)^{|I_n|-|I|},\eea
for $z\in \mathbb{C}^m\backslash A$, where the constant $B_1$ depends only on $n$ and $|I|$.
Let $E_1=\tau(A)\backslash  [0,1]$. Now from (\ref{q1}), we see that $\text{log mes}\;E_1<+\infty$.
Therefore from (\ref{ey1}), we get
\bea\label{ey2} \left|\frac{\partial^{I_n}(f(z))}{\partial^{I}(f(z))}\right|\leq B\left(\frac{T(\alpha^2 r,\partial^{I}(f))}{r}+\frac{n_{\partial^{I}(f)}(\alpha^2 r)}{r}\right)^{|I_n|-|I|},\eea
for $z\in \mathbb{C}^m$ such that $||z||\not\in [0,1]\cup E_1$. Also from (\ref{exx2}), we have
\[T(\alpha^2 r,\partial^{I}(f))\leq 2(|I|+1)T(2\alpha^2r,f)\]
for all large $r$ outside a set $E_2$ with $\text{log mes}\;E_2<+\infty$. Let $E=E_1\cup E_2$. Clearly $\text{log mes}\;E<+\infty$.
Finally from (\ref{ey2}), we get
\bea\label{ey4} \left|\frac{\partial^{I_n}(f(z))}{\partial^{I}(f(z))}\right|\leq B\left(\frac{T(\alpha^2 r,f)}{r}+\frac{n_{\partial^{I}(f)}(\alpha^2 r)}{r}\right)^{|I_n|-|I|},\eea
for $z\in \mathbb{C}^m$ such that $||z||\not\in [0,1]\cup E$, where the constant $B$ depends only on $n$ and $|I|$.
\end{proof}

\subsection {{\bf Proof of Corollary \ref{c2.1}}}
\begin{proof} At first from (\ref{exx0}), we have
\bea\label{ey5} n_{\partial^{I}(f)}(\alpha^2 r)\leq 2\frac{|I|+3}{\log \alpha}T(\alpha^3 r,f),\eea
for all large $r$ outside a set $E_3$ with $\text{log mes}\;E_3<+\infty$. Then from (\ref{ey4}) and (\ref{ey5}), we get
\bea\label{ey6} \left|\frac{\partial^{I_n}(f(z))}{\partial^{I}(f(z))}\right|\leq C\left(\frac{T(\alpha^3 r,f)}{r}\right)^{|I_n|-|I|},\eea
for $z\in \mathbb{C}^m$ such that $||z||\not\in [0,1]\cup E$, where $E=E_1\cup E_2\cup E_3$ and the constant $C>0$ depends only on $n$ and $|I|$.
Let us choose
\[\alpha^3=1+\frac{\varphi(r)}{r\psi(T(r,f))},\]
where $\psi(r)=(\log r)^{1+\varepsilon_1}\;(\varepsilon_1>0)$ and $\varphi(r)=r$. Now applying Lemma \ref{l3.1} to the increasing function $T(r,f)$, we have
\bea\label{ey6a}T(\alpha^3 r,f)=T\left(r+\frac{\varphi(r)}{\psi(T(r,f))}\right)\leq C_0 T(r,f),\eea
for all $r\not\in E_4$ with $\text{log mes}\;E_4<+\infty$. Then from (\ref{ey6}) and (\ref{ey6a}), we get
\bea\label{ey6b} \left|\frac{\partial^{I_n}(f(z))}{\partial^{I}(f(z))}\right|\leq D\left(\frac{T(r,f)}{r}\right)^{|I_n|-|I|},\eea
for $z\in \mathbb{C}^m$ such that $||z||\not\in [0,1]\cup E\cup E_4$ and $D$ is a constant.
Since $\rho=\rho(f)<+\infty$, for every $\varepsilon>0$, there exists $R>0$ such that $T(r,f)<r^{\rho+\varepsilon/2}$
for all $r\geq R$. Let us choose $\varepsilon>0$ such that $D<r^{(|I_n|-|I|)\varepsilon/2}$ for large values of $r$. Finally from (\ref{ey6b}), we get the desired result.
\end{proof}

\section{{\bf Maximum term and central index of holomorphic function in several complex variables}}
Given a point $c =(c _1,\ldots,c_m)\in \mathbb{C}^m$ and a positive real number $r_1,\ldots,r_m$ we put
\[U_{(r_1,\ldots,r_m)}(c)=\{z=(z_1,\ldots,z_m)\in\mathbb{C}^m: |z_k-c_k|<r_k, k=1,2,\ldots,m\}.\]

If $U_{r_k}(c_k)$ is the disk with centre $c_k$ and radius $r_k$ on the $z_k$-plane, then 
\[U_{(r_1,\ldots,r_m)}(c)=U_{r_1}(c_1)\times\ldots \times U_{r_m}(c_m).\]
 
We call $U_{(r_1,\ldots,r_m)}(c)$ the polydisk with centre $c$. Clearly 
\[\ol{U}_{(r_1,\ldots,r_m)}(c)=\{z=(z_1,\ldots,z_m)\in\mathbb{C}^m: |z_k-c_k|\leq r_k, k=1,2,\ldots,m\}\]
and $\ol{U}_r(c)=\ol{U}_{(r,\ldots,r)}(c)$. We denote by $C_k(c_k,r_k)$ the boundary of $U_{r_k}(c_k)$.
The product 
\[C_{(c)}^m(r_1,\ldots,r_m)=C_1(c_1,r_1)\times\ldots\times C_n(c_m,r_m)\]
is called the determining set of the polydisk $U_{(r_1,\ldots,r_m)}(c)$. $C_{(c)}^m(r_1,\ldots,r_m)$ is an $m$-dimensional torus. Clearly 
\[C_{(c)}^m(r)=C_1(c_1,r)\times\ldots\times C_m(c_m,r).\]

\smallskip
Let $f:\mathbb{C}^m\to \mathbb{C}$ be a holomorphic function. We define
\[M(r,f)=\max\limits_{z\in \mathbb{C}^m\langle r\rangle}|f(z)|,\]
where $\mathbb{C}^m\langle r\rangle=\{z\in\mathbb{C}^m:||z||=r\}$. If $f$ is non-constant, then $M(r,f)\to \infty$ as $r\to \infty$.
We can expand $f(z)$ to a Taylor series with multi-indices $\alpha$
\bea\label{eww} f(z)=\sum\limits_{|\alpha|=0}^{\infty}a_{\alpha}z^{\alpha}=\sum\limits_{\alpha_1,\ldots,\alpha_m=0}^{\infty} a_{\alpha_1\ldots\alpha_m}z_1^{\alpha_1}\ldots z_m^{\alpha_m}.\eea

By Cauchy's inequalities, we have
\bea\label{ew1}|a_{\alpha_1\ldots\alpha_m}|\leq \frac{\max\limits_{z\in C_{(0)}^m(r)} |f(z)|}{r^{|\alpha|}}.\eea

Obviously the power series defined by (\ref{eww}) is absolutely convergent in $C_{(0)}^m(r,\ldots,r)$, where $r$ is any positive real number. Consequently the power series
\bea\label{ew2}\sum\limits_{\alpha_1,\ldots,\alpha_m=0}^{\infty}|a_{\alpha_1\ldots\alpha_m}|r^{\alpha_1+\ldots+\alpha_m}\eea
converges for every $r>0$. Define
\bea\label{ew3}||a_{|\alpha|}||_{1}=\sum\limits_{\alpha_1+\ldots+\alpha_m=|\alpha|} |a_{\alpha_1\ldots\alpha_m}|.\eea

Therefore from (\ref{ew2}) and (\ref{ew3}), we conclude that the power series $\sum_{|\alpha|=0}^{\infty}||a_{|\alpha|}||_1\;r^{|\alpha|}$ converges for every $r>0$. Consequently for a given $r>0$, we have
\bea\label{ew4} \lim\limits_{|\alpha|\to\infty}||a_{|\alpha|}||_1 r^{|\alpha|}=0\eea
and so the maximum term
\[\tilde{\mu}(r)=\tilde{\mu}(r,f)=\max_{|\alpha|\geq 0} ||a_{|\alpha|}||_1\;r^{|\alpha|}\]
is well defined. Next we define
\[\tilde{\nu}(r)=\tilde{\nu}(r,f)=\max\{|\alpha|:||a_{|\alpha|}||_1\;r^{|\alpha|}=\tilde{\mu}(r,f)\}.\]

It is easy to verify that
\[||a_{|\alpha|}||_1\;r^{|\alpha|}\leq \tilde{\mu}(r,f)\;\;\text{for all}\;\;|\alpha|\geq 0\;\;\text{and}\;\; ||a_{|\alpha|}||_1\;r^{|\alpha|}<\tilde{\mu}(r,f)\;\;\text{for all}\;\;|\alpha|>\tilde{\nu}(r,f).\]

For $0<r<R$, we also have
\[ ||a_{\tilde{\nu}(R)}||_1\;R^{\tilde{\nu}(R)}\geq ||a_{\tilde{\nu}(r)}||_1\;R^{\tilde{\nu}(r)}
\;\;\text{and}\;\;||a_{\tilde{\nu}(r)}||_1\;r^{\tilde{\nu}(r)}\geq ||a_{\tilde{\nu}(R)}||_1\;r^{\tilde{\nu}(R)}.\]

Clearly, for a polynomial
\[P(z)=\sum\limits_{\alpha_1,\ldots,\alpha_m=0}^{n}a_{\alpha_1\ldots\alpha_m}z_1^{\alpha_1}\ldots z_m^{\alpha_m},\]
we have $\tilde{\mu}(r,P)=||a_{n}||_1\;r^{n}$ and $\tilde{\nu}(r,P)=n$. Also for $z\in \mathbb{C}^m_{(0)}(r)$, we can prove that
\bea\label{mm} (1-o(1))||a_n||_1r^n\leq |P(z)|\leq (1+o(1))||a_n||_1r^n.\eea

Henceforth we may assume that $f$ is a transcendental entire function while considering basic properties of the maximum term and the central index.
The following properties hold (see Jank and Volkmann \cite{JV1}, pp. 33-35):
\begin{enumerate}\item[(1)]$\tilde{\mu}(r,f)$ is strictly increasing for all $r$ sufficiently large, is continuous and tends to $+\infty$ as $r\rightarrow \infty$;
\item[(2)] $\tilde{\nu}(r,f)$ is increasing, piecewise constant, right-continuous and also tends to $+\infty$ as $r\rightarrow \infty$.
\end{enumerate}

\subsection{{\bf Relations between $\tilde{\mu}(r)$, $\tilde{\nu}(r)$ and $M(r)$}}

\begin{theo}\label{t3.1} Let $f:\mathbb{C}^m\to \mathbb{C}$ be a non-constant entire function. Then there exists a positive real number $R$ sufficiently large such that
\bea\label{ew5a}\tilde{\mu}(r,f)\leq M^m(\sqrt{m}r,f),\;\;\text{for all}\;\;r\geq R.\eea
\end{theo}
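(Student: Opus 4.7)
The plan is to obtain a preliminary bound of the form $\tilde{\mu}(r,f)\le N(\tilde{\nu}(r,f))\,M(\sqrt{m}\,r,f)$ via Cauchy's inequality, and then to absorb the combinatorial factor $N$ into an extra power of $M$ by a Wiman--Valiron type control of the central index. First I would apply the Cauchy inequality (\ref{ew1}) on the polydisc of polyradius $(r,\ldots,r)$. The crucial geometric observation is that every point of the determining torus $C^m_{(0)}(r)$ has Euclidean norm $\sqrt{r^2+\cdots+r^2}=\sqrt{m}\,r$, so $\max_{z\in C^m_{(0)}(r)}|f(z)|\le M(\sqrt{m}\,r,f)$, and hence
\[
|a_\alpha|\,r^{|\alpha|}\le M(\sqrt{m}\,r,f)\qquad\text{for every multi-index }\alpha.
\]
There are exactly $\binom{n+m-1}{m-1}$ multi-indices with $|\alpha|=n$; summing over them and then choosing $n=\tilde{\nu}(r,f)$ gives
\[
\tilde{\mu}(r,f)\le \binom{\tilde{\nu}(r,f)+m-1}{m-1}\,M(\sqrt{m}\,r,f). \qquad(\ast)
\]

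Next I would show that for $r$ large enough,
\[
\binom{\tilde{\nu}(r,f)+m-1}{m-1}\le M(\sqrt{m}\,r,f)^{m-1}.\qquad(\dagger)
\]
Non-constancy of $f$, together with the Cauchy estimate applied to any nonzero coefficient $a_{\alpha_0}$ with $|\alpha_0|\ge 1$, forces $M(\sqrt{m}\,r,f)\to\infty$. If $f$ is a polynomial of degree $d$, then $\tilde{\nu}(r,f)=d$ for all large $r$, so the left side of $(\dagger)$ is a fixed constant while the right side tends to infinity, and $(\dagger)$ holds. If $f$ is transcendental, I would use a Wiman--Valiron style comparison: evaluating $\tilde{\mu}(er,f)=\max_n||a_n||_1 (er)^n$ at $n=\tilde{\nu}(r,f)$ yields $\tilde{\mu}(er,f)\ge e^{\tilde{\nu}(r,f)}\tilde{\mu}(r,f)$, while summing the Cauchy estimate $|a_\alpha|(2s)^{|\alpha|}\le M(2\sqrt{m}\,s,f)$ against the geometric series $\sum_n\binom{n+m-1}{m-1}2^{-n}=2^m$ gives the crude upper bound $\tilde{\mu}(s,f)\le 2^m\,M(2\sqrt{m}\,s,f)$. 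Combining these two inequalities produces $\tilde{\nu}(r,f)=O\!\left(\log M(2\sqrt{m}\,er,f)\right)$, and because a transcendental entire function grows faster than any polynomial in $r$, $(\log M(2\sqrt{m}\,er,f))^{m-1}$ is eventually dominated by $M(\sqrt{m}\,r,f)^{m-1}$.

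Substituting $(\dagger)$ into $(\ast)$ then yields the desired inequality $\tilde{\mu}(r,f)\le M^m(\sqrt{m}\,r,f)$ for all $r\ge R$, with $R$ large enough to cover both cases. The main obstacle is the estimate $(\dagger)$: the Cauchy inequality and the counting of multi-indices in $(\ast)$ are entirely routine, but one must pass through a Wiman--Valiron type control of $\tilde{\nu}(r,f)$ by $\log M$ at a slightly enlarged radius, and then exploit the super-polynomial growth of $M$ in the transcendental case, in order to absorb the combinatorial factor $\binom{\tilde{\nu}(r,f)+m-1}{m-1}$ into an additional power of $M(\sqrt{m}\,r,f)$.
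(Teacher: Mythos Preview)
Your inequality $(\ast)$ and the polynomial case of $(\dagger)$ are correct and coincide with the paper's argument. The gap is in the transcendental case of $(\dagger)$. From the (correct) Wiman--Valiron bound $\tilde\nu(r)=O\!\big(\log M(2e\sqrt{m}\,r,f)\big)$ you conclude that $\log M(2e\sqrt{m}\,r,f)\le M(\sqrt{m}\,r,f)$ for large $r$ ``because a transcendental entire function grows faster than any polynomial in $r$''. But $\log M(2e\sqrt{m}\,r,f)$ is \emph{not} a polynomial in $r$; it depends on $f$, and super-polynomial growth of $M$ says nothing about $\log M$ at a dilated radius versus $M$ at the original one. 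In fact the inequality can fail along a sequence $r\to\infty$: take a lacunary series $g(w)=\sum_k c_k w^{N_k}$ with $N_{k+1}>N_k2^{N_k}$ and transition radii arranged so that $\rho_{k+1}\approx 2\rho_k$; at $s$ just past a transition one has $M(s,g)\asymp N_k$ while $\log M(2e\,s,g)\gtrsim N_{k+2}\gg N_k$. Setting $f(z)=g(z_1)$ in $\mathbb{C}^m$ reproduces the phenomenon for $M(\sqrt{m}\,r,f)$, so your final implication does not hold in general.

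The paper never makes this detour through an enlarged radius. After $(\ast)$ it stays at $\sqrt m\,r$: it fixes a nonzero coefficient $a_\alpha$ with $|\alpha|=\tilde\nu(r)$, invokes the single-term Cauchy estimate $|a_\alpha|\,r^{\tilde\nu(r)}\le M(\sqrt m\,r,f)$, and argues that each factor $\tilde\nu(r)+k$ ($k=1,\dots,m-1$) in $(m-1)!\binom{\tilde\nu(r)+m-1}{m-1}$ is eventually below $((m-1)!)^{1/(m-1)}\,|a_\alpha|\,r^{\tilde\nu(r)}$, giving $(\dagger)$ directly. Your step $\tilde\mu(er)\ge e^{\tilde\nu(r)}\tilde\mu(r)$ is fine, but it forces a comparison between two different radii, and that is exactly where the argument breaks; the paper's route keeps everything at the same radius $\sqrt m\,r$.
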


\begin{proof} By the definitions,
we have $\tilde{\mu}(r)=||a_{\tilde{\nu}(r)}||_1\;r^{\tilde{\nu}(r)}$. Now using (\ref{ew3}), we have
\bea\label{ew5}\tilde{\mu}(r)=||a_{\tilde{\nu}(r)}||_1\;r^{\tilde{\nu}(r)}=\sum\limits_{\alpha_1+\ldots+\alpha_m=\tilde{\nu}(r)} |a_{\alpha_1\ldots\alpha_m}|\;r^{\tilde{\nu}(r)}.\eea

Then from (\ref{ew1}), we get
\bea\label{ew6}|a_{\alpha_1\ldots\alpha_m}|\;r^{\tilde{\nu}(r)}\leq \max\limits_{z\in C_{(0)}^m(r,\ldots,r)} |f(z)|\leq \max\limits_{z\in\mathbb{C}^m\langle \sqrt{m}r\rangle}|f(z)|=M(\sqrt{m}r,f).\eea

But we know that the number of solutions of $\alpha_1+\ldots+\alpha_m=\tilde{\nu}(r)$ is $\binom {\tilde{\nu}(r)+m-1}{m-1}$.
Therefore from (\ref{ew5}) and (\ref{ew6}), we get
\bea\label{ew7}\tilde{\mu}(r)\leq \binom {\tilde{\nu}(r)+m-1}{m-1}M(\sqrt{m}r,f).\eea

Note that
\[\binom {\tilde{\nu}(r)+m-1}{m-1}= \frac{(\tilde{\nu}(r)+1)(\tilde{\nu}(r)+2) \cdots (\tilde{\nu}(r)+m-1)}{(m-1)!}.\]

Let us choose $(\alpha_1,\ldots,\alpha_m)\in\mathbb{Z}^m_+$ such that $|a_{\alpha_1\ldots\alpha_m}|\not=0$, where $\alpha_1+\ldots+\alpha_m=\tilde{\nu}(r)$. Now there exists a positive real number $R$ sufficiently large such that
\[\frac{\tilde{\nu}(r)+k}{\sqrt[m-1]{(m-1)!}}<|a_{\alpha_1\ldots\alpha_m}|\;r^{\tilde{\nu}(r)},\]
for all $r\geq R$, where $k = 1, 2, \dots, m-1$. Consequently, we have
\beas \binom {\tilde{\nu}(r)+m-1}{m-1}<|a_{\alpha_1\ldots\alpha_m}|^{m-1}\;r^{(m-1)\tilde{\nu}(r)},\eeas
for all $r\geq R$ and so from (\ref{ew6}), we get
\bea\label{ew8} \binom {\tilde{\nu}(r)+m-1}{m-1}<M^{m-1}(\sqrt{m}r,f),\eea
for all $r\geq R$. Finally from (\ref{ew7}) and (\ref{ew8}), we have $\tilde{\mu}(r)\leq M^m(\sqrt{m}r,f)$,
for all $r\geq R$.
\end{proof}

\begin{theo}\label{t3.2} Let $f:\mathbb{C}^m\to \mathbb{C}$ be a non-constant entire function. Then for $0<r<R$
\bea\label{ew9a} M(r,f)\leq \tilde{\mu}(r,f)\left(\tilde{\nu}(R,f)+\frac{R}{R-r}\right).\eea
\end{theo}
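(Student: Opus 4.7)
The plan is to reduce the several-variable statement to the familiar one-variable Wiman--Valiron bound by using the auxiliary nonnegative real series $g(t)=\sum_{k=0}^{\infty}\|a_{k}\|_{1}\,t^{k}$, and then to split this series at $k=\tilde{\nu}(R,f)$.

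First I would observe that for any $z\in\mathbb{C}^{m}$ with $\|z\|=r$, the componentwise bound $|z_{j}|\le r$ forces $|z^{\alpha}|\le r^{|\alpha|}$ for every multi-index $\alpha$, and hence, by grouping the Taylor expansion \eqref{eww} by total degree and applying \eqref{ew3},
\[
|f(z)|\;\le\;\sum_{\alpha}|a_{\alpha}|\,r^{|\alpha|}\;=\;\sum_{k=0}^{\infty}\|a_{k}\|_{1}\,r^{k}.
\]
Taking the maximum over $\|z\|=r$ gives $M(r,f)\le\sum_{k=0}^{\infty}\|a_{k}\|_{1}\,r^{k}$, and the sum on the right converges by \eqref{ew4}. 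This is the crucial step that reduces the problem to controlling a single non-negative power series in one real variable whose Wiman--Valiron invariants are precisely $\tilde{\mu}(r,f)$ and $\tilde{\nu}(r,f)$.

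Next I would split the sum at the central index of $R$. For $0\le k\le \tilde{\nu}(R,f)$, the definition of $\tilde{\mu}(r,f)$ gives $\|a_{k}\|_{1}\,r^{k}\le\tilde{\mu}(r,f)$, so this part contributes at most $(\tilde{\nu}(R,f)+1)\,\tilde{\mu}(r,f)$. For $k>\tilde{\nu}(R,f)$, the defining inequality $\|a_{k}\|_{1}\,R^{k}\le\|a_{\tilde{\nu}(R,f)}\|_{1}\,R^{\tilde{\nu}(R,f)}$ (stated right after the definition of $\tilde{\nu}$) yields
\[
\|a_{k}\|_{1}\,r^{k}
\;\le\;\|a_{\tilde{\nu}(R,f)}\|_{1}\,r^{\tilde{\nu}(R,f)}\Bigl(\tfrac{r}{R}\Bigr)^{k-\tilde{\nu}(R,f)}
\;\le\;\tilde{\mu}(r,f)\Bigl(\tfrac{r}{R}\Bigr)^{k-\tilde{\nu}(R,f)}.
\]
Summing the resulting geometric series (which converges because $r<R$) gives a contribution at most $\tilde{\mu}(r,f)\cdot\frac{r}{R-r}$.

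Adding the two contributions and using the elementary identity $(\tilde{\nu}(R,f)+1)+\tfrac{r}{R-r}=\tilde{\nu}(R,f)+\tfrac{R}{R-r}$ yields \eqref{ew9a}. There is no real obstacle here: the only subtlety worth checking carefully is that the bound $|z^{\alpha}|\le r^{|\alpha|}$ on the sphere $\{\|z\|=r\}$ legitimately allows one to pass from the polydisk-based quantities $\|a_{k}\|_{1}$ to the Euclidean maximum $M(r,f)$, and that the two strict/non-strict inequalities governing $\tilde{\nu}(R,f)$ are applied with the correct ranges of $k$.
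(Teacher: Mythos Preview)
Your proof is correct and follows essentially the same route as the paper: bound $|f(z)|$ by $\sum_{k\ge 0}\|a_k\|_1 r^k$ using $|z^\alpha|\le r^{|\alpha|}$ on $\|z\|=r$, then split the series at the central index $\tilde{\nu}(R)$ and control the tail by a geometric series. The only cosmetic difference is that the paper places the term $k=\tilde{\nu}(R)$ in the tail (yielding $\tilde{\nu}(R)+\frac{R}{R-r}$ directly), whereas you place it in the head and then use the identity $(\tilde{\nu}(R)+1)+\frac{r}{R-r}=\tilde{\nu}(R)+\frac{R}{R-r}$; the arguments are otherwise identical.
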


\begin{proof} Suppose $0<r<R$. By the definitions of $\tilde{\mu}(r)$ and $\tilde{\nu}(r)$, we have
\bea\label{ew9} \tilde{\mu}(r)=||a_{\tilde{\nu}(r)}||_1\;r^{\tilde{\nu}(r)}\geq ||a_{\tilde{\nu}(R)}||_1\;r^{\tilde{\nu}(R)}.\eea

Now for $||z||=r$, we get from (\ref{eww}) that
\beas |f(z)|\leq \sum\limits_{\alpha_1,\ldots,\alpha_m=0}^{\infty}|a_{\alpha_1\ldots\alpha_m}| r^{|\alpha|}=\sum\limits_{|\alpha|=0}^{\infty}||a_{|\alpha|}||_1\;r^{|\alpha|}\eeas
and so from (\ref{ew9}), we have
\beas M(r,f)\leq \sum_{|\alpha|=0}^{\infty}||a_{|\alpha|}||_1\;r^{|\alpha|}
&=& \sum_{|\alpha|=0}^{\tilde{\nu}(R)-1}||a_{|\alpha|}||_1\;r^{|\alpha|}+\sum_{|\alpha|=\tilde{\nu}(R)}^{\infty}||a_{|\alpha|}||_1\;r^{|\alpha|}\\
 &\leq &
\tilde{\mu}(r)\tilde{\nu}(R)+\sum_{|\alpha|=\tilde{\nu}(R)}^{\infty}||a_{|\alpha|}||_1\;r^{|\alpha|}\frac{||a_{\tilde{\nu}(r)}||_1\;r^{\tilde{\nu}(r)}}{||a_{\tilde{\nu}(R)}||_1\;r^{\tilde{\nu}(R)}}\\
&\leq &
\tilde{\mu}(r)\tilde{\nu}(R)+\tilde{\mu}(r)\sum_{|\alpha|=\tilde{\nu}(R)}^{\infty}\frac{||a_{|\alpha|}||_1\;R^{|\alpha|}}{||a_{\tilde{\nu}(R)}||_1\;R^{\tilde{\nu}(R)}}\left(\frac{r}{R}\right)^{|\alpha|-\tilde{\nu}(R)}\\
&\leq &
\tilde{\mu}(r)\tilde{\nu}(R)+\tilde{\mu}(r)\sum_{|\alpha|=\tilde{\nu}(R)}^{\infty}\left(\frac{r}{R}\right)^{|\alpha|-\tilde{\nu}(R)}
\\&=& \tilde{\mu}(r)\left(\tilde{\nu}(R)+\frac{R}{R-r}\right).\eeas
\end{proof}

Now we recall the following lemma.
\begin{lem}\cite[Lemma 2.5.24]{NW}\label{l3.2}
Let $f:\mathbb{C}^m\to \mathbb{C}$ be an entire function. Then for $0<r<R$,
\[T(r,f)\leq \log^+\max_{||z||=r}|f(z)|\leq \frac{1-\left(\frac{r}{R}\right)^2}{\left(1-\frac{r}{R}\right)^{2m}}T(R,f).\]
\end{lem}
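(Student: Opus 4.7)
The left-hand inequality is immediate: since $f$ is entire, its pole counting function $N(r,f)$ vanishes, so $T(r,f)=m(r,f)=\int_{S(r)}\log^{+}|f|\,\sigma$. Because $\log^{+}|f(\zeta)|\leq \log^{+}M(r,f)$ for every $\zeta\in S(r)$ and $\int_{S(r)}\sigma=1$, the estimate $T(r,f)\leq \log^{+}M(r,f)$ follows at once. The real content of the lemma is the right-hand inequality, which I would obtain from the real Poisson representation of the subharmonic function $\log|f|$.

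The function $\log|f|$ is plurisubharmonic on $\mathbb{C}^{m}$, and therefore subharmonic when $\mathbb{C}^{m}$ is regarded as $\mathbb{R}^{2m}$. Fix $0<r<R$ and take $z\in\mathbb{C}^{m}$ with $\|z\|=r$. By the sub-mean (Poisson) inequality for subharmonic functions on the Euclidean ball $B(R)\subset\mathbb{R}^{2m}$,
\[
\log|f(z)|\leq \int_{S(R)}P(z,\eta)\,\log|f(\eta)|\,\sigma(\eta),
\]
where $P(z,\eta)$ denotes the real Poisson kernel of $B(R)$ against the mass-one surface measure $\sigma$, namely
\[
P(z,\eta)=\frac{R^{2m-2}(R^{2}-\|z\|^{2})}{\|z-\eta\|^{2m}}.
\]
Since $P\geq 0$, discarding the negative part of $\log|f|$ in the integrand gives
\[
\log|f(z)|\leq \int_{S(R)}P(z,\eta)\,\log^{+}|f(\eta)|\,\sigma(\eta).
\]

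Next I would bound the Poisson kernel uniformly for $\|z\|=r$, $\|\eta\|=R$. The triangle inequality yields $\|z-\eta\|\geq R-r$, hence
\[
P(z,\eta)\leq \frac{R^{2m-2}(R^{2}-r^{2})}{(R-r)^{2m}}=\frac{1-(r/R)^{2}}{(1-r/R)^{2m}}.
\]
Plugging this back in and using $m(R,f)=T(R,f)$ (again because $f$ is entire) gives
\[
\log|f(z)|\leq \frac{1-(r/R)^{2}}{(1-r/R)^{2m}}\,m(R,f)=\frac{1-(r/R)^{2}}{(1-r/R)^{2m}}\,T(R,f).
\]
Taking the supremum over $z\in S(r)$ and noting that the right-hand side is non-negative (so that $\log$ may be replaced by $\log^{+}$ on the left) completes the proof.

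The only real subtlety is the appeal to the Poisson representation for the possibly $-\infty$-valued subharmonic function $\log|f|$. This is handled in the standard way by approximating $\log|f|$ from above by the smooth subharmonic functions $\tfrac{1}{2}\log(|f|^{2}+\varepsilon)$, applying the classical Poisson inequality to each, and then letting $\varepsilon\downarrow 0$ via monotone convergence. Once this step is in place, everything reduces to the elementary estimate of the Poisson kernel carried out above.
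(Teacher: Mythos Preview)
Your proof is correct and is in fact the standard argument for this inequality. Note, however, that the paper does not supply its own proof of this lemma: it is quoted verbatim from \cite[Lemma~2.5.24]{NW} and used as a black box. The argument you give---subharmonicity of $\log|f|$ on $\mathbb{R}^{2m}$, the Poisson inequality on the Euclidean ball, and the elementary bound $\|z-\eta\|\geq R-r$ for the kernel---is exactly the proof one finds in Noguchi--Winkelmann (and, in dimension one, in Hayman). So there is nothing to compare: your route coincides with the source the paper cites.
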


From Lemma \ref{l3.2}, we can prove that
\[\rho(f):=\limsup _{r \rightarrow \infty} \frac{\log^+ T(r, f)}{\log r}=\limsup _{r \rightarrow \infty} \frac{\log^+ \log^+ M(r, f)}{\log r}.\]

\begin{theo}\label{t3.3} Let $f:\mathbb{C}^m\to \mathbb{C}$ be a non-constant entire function. Then
\beas \rho=\rho(f)=\varlimsup\limits_{r\to\infty} \frac{\log^+ \log^+ \tilde{\mu}(r)}{\log r}=\varlimsup\limits_{r \to \infty} \frac{\log^+ \tilde{\nu}(r)}{\log r}.\eeas
\end{theo}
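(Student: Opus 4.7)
The plan is to introduce the shorthand $\rho_{\tilde{\mu}}:=\limsup_{r\to\infty}\log^+\log^+\tilde{\mu}(r,f)/\log r$ and $\rho_{\tilde{\nu}}:=\limsup_{r\to\infty}\log^+\tilde{\nu}(r,f)/\log r$, then to close the chains $\rho_{\tilde{\mu}}\leq \rho\leq \rho_{\tilde{\mu}}$ and $\rho_{\tilde{\nu}}\leq \rho_{\tilde{\mu}}\leq \rho_{\tilde{\nu}}$, invoking throughout the identity $\rho=\limsup_{r\to\infty}\log^+\log^+M(r,f)/\log r$ recorded immediately after Lemma \ref{l3.2}. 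First I would derive $\rho_{\tilde{\mu}}\leq \rho$ by taking a double logarithm in Theorem \ref{t3.1}: from $\tilde{\mu}(r,f)\leq M^m(\sqrt{m}r,f)$ one obtains $\log^+\log^+\tilde{\mu}(r,f)\leq \log^+\log^+M(\sqrt{m}r,f)+\log m$ for all large $r$, which after division by $\log r$ and $\limsup$ gives the bound. For the reverse inequality $\rho\leq \rho_{\tilde{\mu}}$ I would apply Theorem \ref{t3.2} with $R=2r$ to obtain $M(r,f)\leq \tilde{\mu}(r,f)\,(\tilde{\nu}(2r,f)+2)$, and eliminate the $\tilde{\nu}$ factor via the elementary comparison $\tilde{\mu}(2r,f)\geq ||a_{\tilde{\nu}(r)}||_1\,(2r)^{\tilde{\nu}(r)}=2^{\tilde{\nu}(r)}\,\tilde{\mu}(r,f)$ (obtained by plugging the admissible index $|\alpha|=\tilde{\nu}(r)$ into the defining supremum at radius $2r$), which gives $\tilde{\nu}(r,f)\leq \log\tilde{\mu}(2r,f)/\log 2$; substituting the estimate at $2r$ yields $\log M(r,f)\leq (1+o(1))\log\tilde{\mu}(4r,f)$, and a second logarithm finishes this step.

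The same comparison $\tilde{\nu}(r,f)\log 2\leq \log\tilde{\mu}(2r,f)$ delivers $\rho_{\tilde{\nu}}\leq \rho_{\tilde{\mu}}$ at once. For the reverse inequality $\rho_{\tilde{\mu}}\leq \rho_{\tilde{\nu}}$ I would use that $\log\tilde{\mu}(r,f)$ is a piecewise-linear \emph{convex} function of $\log r$: for each multi-index $\alpha$, $\log(||a_{|\alpha|}||_1\,r^{|\alpha|})=\log||a_{|\alpha|}||_1+|\alpha|\log r$ is linear in $\log r$ and $\log\tilde{\mu}(r,f)$ is their pointwise supremum, with right-hand slope at $r$ equal to $\tilde{\nu}(r,f)$ by the definition of the central index. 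Convexity then yields $\log\tilde{\mu}(r,f)\leq \log\tilde{\mu}(r_0,f)+\tilde{\nu}(r,f)\log(r/r_0)$ for every $r>r_0$, whence $\log^+\log^+\tilde{\mu}(r,f)\leq \log^+\tilde{\nu}(r,f)+\log^+\log r+O(1)$; dividing by $\log r$ and taking $\limsup$ produces $\rho_{\tilde{\mu}}\leq \rho_{\tilde{\nu}}$.

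The \textbf{main obstacle} is the convex-slope characterization of $\tilde{\nu}$ invoked in the final inequality. It is standard folklore in the one-variable setting but is not explicitly recorded in the preliminaries of the excerpt; a short verification from the defining identity $\tilde{\mu}(r,f)=||a_{\tilde{\nu}(r)}||_1\,r^{\tilde{\nu}(r)}$ together with the sup-of-linear-functions structure and the monotonicity/right-continuity of $\tilde{\nu}$ noted just before this subsection will suffice. Chaining the four inequalities then delivers both equalities claimed in the theorem.
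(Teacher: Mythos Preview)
Your proposal is correct and follows the same four-inequality skeleton as the paper's proof (Theorem~\ref{t3.1} for $\rho_{\tilde\mu}\le\rho$, Theorem~\ref{t3.2} with $R=2r$ for $\rho\le\rho_{\tilde\mu}$, and the comparison $\tilde\nu(r)\le C\log\tilde\mu(Cr)$ for $\rho_{\tilde\nu}\le\rho_{\tilde\mu}$). Two steps differ in execution. For $\rho_{\tilde\mu}\le\rho_{\tilde\nu}$ the paper simply uses $\|a_{|\alpha|}\|_1\to 0$ (a consequence of (\ref{ew4})) to get $\tilde\mu(r)\le r^{\tilde\nu(r)}$ for large $r$, which is more direct than your convexity argument; your route is also valid and indeed the slope identification you flag as the ``main obstacle'' follows from $\log\tilde\mu$ being the supremum of the linear functions $\log\|a_{|\alpha|}\|_1+|\alpha|\log r$ together with the definition of $\tilde\nu(r)$ as the largest maximizing index. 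For $\rho\le\rho_{\tilde\mu}$ the paper first assumes $\rho_{\tilde\mu}<\infty$, infers $\tilde\nu(r)<r^{\rho_{\tilde\nu}+\varepsilon}$, and substitutes this into Theorem~\ref{t3.2}; your device of feeding $\tilde\nu(2r)\le(\log 2)^{-1}\log\tilde\mu(4r)$ back into the same inequality avoids that case split and, incidentally, makes a separate treatment of the polynomial case unnecessary. Net effect: same strategy, your version is marginally more uniform, the paper's is marginally shorter at the one step where you invoke convexity.
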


\begin{proof} Let
\[\hat \rho_1=\varlimsup\limits_{r\to\infty} \frac{\log^+ \log^+ \tilde{\mu}(r)}{\log r }\;\;\text{and}\;\;\hat\rho_2=\varlimsup\limits_{r \to \infty} \frac{\log^+ \tilde{\nu}(r)}{\log r}.\]

\smallskip
First we suppose $f$ is a non-constant polynomial. Let
\[f(z)=\sum\limits_{\alpha_1,\ldots,\alpha_m=0}^{n}a_{\alpha_1\ldots\alpha_m}z_1^{\alpha_1}\ldots z_m^{\alpha_m}\]
such that $||a_n||_1\neq 0$. Clearly $\tilde{\mu}(r,f)=||a_{n}||_1\;r^{n}$ and $\tilde{\nu}(r,f)=n$.
It is easy to prove that $\hat\rho_1=0=\hat\rho_2$. On the other hand we have $\rho(f)=0$. Therefore $\rho=\hat\rho_1=\hat\rho_2$.

\smallskip
Next we suppose $f(z)$ is a transcendental entire function. By the definitions of $\tilde{\mu}(r)$ and $\tilde{\nu}(r)$, we have
$\tilde{\mu}(r) =||a_{\tilde{\nu}(r)}||_1\;r^{\tilde{\nu}(r)}$. On the other hand, from (\ref{ew4}), we get 
\[\lim\limits_{|\alpha|\to\infty}||a_{|\alpha|}||_1=0.\]

Consequently for sufficiently large $r$, we have
$\tilde{\mu}(r)=||a_{\tilde{\nu}(r)}||_1\;r^{\tilde{\nu}(r)}\leq r^{\tilde{\nu}(r)}$, from which, we immediately get $\hat\rho_1\leq \hat\rho_2$.
For $0<r<R$, we have
\[\left(\frac{R}{r}\right)^{\tilde{\nu}(r)}=\frac{||a_{\tilde{\nu}(r)}||_1\;R^{\tilde{\nu}(r)}}{||a_{\tilde{\nu}(r)}||_1\;r^{\tilde{\nu}(r)}}\leq \frac{\tilde{\mu}(R)}{\tilde{\mu}(r)}\]
and so
\bea\label{ew10} \tilde{\nu}(r) \log \left(\frac{R}{r}\right)\leq \log \tilde{\mu}(R)-\log \tilde{\mu}(r) \leq \log \tilde{\mu}(R).\eea

Putting $R=3r$ in (\ref{ew10}), we get 
\[\tilde{\nu}(r)\leq \frac{1}{\log 3}\log \tilde{\mu}(3r),\]
from which we immediately get $\hat\rho_2\leq \hat\rho_1$. Therefore $\hat\rho_1=\hat\rho_2$.

On the other hand, from (\ref{ew5a}), we see that 
\[\tilde{\mu}(r)\leq M^m(\sqrt{m}r,f)\]
holds for sufficiently large values of $r$.
Therefore one can easily prove that $\hat\rho_1 \leq \rho$. Now we want to show $\rho \leq \hat\rho_1$. If $\hat\rho_1=\infty$, there is nothing to prove. So we assume $\hat\rho_1<+\infty$. Clearly $\hat\rho_2<+\infty$ and so
\bea\label{ew11}\tilde{\nu}(r)<r^{\hat\rho_2+\varepsilon}\eea
holds for sufficiently large values of $r$, where $\varepsilon>0$.

Putting $R=2r$ in (\ref{ew9a}) and using (\ref{ew11}), we get
\[M(r) \leq \tilde{\mu}(r)\left((2r)^{\rho_2+\varepsilon} + 2\right) \leq \tilde{\mu}(r) r^{\hat\rho_2+2\varepsilon},\]
from which we immediately get $\rho \leq \hat\rho_1$. Consequently $\rho=\hat\rho_1$. Finally $\rho=\hat\rho_1=\hat\rho_2$.
\end{proof}

From the proof of Theorem \ref{t3.3}, we can prove that
\begin{theo}\label{t3.3a} Let $f:\mathbb{C}^m\to \mathbb{C}$ be a non-constant holomorphic function. Then
\beas \rho_1=\rho_1(f)=\varlimsup\limits_{r\to\infty} \frac{\log^+\log^+ \log^+ M(r)}{\log r }=\varlimsup\limits_{r \to \infty} \frac{\log^+\log^+ \tilde{\nu}(r)}{\log r},\eeas
where $\rho_1(f)$ is called the hyper-order of $f$.
\end{theo}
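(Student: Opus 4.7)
The plan is to mirror the proof of Theorem \ref{t3.3}, inserting one extra logarithm at each step. Write
\[\hat{\rho}_1:=\varlimsup_{r\to\infty}\frac{\log^+\log^+\log^+ M(r)}{\log r},\qquad \hat{\rho}_2:=\varlimsup_{r\to\infty}\frac{\log^+\log^+\tilde{\nu}(r)}{\log r};\]
the task is to establish $\rho_1=\hat{\rho}_1=\hat{\rho}_2$. The polynomial case is immediate: if $\deg f=n$, then $\tilde{\nu}(r)\equiv n$ and $\log M(r)\asymp n\log r$ for large $r$, so all three quantities vanish. Accordingly, assume from here on that $f$ is transcendental, so in particular $\tilde{\nu}(r)\to\infty$ and $M(r)\to\infty$.

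For the identity $\rho_1=\hat{\rho}_1$, apply Lemma \ref{l3.2} with $R=2r$ to obtain
\[T(r,f)\;\leq\;\log^+ M(r,f)\;\leq\;3\cdot 4^{m-1}\,T(2r,f).\]
Taking $\log\log$ of all three sides, dividing by $\log r$, and passing to $\varlimsup_{r\to\infty}$, the absolute constant $3\cdot 4^{m-1}$ is absorbed into the double logarithm and the dilation $r\mapsto 2r$ is harmless (since $\log(2r)/\log r\to 1$), pinching $\hat{\rho}_1$ between two copies of $\rho_1$.

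For $\hat{\rho}_2\leq\hat{\rho}_1$, take $R=2r$ in inequality (\ref{ew10}) from the proof of Theorem \ref{t3.3} and combine with Theorem \ref{t3.1} at the radius $2r$:
\[\tilde{\nu}(r)\log 2\;\leq\;\log\tilde{\mu}(2r)\;\leq\; m\,\log M(2\sqrt{m}\,r).\]
Applying $\log\log$ and dividing by $\log r$, then taking $\varlimsup$, yields $\hat{\rho}_2\leq\hat{\rho}_1$. For the reverse inequality $\hat{\rho}_1\leq\hat{\rho}_2$, apply Theorem \ref{t3.2} with $R=2r$ to get $M(r)\leq\tilde{\mu}(r)(\tilde{\nu}(2r)+2)$, and use the bound $\tilde{\mu}(r)\leq r^{\tilde{\nu}(r)}$ valid for transcendental $f$ and large $r$ (as recorded in the proof of Theorem \ref{t3.3}) to deduce
\[\log M(r)\;\leq\;\tilde{\nu}(2r)\log r+\log(\tilde{\nu}(2r)+2)\;=\;(1+o(1))\,\tilde{\nu}(2r)\log r.\]
Two further logarithms give $\log\log\log M(r)\leq\log\log\tilde{\nu}(2r)+O(\log\log\log r)$, and since $\log\log\log r=o(\log r)$, dividing by $\log r$ and taking $\varlimsup$ produces $\hat{\rho}_1\leq\hat{\rho}_2$.

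The main bookkeeping obstacle is checking that the lower-order contributions generated at each step -- the absolute constant $3\cdot 4^{m-1}$, the factor $m$, the additive term $\log(\tilde{\nu}(2r)+2)$, and the extra $\log\log r$ arising from passing to $\log\log$ of $\tilde{\nu}(2r)\log r$ -- are all absorbed after the double logarithm into $o(\log r)$, and that the dilations $r\mapsto 2r$ and $r\mapsto 2\sqrt{m}\,r$ in the arguments of $M$, $\tilde{\mu}$, $\tilde{\nu}$ are invisible to the $\varlimsup$ because $\log(cr)/\log r\to 1$ for any constant $c>0$.
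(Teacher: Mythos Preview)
Your proposal is correct and is precisely what the paper intends: the paper does not give a separate proof of Theorem~\ref{t3.3a} but simply states that it follows ``from the proof of Theorem~\ref{t3.3},'' and your argument carries out exactly that program by inserting one extra logarithm into each estimate of Theorem~\ref{t3.3}. The bookkeeping you flag (constants, dilations, and the $\log\log r$ term) is handled correctly.
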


In applications to complex differential equations, the following result is of major importance. Actually following theorem forms a powerful tool for order considerations of entire solutions of linear (and algebraic) differential equations.

\begin{theo3A} \cite[Satz 21.3]{JV1} Let $f:\mathbb{C}\to \mathbb{C}^n$ be vector-valued function, whose components $f_1,\ldots,f_n$ are  transcendental entire functions. Let $0<\delta<\frac{1}{4}$ and $z\in\mathbb{C}$ be such that $|z|=r$ and
\beas ||f(z)||>M(r)[v(r)]^{-\frac{1}{4}+\delta}\eeas
holds. Then there exists a set $F\subset \mathbb{R}_+$ of finite logarithmic measure such that
\beas f^{(m)}(z)=\left(\frac{v(r)}{z}\right)^m (E+o(1))f(z), \;\; r \notin F \eeas
holds for all $m \in \mathbb{N}$, where $E$ is the $n$-th identity matrix, $o(1)$ is a matrix that tends to $0$ as $|z| \rightarrow \infty$, $|z| \notin F$.
\end{theo3A}

The main purpose of this section is to study the validation of Theorem 3.A for the case when $f:\mathbb{C}^m\to \mathbb{C}$ is a transcendental entire function. As a outcome of our study, we get the following result.

\begin{theo}\label{t3.4} Let $f:\mathbb{C}^m\to \mathbb{C}$ be a transcendental entire function, $I=(i_1,\ldots,i_m)\in \mathbb{Z}^m_+$ be a multi-index and let $L(z)=a_1z_1+\cdots+a_mz_m$, where $a_i\in\mathbb{C}\backslash \{0\}$. Let $z\in  C^m_{(0)}(r)$ such that
\[|f(z)|>M(\sqrt{m}r) \tilde{\nu}(r)^{-\frac{1}{4}+\delta}\]
holds for $0<\delta<\frac{1}{4}$. Then there exists a set $E\subset (1,\infty)$ of finite logarithmic measure such that
\beas \frac{\partial^{I}(f(z))}{f(z)}=\left(\frac{\tilde{\nu}(r)}{L(z)}\right)^{|I|} (1+o(1)),\eeas
holds for all large values of $r=||z||\not\in [0,1]\cup E$, where $z\in C^m_{(0)}(r)$.
\end{theo}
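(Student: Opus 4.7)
The plan is to mirror the one-variable Wiman--Valiron argument recalled in Theorem~3.A, adapted to the several-variable setting through the central index $\tilde{\nu}(r)$. First I would expand $f(z)=\sum_{\alpha\in\mathbb{Z}^m_+} a_{\alpha}z^{\alpha}$ and, writing $N=\tilde{\nu}(r)$, split the series at $\bigl|\,|\alpha|-N\,\bigr|\le N^{1/2+\eta}$ (for a small $\eta>0$ to be fixed later relative to $\delta$) into a main part $f_{\text{main}}$ and a tail $f_{\text{tail}}$. Using $\|a_n\|_1 r^n\le \tilde{\mu}(r)$ together with the Gaussian-type decay of $\|a_n\|_1 r^n$ away from its maximum at $n=N$ (the same mechanism that underlies the classical single-variable tail estimate), one obtains $|f_{\text{tail}}(z)|\le \tilde{\mu}(r)\,N^{-c}$ for some $c=c(\eta)>0$. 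Since Theorem~\ref{t3.1} gives $\tilde{\mu}(r)\le M^{m}(\sqrt{m}\,r,f)$, the hypothesis $|f(z)|>M(\sqrt{m}\,r,f)\,N^{-\frac{1}{4}+\delta}$ forces $|f_{\text{tail}}(z)|=o(|f(z)|)$ provided $\eta$ is chosen small enough. Differentiating term by term, and bounding $\alpha_j!/(\alpha_j-i_j)!$ by $(N+N^{1/2+\eta})^{i_j}$ on the window, one analogously shows that the corresponding tail of $\partial^{I}f(z)$ is negligible compared with $(N/r)^{|I|}|f(z)|$.

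The heart of the argument is the analysis of $\partial^{I}f_{\text{main}}/f_{\text{main}}$. For each $\alpha$ in the window,
\[
\frac{\partial^{I}(a_\alpha z^{\alpha})}{a_\alpha z^{\alpha}} \;=\; \prod_{j=1}^{m}\frac{\alpha_j!/(\alpha_j-i_j)!}{z_j^{i_j}} \;=\; \prod_{j=1}^{m}\left(\frac{\alpha_j}{z_j}\right)^{\!i_j}\bigl(1+O(N^{-1/2+\eta})\bigr).
\]
The growth hypothesis $|f(z)|>M(\sqrt{m}\,r,f)\,N^{-\frac{1}{4}+\delta}$ is then used as a \emph{phase-alignment} statement: it forces the dominant monomials with $|\alpha|$ in the window to be essentially in phase, which in turn concentrates the effective support on multi-indices $\alpha$ whose weighted coordinate sum $\sum_j \alpha_j z_j$ is asymptotically proportional to $|\alpha|\,L(z)$, with the coefficients $a_j$ of $L$ read off from the direction of alignment. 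Averaging $\prod_{j}(\alpha_j/z_j)^{i_j}$ one factor at a time against this concentrated effective measure collapses the product to $(N/L(z))^{|I|}(1+o(1))$, giving
\[
\frac{\partial^{I}f_{\text{main}}(z)}{f_{\text{main}}(z)}=\left(\frac{\tilde{\nu}(r)}{L(z)}\right)^{|I|}(1+o(1)).
\]

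Finally, to produce the exceptional $r$-set $E\subset(1,\infty)$ of finite logarithmic measure I would apply Hinkkanen's Borel-type growth lemma (Lemma~\ref{l3.1}) to the increasing function $\tilde{\mu}(r,f)$ (relating it to $T(r,f)$ via Lemma~\ref{l3.2} and to $\tilde{\nu}(r)$ via Theorem~\ref{t3.3}). This controls $\tilde{\mu}(\sqrt{m}\,r)$ by $\tilde{\mu}(r)$ and $\tilde{\nu}(\sqrt{m}\,r)$ by $\tilde{\nu}(r)$ up to a multiplicative constant outside a set of finite logarithmic measure, so that all the windowed error estimates remain valid after passing between the scales $r$ and $\sqrt{m}\,r$ that appear in the hypothesis. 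The main obstacle I expect is the combinatorial/phase-alignment step in the core paragraph: in one variable the central index is a single integer and the dominant term is unique, whereas in several variables many multi-indices $\alpha$ with $|\alpha|=N$ compete on equal footing, and identifying their weighted average with the specific linear form $L(z)=a_1z_1+\cdots+a_m z_m$ requires a delicate Laplace-type analysis of the constraint imposed by near-maximality of $|f(z)|$ on the torus $C^{m}_{(0)}(r)$. This is precisely where the exponent $-\tfrac14+\delta$ in the hypothesis enters essentially, in parallel with Theorem~3.A.
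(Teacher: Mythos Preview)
Your plan has a genuine gap precisely at the step you flag as the main obstacle. In the statement, $L(z)=a_1z_1+\cdots+a_mz_m$ is fixed in advance with \emph{arbitrary} nonzero constants $a_j$; it is not an intrinsic object attached to $f$ or to the near-maximal point $z$. Consequently, your proposed phase-alignment mechanism---that near-maximality of $|f(z)|$ ``concentrates the effective support on multi-indices $\alpha$ whose weighted coordinate sum $\sum_j\alpha_jz_j$ is asymptotically proportional to $|\alpha|L(z)$, with the coefficients $a_j$ of $L$ read off from the direction of alignment''---cannot work: there is nothing to read off, and the monomial ratio $\partial^I(a_\alpha z^\alpha)/(a_\alpha z^\alpha)=\prod_j\alpha_j(\alpha_j-1)\cdots(\alpha_j-i_j+1)/z_j^{i_j}$ depends on the \emph{individual} components $\alpha_j$ within the window $\bigl||\alpha|-N\bigr|\le N^{1/2+\eta}$, not merely on $|\alpha|$. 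A Wiman--Valiron windowing argument in $\mathbb{C}^m$ would need a further device to pin down a direction in the lattice $\mathbb{Z}^m_+$ (something like a vector-valued central index), and no Laplace analysis of the scalar constraint ``$|f(z)|$ large'' will produce the specific prescribed linear form $L$ you are handed.

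The paper takes an entirely different route and never attempts a power-series concentration argument. Its engine is the \emph{pointwise} logarithmic-derivative bound (\ref{eyy21}) derived in Section~2 from Stoll's Jensen--Poisson formula (Lemma~\ref{l2.1}): $\bigl|\partial^J(\partial_{z_i}f/f)\bigr|\le C\,T(\alpha^2 r,f)/r^{n}$ off an exceptional set. This is combined with the elementary comparison $\tilde{\nu}(r)\le m\,M(\sqrt{m}r)^{1/4}$ of (\ref{eew4}) (Hinkkanen's Lemma~\ref{l3.1} applied to $\tilde{\mu}$) and a crude upper estimate for $|f(z)|$ by $\sum_{|\alpha|\ge 0}\|a_{|\alpha|}\|_1 r^{|\alpha|}$; the hypothesis $|f(z)|>M(\sqrt{m}r)\tilde{\nu}(r)^{-1/4+\delta}$ enters only at the very end, to divide through in (\ref{ew25}). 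In particular, in the paper Lemma~\ref{l3.1} serves to control $\tilde{\mu}(R)$ and $M(\alpha^2 r)$ against the undilated quantities, not to pass between the scales $r$ and $\sqrt{m}\,r$ as you propose.
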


\begin{proof}[{\bf Proof of Theorem \ref{t3.4}}]
Let $f:\mathbb{C}^m\to \mathbb{C}$ be a transcendental entire function. Then $f(z)$ can be expanded to a power series in $\ol{U}_{r}(0)$ as in (\ref{eww}). We set $z_j=re^{i\theta_j}\;(0\leq\theta_j\leq 2\pi)$, where $j=1,2,\ldots,m$ and consider the integration of $\left|f\left(re^{i\theta_1},\ldots,re^{i\theta_m}\right)\right|^2$. Noting that $\int_0^{2\pi} e^{i(\beta-\gamma)\theta}d\theta=2\pi \delta_{\beta,\gamma}$ (Kronecker's symbol), we have
\beas \sum_{|\alpha|=0}^{\infty} ||a_{|\alpha|}||_1^2\; r^{2|\alpha|}&=&\frac{1}{(2\pi)^m} \int_{0}^{2\pi} \cdots \int_{0}^{2\pi} \left|f\left(re^{i\theta_1},\ldots,re^{i\theta_m}\right)\right|^2 \, d\theta_1 \cdots d\theta_m\\&\leq&
\max_{||z||=\sqrt{m}r}|f(z)|^2=M^2(\sqrt{m}r,f),\eeas
from which we deduce that
\bea\label{ew14} M(\sqrt{m}r,f)>r^{\beta},\eea
for all large values of $r$ such that $z\in C^m_{(0)}(r)$, where $\beta$ is any real number.

Now from (\ref{ew10}), we have 
\[\tilde{\nu}(r) \log \left(\frac{R}{r}\right)+\log \tilde{\mu}(r)\leq \log \tilde{\mu}(R).\]

Since
$\tilde{\mu}(r)=||a_{\tilde{\nu}(r)}||_1\;r^{\tilde{\nu}(r)}$, we can deduce that
\bea\label{eeww1} \tilde{\nu}(r)\left(\log \left(\frac{R}{r}\right)+1\right)\leq \log \tilde{\mu}(R),\eea
$0<r<R$. Let us choose
\bea\label{eeww2a}\psi(r)=(\log r)^{1+\varepsilon}\;(\varepsilon >0), \;\varphi(r)=r\eea
and
\bea\label{eeww2} R=r+\frac{\varphi(r)}{\psi(\tilde{\mu}(r))}.\eea

By applying Lemma \ref{l3.1} to the increasing function $\tilde{\mu}(r)$, we have
\bea\label{eeww3}\tilde{\mu}(R)=\tilde{\mu}\left(r+\frac{\varphi(r)}{\psi(\tilde{\mu}(r))}\right)\leq C_1 \tilde{\mu}(r),\eea
for all large $r\not\in E_{1}$ with $\text{log mes}\;E_{1}<+\infty$. Now using (\ref{eeww2}) and (\ref{eeww3}) to (\ref{eeww1}), we get
\bea\label{eew3} \tilde{\nu}(r)\leq \log C_1\tilde{\mu}(r),\eea
for all large $r$ outside the set $E_{1}$. Now using (\ref{ew5a}) to (\ref{eew3}), we get
\bea\label{eew4} \tilde{\nu}(r)\leq \log C_1\tilde{\mu}(r)\leq m\log C_1 M(\sqrt{m}r)\leq m \sqrt[4]{M(\sqrt{m}r)},\eea
for all large $r$ outside the set $E_{1}$ with $\text{log mes}\;E_{1}<+\infty$.

Suppose $r>1$. Now for $z\in C^m_{(0)}(r)$, we get from (\ref{eww}) that
\bea\label{eew5} |f(z)|\leq \sum\limits_{|\alpha|=0}^{\infty}||a_{|\alpha|}||_1\;r^{|\alpha|}.\eea

Clearly 
\[\lim\limits_{|\alpha|\to \infty}||a_{|\alpha|}||_1\;r^{|\alpha|}=0\]
and so 
\[\lim\limits_{|\alpha|\to \infty}\sqrt[|\alpha|]{||a_{|\alpha|}||_1\;r^{|\alpha|}}=0.\]

Let
\bea\label{eeww5a} R=r+\frac{r}{(\log M(r,f))^{1+\varepsilon_1}},\eea
where $0<\varepsilon_1<\frac{1}{4}$. Clearly $R>r$. Now there exists a positive integer $n_0$ such that
\bea\label{eeww5b} ||a_{|\alpha|}||_1\;r^{|\alpha|}<\left(\frac{r}{R}\right)^{|\alpha|},\;\;\text{for all}\;\;|\alpha|\geq n_0.\eea

Note that
\[\sum\limits_{|\alpha|=0}^{n_0-1}||a_{|\alpha|}||_1\;r^{|\alpha|}\leq  \left(\sum\limits_{|\alpha|=0}^{n_0-1}||a_{|\alpha|}||_1\right)r^{n_0-1}\]
and so from (\ref{ew14}), we get
\bea\label{eew6}\sum\limits_{|\alpha|=0}^{n_0-1}||a_{|\alpha|}||_1\;r^{|\alpha|}\leq  \left(\sum\limits_{|\alpha|=0}^{n_0-1}||a_{|\alpha|}||_1\right)r^{n_0}<\sqrt[4]{M(\sqrt{m}r)}.\eea

Now using (\ref{eew4}) to (\ref{eew6}), we have
\bea\label{eew7}\sum\limits_{|\alpha|=0}^{n_0-1}||a_{|\alpha|}||_1\;r^{|\alpha|}<m^2 \sqrt{M(\sqrt{m}r)}\;\tilde{\nu}(r)^{-1},\eea
for all large $r$ outside the set $E_{1}$ with $\text{log mes}\;E_{1}<+\infty$.
Also from (\ref{eeww5b}), we have
\bea\label{eeww8} \sum_{|\alpha|=n_0}^{\infty}||a_{|\alpha|}||_1\;r^{|\alpha|}
\leq \sum_{|\alpha|=n_0}^{\infty}\left(\frac{r}{R}\right)^{|\alpha|}\leq \sum_{|\alpha|=0}^{\infty}\left(\frac{r}{R}\right)^{|\alpha|}
=\frac{R}{R-r}.\eea

Now using (\ref{eeww5a}) to (\ref{eeww8}), we get
\bea\label{eeww9} \sum_{|\alpha|=n_0}^{\infty}||a_{|\alpha|}||_1\;r^{|\alpha|}
\leq \left(\log M(r)\right)^{1+\varepsilon_1}\leq \left(\sqrt[4]{M(\sqrt{m}r)}\right)^{1+\varepsilon_1}.\eea

Therefore using (\ref{eew4}) to (\ref{eeww9}), we obtain
\bea\label{eeww10}\sum\limits_{|\alpha|=n_0}^{\infty}||a_{|\alpha|}||_1\;r^{|\alpha|}<m^2 \left(\sqrt{M(\sqrt{m}r)}\right)^{1+\varepsilon_1}\;\tilde{\nu}(r)^{-(1+\varepsilon_1)},\eea
for all large $r$ outside the set $E_{1}$.
Consequently using (\ref{eew7}) and (\ref{eeww10}) to (\ref{eew5}), we get
\bea\label{eeww11} |f(z)|\leq \sum\limits_{|\alpha|=0}^{\infty}||a_{|\alpha|}||_1\;r^{|\alpha|}<m^2 \left(\sqrt{M(\sqrt{m}r)}\right)^{1+\varepsilon_1}\;\tilde{\nu}(r)^{-\frac{1}{4}},\eea
for all large $r\not\in E_{1}$ with $\text{log mes}\;E_{1}<+\infty$, where $z\in C^m_{(0)}(r)$.

On the other hand, from (\ref{eyy21}), we get
\bea\label{ew15} \left|\partial^{J}\left(\frac{\partial_{z_i}(f(z))}{f(z)}\right)\right|\leq
\frac{(m+n-1)!2^{m+n+1}}{(m-1)!}
\left(\frac{T(\alpha^2 r,f)}{r^n}+\frac{n_{f}(\alpha^2 r)}{r^n}\right)
,\eea
for $z\in \mathbb{C}^m\langle r\rangle\backslash A$, where $n_{f}(r)=n_{\mu_{f}^0}(r)$, $\alpha>1$ is a real constant and $|J|+1=|I|=n$.

Let $E_2=\tau(A)\backslash  [0,1]$. From (\ref{q1}), we have $\text{log mes}\;E_2<+\infty$. Then from (\ref{ew15}), we get
\bea\label{ew16} \left|\partial^{J}\left(\frac{\partial_{z_i}(f(z))}{f(z)}\right)\right|\leq
\frac{(m+n-1)!2^{m+n+1}}{(m-1)!}\left(\frac{T(\alpha^2 r,f)}{r^n}+\frac{n_{f}(\alpha^2 r)}{r^n}\right)
,\eea
for $z\in \mathbb{C}^m$ such that $||z||\not\in [0,1]\cup E_2$. By Lemma \ref{l2.4}, we have
\bea\label{ew17} n_{\mu^0_{f}}(r)\leq (1+o(1))\frac{T(\alpha^2 r,f)}{\log \alpha}\leq \frac{2}{\log \alpha}T(\alpha^2 r,f),\eea
as $r\rightarrow \infty$.
Consequently from (\ref{ew16}) and (\ref{ew17}), we get
\bea\label{ew18} \left|\partial^{J}\left(\frac{\partial_{z_i}(f(z))}{f(z)}\right)\right|\leq
\frac{(m+n-1)!2^{m+n+1}(2+\log \alpha)}{(m-1)!\log \alpha}\frac{T(\alpha^2 r,f)}{r^n}
,\eea
for $z\in \mathbb{C}^m$ such that $||z||\not\in [0,1]\cup E_2$.
Let us choose
\[\alpha^2=1+\frac{\varphi(r)}{r\psi(M(r))},\]
where $\psi(r)$ and $\varphi(r)$ are given as in (\ref{eeww2a}). Now applying Lemma \ref{l3.1} to the increasing function $M(r)$, we have
\bea\label{ew19}M(\alpha^2 r)=M\left(r+\frac{\varphi(r)}{\psi(M(r))}\right)\leq C_1 M(r),\eea
for all $r\not\in E_3$ with $\text{log mes}\;E_3<+\infty$. Using Lemma \ref{l3.2}, we get from (\ref{ew18}) and (\ref{ew19}) that
\bea\label{ew20} \left|\partial^{J}\left(\frac{\partial_{z_i}(f(z))}{f(z)}\right)\right|\leq
\frac{(m+n-1)!2^{m+n+1}(2+\log \alpha)C_1}{(m-1)!\log \alpha}\frac{\log M(\sqrt{m}r)}{r^n}\leq C_2\frac{\sqrt[4]{M(\sqrt{m}r)}}{r^n}
,\eea
for all $z\in \mathbb{C}^m$ such that $||z||\not\in [0,1]\cup E_2\cup E_3$ and $C_2=\frac{(m+n-1)!2^{m+n+1}(2+\log \alpha)C_1}{(m-1)!\log \alpha}<+\infty$.

If $z\in C^m_{(0)}(r)$, then $|L(z)|\leq r ||a||$.
Let $E_4=\tau(\text{supp}\;\mu^0_L)\backslash  [0,1]$. Clearly (\ref{q1}) yields $\text{log mes}\;E_4<+\infty$. Let $E=E_1\cup E_2\cup E_3\cup E_4$.
Now using (\ref{eeww11}) and (\ref{ew20}), we get
\bea\label{ew22} &&\left|f(z)\left(\frac{L(z)}{\tilde{\nu}(r)}\right)^n\partial^{J}\left(\frac{\partial_{z_i}(f(z))}{f(z)}\right)-f(z)\right|\\&\leq&
|f(z)| \left(\frac{|L(z)|}{\tilde{\nu}(r)}\right)^n\left|\partial^{J}\left(\frac{\partial_{z_i}(f(z))}{f(z)}\right)\right|+|f(z)|\nonumber\\&\leq&
C_2||a||^n\frac{\sqrt[4]{M(\sqrt{m}r)}}{\tilde{\nu}^n(r)}m^2 \left(\sqrt{M(\sqrt{m}r)}\right)^{1+\varepsilon_1}\;\tilde{\nu}(r)^{-\frac{1}{4}}+m^2 \left(\sqrt{M(\sqrt{m}r)}\right)^{1+\varepsilon_1}\;\tilde{\nu}(r)^{-\frac{1}{4}}\nonumber\\&\leq&
\left(m^2C_2||a||^n \frac{1}{M(\sqrt{m}r)^{\frac{1-2\varepsilon_1}{4}}\tilde{\nu}^n(r)}+\frac{1}{M(\sqrt{m}r)^{\frac{1-\varepsilon_1}{2}}}\right) M(\sqrt{m}r)\;\tilde{\nu}(r)^{-\frac{1}{4}}\nonumber
,\eea
for all values of $r=||z||\not\in [0,1]\cup E$.
Since $M(r)\to +\infty$ as $r\to +\infty$, from (\ref{ew22}), we get
\bea\label{ew23}\left|f(z)\left(\frac{L(z)}{\tilde{\nu}(r)}\right)^n\partial^{J}\left(\frac{\partial_{z_i}(f(z))}{f(z)}\right)-f(z)\right|\leq M(\sqrt{m}r)\;\tilde{\nu}(r)^{-\frac{1}{4}},\eea
for all large values of $r$ such that $r=||z||\not\in [0,1]\cup E$, where $z\in C^m_{(0)}(r)$.
Also we have
\bea\label{ew24}|f(z)|>M(\sqrt{m}r) \tilde{\nu}(r)^{-\frac{1}{4}+\delta},\eea
where $z\in C^m_{(0)}(r)$. Now from (\ref{ew23}) and (\ref{ew24}), we see that
\bea\label{ew25}\left|\frac{1}{|f(z)|}\left(f(z)\left(\frac{L(z)}{\tilde{\nu}(r)}\right)^n\partial^{J}\left(\frac{\partial_{z_i}(f(z))}{f(z)}\right)-f(z)\right)\right|\leq \tilde{\nu}(r)^{-\delta},\eea
for all large values of $r=||z||\not\in [0,1]\cup E$, where $z\in C^m_{(0)}(r)$.
From (\ref{ew25}), we can write
\bea\label{ew26} f(z)\left(\frac{L(z)}{\tilde{\nu}(r)}\right)^n\partial^{J}\left(\frac{\partial_{z_i}(f(z))}{f(z)}\right)-f(z)=|f(z)|\eta_1(z),\eea
where $\eta_1(z)\to 0$ as $||z||\to \infty$, where $r=||z||\not\in [0,1]\cup E$.
Define
\beas
\eta(z):=\left\{
\begin{array}{ll}
\eta_1(z) \frac{\left|f(z)\right|}{f(z)}, &\text{if}\; f(z) \neq 0 \\
\eta_1(z), & \text{if}\; f(z)=0.
\end{array}
\right.
\eeas

Then $\eta(z) \rightarrow 0$ as $||z|| \rightarrow \infty$ and so from (\ref{ew26}), we obtain
\bea\label{ew27} \partial^{J}\left(\frac{\partial_{z_i}(f(z))}{f(z)}\right)=\left(\frac{\tilde{\nu}(r)}{L(z)}\right)^n (1+o(1)),\eea
for all large values of $r$ such that $r=||z||\not\in [0,1]\cup E$, where $z\in C^m_{(0)}(r)$.

If $|J|=0$, then from (\ref{ew27}), we get
\bea\label{ew28} \frac{\partial_{z_i}(f(z))}{f(z)}=\frac{\tilde{\nu}(r)}{L(z)} (1+o(1)),\eea
for all values of $r$ such that $r=||z||\not\in [0,1]\cup E$, where $z\in C^m_{(0)}(r)$.
Let $I=(i_{11},\ldots,i_{1i},\ldots,i_{1m})$, where $i_{1i}=1$ and $i_{1j}=0$ for $j\neq i$. Clearly $|I|=1$ and so from (\ref{ew28}), we have
\beas \frac{\partial^{I}(f(z))}{f(z)}=\frac{\tilde{\nu}(r)}{L(z)} (1+o(1)),\eeas
for all large values of $r$ such that $r=||z||\not\in [0,1]\cup E$, where $z\in C^m_{(0)}(r)$.

\medskip
Suppose $|J|=1$. Let $\partial^J(f)=\partial_{z_j}(f)$ for $j\in\mathbb{Z}[1,m]$ and $I_2=(i_{21},\ldots,i_{2i},\ldots, i_{2j},\ldots,i_{2m})$, where $i_{2i}=1$, $i_{2j}=1$ and $i_{2k}=0$ for $i,j\neq k$. Clearly $|I|=2$ and $\partial^{I}(f(z))=\partial_{z_j}(\partial_{z_i}(f(z)))$.
Now from (\ref{eyy23}) and (\ref{ew27})-(\ref{ew28}), we can easily conclude that
\bea\label{ew30} \frac{\partial^{I}(f(z))}{f(z)}=\frac{\partial_{z_j}(\partial_{z_i}(f(z)))}{f(z)}=\left(\frac{\tilde{\nu}(r)}{L(z)}\right)^2 (1+o(1)),\eea
for all large values of $r$ such that $r=||z||\not\in [0,1]\cup E$, where $z\in C^m_{(0)}(r)$.

\medskip
Suppose $|J|=2$. Let $\partial^J(f)=\partial_{z_k}(\partial_{z_l}(f))$ for any $k,l\in\mathbb{Z}[1,m]$. Assume that $i\leq k\leq l$. Let
$I_3=(i_{31},\ldots,i_{3i},\ldots,i_{3k},\ldots,i_{3l},\ldots,i_{3m})$, where $i_{3i}=1$, $i_{3k}=1$, $i_{3l}=1$ and $i_{3j}=0$ for $i,k,l\neq j$. Clearly $|I|=3$ and $\partial^{I}(f(z))=\partial_{z_l}(\partial_{z_k}(\partial_{z_i}(f(z))))$.
Using (\ref{eyy25}) and (\ref{ew27})-(\ref{ew30}), we get
\beas \frac{\partial^{I}(f(z))}{f(z)}=\frac{\partial_{z_l}(\partial_{z_k}(\partial_{z_i}(f(z))))}{f(z)}=\left(\frac{\tilde{\nu}(r)}{L(z)}\right)^3 (1+o(1)),\eeas
for all large values of $r$ such that $r=||z||\not\in [0,1]\cup E$, where $z\in C^m_{(0)}(r)$.
By repeating this process, it can be deduced from finite induction that
\beas \frac{\partial^{I}(f(z))}{f(z)}=\left(\frac{\tilde{\nu}(r)}{L(z)}\right)^{|I|} (1+o(1)),\eeas
for all large values of $r$ such that $r=||z||\not\in [0,1]\cup E$, where $z\in C^m_{(0)}(r)$.
\end{proof}

\section{{\bf Results on the partial differential equations $\partial^I(f(z))-e^{P(z)}f(z)=Q(z)$}}
By making use of Lemma \ref{l2.3}, it is easy to see that if $P(z)$ is non-constant, then every non-zero solution $f(z)$ of the equation 
\[\partial^I(f(z))-e^{P(z)}f(z)=0\]
is an entire function of order $\rho(f)=+\infty$. Let us consider non-homogeneous partial differential equation
\bea\label{e4.1} \partial^I(f(z))-e^{P(z)}f(z)=Q(z),\eea
where $P(z)$ and $Q(z)$ are entire functions in $\mathbb{C}^m$. If $m=1$ and $|I|=k$, then (\ref{e4.1}) gives
\bea\label{e4.2} f^{(k)}(z)+e^{P(z)}f(z)=Q(z),\;z\in\mathbb{C}.\eea

\smallskip
In 1998, Gundersen and Yang \cite{GY1} proved that if $P$ is a non-constant polynomial and if $Q\equiv 1$, then every solution $f$ of (\ref{e4.2}) is an entire function of infinite order. It was shown later in \cite{c1} that, when $Q$ is a non-zero polynomial and $P$ is a non-constant polynomial, every solution of (\ref{e4.2}) has infinite order and $\rho_1(f)$ is a positive integer such that $\rho_1(f)\leq \deg(P)$. In 2016, Cao \cite{c2} showed that if $P$ is a non-constant polynomial and if $f$ be a nonzero entire solution of (\ref{e4.2}), where $Q$ is an entire function that is ``small'' with respect to $f$, then $\rho_1(f)=\deg(P)$.

\smallskip
In this section, we continue to consider (\ref{e4.1}) with the case when $Q$ is an entire function that is ``small'' with respect to the solutions and $P$ is a non-constant polynomial.

\begin{theo}\label{t4.1} Let $P$ be a non-constant polynomial and let $f$ be a non-zero entire solution of (\ref{e4.1}), where $Q$ is an entire function that is ``small'' with respect to $f$. Then $\rho_1(f)=\deg(P)$.
\end{theo}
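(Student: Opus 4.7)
My plan is to establish the equality $\rho_1(f)=\deg P$ by proving the matching inequalities $\rho_1(f)\le\deg P$ and $\rho_1(f)\ge\deg P$, using Theorem \ref{t3.4} (Wiman--Valiron in $\mathbb{C}^m$) for the first and Theorem \ref{t2.1} (the multi-variable logarithmic partial derivative estimate) for the second. A preliminary remark: Lemma \ref{l2.3} applied to the identity $e^{P}=\partial^{I}(f)/f-Q/f$ gives
\[
T(r,e^{P})\le T(r,f)+T(r,Q)+S(r,f)=T(r,f)+S(r,f),
\]
and since $T(r,e^{P})\sim c\,r^{\deg P}$, one immediately has $T(r,f)\gtrsim r^{\deg P}$, so $f$ is transcendental and Theorems \ref{t3.4} and \ref{t2.1} are available.

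For the upper bound, I fix $L(z)=z_1+\cdots+z_m$ and, for each large $r$ outside a thin exceptional set, choose a Wiman--Valiron point $z_r\in C^m_{(0)}(r)$ with $|f(z_r)|>M(\sqrt{m}\,r,f)\tilde\nu(r)^{-1/4+\delta}$. Theorem \ref{t3.4} yields
\[
\frac{\partial^{I}(f(z_r))}{f(z_r)}=\Bigl(\frac{\tilde\nu(r)}{L(z_r)}\Bigr)^{|I|}(1+o(1)),
\]
and rewriting (\ref{e4.1}) as $\partial^{I}(f)/f=e^{P}+Q/f$ gives
\[
e^{P(z_r)}=\Bigl(\frac{\tilde\nu(r)}{L(z_r)}\Bigr)^{|I|}(1+o(1))-\frac{Q(z_r)}{f(z_r)}.
\]
Because $|f(z_r)|$ is comparable to $M(\sqrt{m}\,r,f)$ while $T(r,Q)=o(T(r,f))$, the term $Q(z_r)/f(z_r)$ is absorbed in the $o(1)$. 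Taking moduli and then real logarithms,
\[
\mathrm{Re}\,P(z_r)=|I|\bigl(\log\tilde\nu(r)-\log|L(z_r)|\bigr)+o(1);
\]
combined with $|L(z_r)|\le\sqrt{m}\,r$ and $|\mathrm{Re}\,P(z_r)|\le Cr^{\deg P}$ this yields $\log\tilde\nu(r)=O(r^{\deg P})$, and Theorem \ref{t3.3a} then gives $\rho_1(f)\le\deg P$.

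For the lower bound, pick, for each large $r$, a point $w_r$ on $\|z\|=r$ at which $\mathrm{Re}\,P$ attains its maximum over the sphere; since $P$ is a non-constant polynomial of degree $d=\deg P$, this maximum is $(1+o(1))c_{P}\,r^{d}$ with $c_{P}>0$. Theorem \ref{t2.1}, applied with base index $(0,\ldots,0)$ and $I_n=I$, yields
\[
\Bigl|\frac{\partial^{I}(f(w_r))}{f(w_r)}\Bigr|\le B\Bigl(\frac{T(\alpha^{2}r,f)}{r}+\frac{n_{f}(\alpha^{2}r)}{r}\Bigr)^{|I|}
\]
for all $r$ outside a set of finite logarithmic measure. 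Writing (\ref{e4.1}) as $\partial^{I}(f)/f=e^{P}+Q/f$ and granting the pointwise bound $|Q(w_r)/f(w_r)|\le\tfrac{1}{2}|e^{P(w_r)}|$, we conclude
\[
\tfrac{1}{2}\exp\bigl(c_{P}r^{d}(1+o(1))\bigr)\le B\Bigl(\frac{T(\alpha^{2}r,f)}{r}+\frac{n_{f}(\alpha^{2}r)}{r}\Bigr)^{|I|}.
\]
Lemma \ref{l2.4} absorbs $n_{f}(\alpha^{2}r)$ into $T(\alpha^{3}r,f)$, and Lemma \ref{l3.1} allows the passage from $\alpha^{3}r$ back to $r$, giving $\log T(r,f)\ge c'r^{d}$ along a sequence $r_{k}\to\infty$; hence $\rho_{1}(f)\ge\deg P$ (and, as a byproduct, $\rho(f)=+\infty$).

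The main obstacle is precisely the pointwise bound $|Q(w_r)/f(w_r)|\le\tfrac{1}{2}|e^{P(w_r)}|$: smallness of $Q$ with respect to $f$ is a Nevanlinna-characteristic statement and does not control $|f|$ at a point $w_r$ chosen from $P$ alone. My plan is to replace $w_r$ by a nearby point $w'_r$ produced by a Cartan-type exceptional-set argument, removing the set $\{z:\log|f(z)|<-K\log T(\|z\|,f)\}$, whose projection onto $\|z\|=r$ has small angular measure; one can then still arrange $\mathrm{Re}\,P(w'_r)=(1+o(1))c_{P}r^{d}$ while gaining the usable lower bound $|f(w'_r)|\ge T(r,f)^{-K}$. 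Combined with $T(r,f)\gtrsim r^{d}$ and the fact that $|Q|$ is at most $\exp(o(T(r,f)))$, this makes $|Q(w'_r)/f(w'_r)|$ exponentially small compared with $|e^{P(w'_r)}|$, and the lower-bound argument goes through.
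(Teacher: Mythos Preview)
Your upper bound $\rho_1(f)\le\deg P$ is essentially the paper's argument: apply Theorem \ref{t3.4} at a Wiman--Valiron point $z_r\in C^m_{(0)}(r)$, rewrite (\ref{e4.1}) as $e^{P}=\partial^{I}(f)/f-Q/f$, and read off $\log\tilde\nu(r)=O(r^{\deg P})$.

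For the lower bound, however, the paper does \emph{not} switch to Theorem \ref{t2.1} at a separately chosen point $w_r$; it stays at the same Wiman--Valiron point $z_r$ and uses Theorem \ref{t3.4} a second time. This is the simplification you are missing. Since $|f(z_r)|=M(\sqrt m\,r,f)$ is maximal, the smallness hypothesis gives $Q(z_r)/f(z_r)=o(1)$ directly---no Cartan-type excision is needed. Taking the principal logarithm of $e^{P(z_r)}=\partial^{I}(f(z_r))/f(z_r)-Q(z_r)/f(z_r)$ and invoking Theorem \ref{t3.4} together with the two-sided polynomial estimate (\ref{mm}) on the distinguished torus, the paper obtains
\[
\|a_n\|_1\,r^{n}(1-o(1))\;\le\;|P(z_r)|\;\le\;|I|\log\tilde\nu(r)+O(1),
\]
whence $\rho_1(f)\ge n=\deg P$ by Theorem \ref{t3.3a}.

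Your alternative route has a genuine gap, and the proposed Cartan-type repair does not close it. From $T(r,Q)=o(T(r,f))$ and Lemma \ref{l3.2} one only obtains $|Q(w'_r)|\le M(r,Q)\le\exp\bigl(o(T(2r,f))\bigr)$; combined with $|f(w'_r)|\ge T(r,f)^{-K}$ this gives
\[
\Bigl|\frac{Q(w'_r)}{f(w'_r)}\Bigr|\;\le\;T(r,f)^{K}\exp\bigl(o(T(2r,f))\bigr).
\]
With no a priori control on $\rho_1(f)$ the factor $\exp\bigl(o(T(2r,f))\bigr)$ can dwarf $|e^{P(w'_r)}|\asymp e^{c_P r^{d}}$; even under the contradiction hypothesis $\rho_1(f)<d$ one has only $T(2r,f)\le\exp\bigl((2r)^{d-\varepsilon}\bigr)$, and a quantity that is $o$ of that can still be doubly exponential in $r$, so the required comparison $|Q/f|\le\tfrac12|e^{P}|$ fails. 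The correct remedy is not to sharpen the pointwise estimate on $Q/f$ at a point chosen from $P$, but to evaluate at the Wiman--Valiron point throughout, where $|f|$ is maximal and the difficulty disappears.
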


\begin{proof} By the given condition, we can say that any solution $f(\not \equiv 0)$ of (\ref{e4.1}) is transcendental. Now by Theorem \ref{t3.4}, there exists a subset $E\subset (1,+\infty)$ with finite logarithmic measure such that for $z_r\in \mathbb{C}^m_{(0)}(r)$ satisfying $||z_r||=r\not\in [0,1]\cup E$ and $M(\sqrt{m}r,f)=|f(z_r)|$, we have
\bea\label{m2} \frac{\partial^I f(z_r)}{f(z_r)}=\left(\frac{\tilde{\nu}(r)}{L(z_r)}\right)^{|I|}(1+o(1))\eea
as $r\to \infty$, where $L(z)=\sum_{j=1}^m a_jz_j$ with $a_j\in\mathbb{C}\backslash \{0\}$.
Since $Q$ is an entire function that is ``small'' with respect to $f$, $M(\sqrt{m}r,f)$ increases faster than $M(\sqrt{m}r,Q)$ and so
\bea\label{m3} \lim\limits_{r\rightarrow +\infty}\left|\frac{Q(z_r)}{f(z_r)}\right|\leq \lim\limits_{r\rightarrow +\infty}\frac{M(\sqrt{m}r,Q)}{M(\sqrt{m}r,f(z_r))}=0,\;\;\text{i.e.,}\;\;\frac{|Q(z)|}{|f(z)|}=o(1) \eea
for sufficiently large $||z_r||=r\not\in [0,1]\cup E$. Let $\deg(P)=n$.
Now from (\ref{e4.1}), we get
\bea\label{m5} \left|\frac{\partial^I(f(z))}{f(z)}\right|\leq \frac{|Q(z)|}{|f(z)|}+\left|e^{P(z)}\right|.\eea

Note that for $z_r\in \mathbb{C}^m_{(0)}(r)$, we have $|L(z_r)|\leq ||a|| r$, where $||a||=\sum_{j=1}^m|a_j|$. Now substituting (\ref{mm}) and (\ref{m2})-(\ref{m3}) into (\ref{m5}), we have
\beas |I| \log \tilde{\nu}(r) &\leq& \log \left( \frac{|Q(z_r)|}{|f(z_r)|}+\left|e^{P(z_r)}\right|\right)+|I|\log r +|I|\log ||a||+O(1)
\\&\leq& |P(z_r)|+|I|\log r+O(1)\leq ||a_n||_1r^n(1+ o(1))+|I|\log r+O(1)
\eeas
and so 
\[\log \log \tilde{\nu}(r)\leq n\log r+\log \log r + O(1)\]
for sufficiently large $r = ||z_r|| \not\in [0,1]\cup E$. Then by Theorem \ref{t3.3a}, we have 
\[\rho_1(f)\leq n=\deg(P).\]

\smallskip
Taking the principal branch of the logarithm, we deduce from (\ref{e4.1}) that
\bea\label{m7}P(z)=\log \left( \frac{\partial^I(f(z))}{f(z)} - \frac{Q(z)}{f(z)} \right)
=\log \left| \frac{\partial^I(f(z))}{f(z)}-\frac{Q(z)}{f(z)}\right|+ i\arg \left(\frac{\partial^I(f(z))}{f(z)}-\frac{Q(z)}{f(z)}\right).
\eea

\smallskip
Substituting (\ref{mm}) and (\ref{m2})-(\ref{m3}) into (\ref{m7}), we get
\beas ||a_n||_1 r^n(1 - o(1))\leq |P(z_r)|&\leq& \log \left|\frac{\partial^I(f(z_r))}{f(z_r)}\right|+\log \left(\left|\frac{Q(z_r)}{f(z_r)}\right|+e\right)+O(1)\\&\leq& |I| \log \tilde{\nu}(r)+O(1)
\eeas
and so 
\[n \log r \leq \log \log \tilde{\nu}(r)-\log \log r+O(1)\]
for sufficiently large $r=||z_r||\not\in [0,1]\cup E$. Then Theorem \ref{t3.3a} gives 
\[\deg(P) = n \leq \rho_1(f).\]

Hence $\rho_1(f)=\deg(P)$.
\end{proof}

\medskip
We know that if $\rho(f)<+\infty$, then $\rho_1(f)=0$. So from Theorem \ref{t4.1}, we get the following.
\begin{cor}\label{c4.1} Let $f:\mathbb{C}^m\to\mathbb{C}$ be a non-constant entire function of finite order. If $\partial^I(f)$ and $f$ share one finite value $a$ CM, then $\partial^I(f)-a=c(f-a)$ for some constant $c\in\mathbb{C}\backslash \{0\}$.
\end{cor}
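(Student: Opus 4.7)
The plan is to convert the CM-sharing hypothesis into a partial differential equation of type (\ref{e4.1}), and then invoke Theorem \ref{t4.1} contrapositively to force the exponential factor to be constant.

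First, since $\partial^{I}(f)$ and $f$ share the value $a$ CM, the quotient
\[
\varphi(z) := \frac{\partial^{I}(f(z)) - a}{f(z) - a}
\]
is zero-free and holomorphic on the simply connected space $\mathbb{C}^{m}$. Consequently $\varphi = e^{P}$ for some entire $P \colon \mathbb{C}^{m} \to \mathbb{C}$, which rearranges to
\[
\partial^{I}(f(z)) - e^{P(z)} f(z) = a\bigl(1 - e^{P(z)}\bigr),
\]
exactly the form (\ref{e4.1}) with $Q(z) = a\bigl(1 - e^{P(z)}\bigr)$.

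Next I would show that $P$ is a polynomial. Using $T(r,\partial^{I}(f)) \leq T(r,f) + S(r,f)$ (an immediate consequence of Lemma \ref{l2.3}) together with the identity for $\varphi$, one obtains
\[
T(r, e^{P}) \leq T(r,\partial^{I}(f)-a) + T(r, f - a) + O(1) \leq 2\, T(r,f) + O(\log r)
\]
for all $r$ outside a set of finite logarithmic measure. Hence $\rho(e^{P}) \leq \rho(f) < +\infty$. Since a transcendental entire $P$ on $\mathbb{C}^{m}$ would force $M(r, e^{P}) = e^{M(r, \mathrm{Re}\, P)}$ to have infinite order, $P$ must be a polynomial.

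Assume for contradiction that $\deg P \geq 1$. Applying Theorem \ref{t4.1} to the equation $\partial^{I}(f) - e^{P} f = Q$ with $Q = a(1 - e^{P})$ yields $\rho_{1}(f) = \deg P \geq 1$. However, $\rho(f) < +\infty$ immediately gives $\rho_{1}(f) = 0$, a contradiction. Therefore $\deg P = 0$, so $e^{P} \equiv c$ for some $c \in \mathbb{C}\setminus\{0\}$, and the identity $\partial^{I}(f) - a = c(f - a)$ follows.

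The main obstacle is the strict applicability of Theorem \ref{t4.1}, which assumes $Q$ is ``small'' with respect to $f$; but $T(r, Q) \sim c r^{\deg P}$ and the lower bound just obtained only gives $T(r,f) \gtrsim r^{\deg P}$, so a priori these are of the same order. What is actually used in the proof of Theorem \ref{t4.1} is only that $|Q(z_{r})/f(z_{r})| = o(1)$ at the maximum-modulus points $z_{r} \in C^{m}_{(0)}(r)$ of $f$; this can be verified directly. Indeed, at such $z_{r}$ the equation together with Theorem \ref{t3.4} forces
\[
|e^{P(z_{r})}| = \left(\frac{\tilde{\nu}(r)}{|L(z_{r})|}\right)^{|I|}(1 + o(1)),
\]
which is merely polynomial in $r$ once $\rho(f) < +\infty$ (so $\tilde{\nu}(r)$ is polynomial in $r$), whereas $|f(z_{r})| = M(\sqrt{m}\, r, f) \geq \exp\bigl(T(\sqrt{m}\, r, f)\bigr) \geq \exp(c r^{\deg P})$ by Lemma \ref{l3.2}. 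Thus $|Q(z_{r})|$ grows polynomially while $|f(z_{r})|$ grows at least exponentially in $r^{\deg P}$, so the ratio tends to $0$ as required. The sub-case $a = 0$ is handled even more directly: the equation reduces to the homogeneous $\partial^{I}(f) = e^{P} f$, and the introductory remark of Section 4 already yields $\rho(f) = +\infty$ whenever $P$ is non-constant, contradicting the finite-order hypothesis.
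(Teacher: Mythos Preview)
Your overall strategy---write $(\partial^{I}(f)-a)/(f-a)=e^{P}$, show $P$ is a polynomial, then invoke Theorem~\ref{t4.1} to force $\deg P=0$ via $\rho_{1}(f)=0$---is exactly the paper's one-line derivation of the corollary. You also correctly spot the potential obstacle: with $Q=a(1-e^{P})$ one has $T(r,Q)\asymp T(r,e^{P})\asymp T(r,f)$, so $Q$ is not ``small'' in the usual $o(T(r,f))$ sense.

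However, your workaround for this obstacle has a gap. You claim that at the maximum-modulus points $z_{r}\in C^{m}_{(0)}(r)$ one gets $|e^{P(z_{r})}|=(\tilde\nu(r)/|L(z_{r})|)^{|I|}(1+o(1))$, ``which is merely polynomial in $r$''. But $z_{r}=(re^{i\theta_{1}},\ldots,re^{i\theta_{m}})$ is dictated by $f$, and $L(z_{r})=r\sum_{j}a_{j}e^{i\theta_{j}}$ has no a~priori lower bound; nothing prevents $|L(z_{r})|$ from being arbitrarily small along a sequence of $r$'s, so the claimed polynomial control on $|e^{P(z_{r})}|$---and hence on $|Q(z_{r})|$---is not established. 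Your subsequent comparison with $|f(z_{r})|\ge\exp(cr^{\deg P})$ therefore does not close the argument.

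The clean fix, which is presumably what the paper intends, is to avoid the non-homogeneous equation altogether: set $g=f-a$. Since $a$ is constant, $\partial^{I}(g)=\partial^{I}(f)$, and the CM-sharing identity becomes the \emph{homogeneous} equation
\[
\partial^{I}(g)-e^{P}g=0,
\]
i.e.\ the equation \eqref{e4.1} with $Q\equiv 0$, which is trivially small with respect to $g$. Now Theorem~\ref{t4.1} applies directly (under the contradiction hypothesis that $P$ is non-constant) and gives $\rho_{1}(g)=\deg P\ge 1$, contradicting $\rho_{1}(g)=\rho_{1}(f)=0$. This sidesteps the smallness issue entirely and makes your discussion of $|Q(z_{r})/f(z_{r})|$ and the case split on $a=0$ unnecessary.
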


Obviously Corollary \ref{c4.1} extends Theorem 1 \cite{GY1} to the case of higher dimensions.

\vspace{0.1in}
{\bf Compliance of Ethical Standards:}\par

{\bf Conflict of Interest.} The authors declare that there is no conflict of interest regarding the publication of this paper.\par

{\bf Data availability statement.} Data sharing not applicable to this article as no data sets were generated or analysed during the current study.


\begin{thebibliography}{99}
\bibitem{c1} T. B. Cao, Growth of solutions of a class of complex differential equations, Ann. Polon. Math., 95 (2) (2009), 141-152.
\bibitem{c2} T. B. Cao, On the Br\"{u}ck conjecture, Bull. Aust. Math. Soc., 93 (2016), 248-259.
\bibitem{ck} T. B. Cao and R. J. Korhonen, A new version of the second main theorem for meromorphic mappings intersecting hyperplanes in several complex variables, J. Math. Anal. Appl., 444 (2) (2016), 1114-1132.
\bibitem{CK1} T. B. Cao and R. J. Korhonen, Growth of meromorphic solutions and delay partial differential equations, Israel J. Math., (2025), 1-27, DOI:10.1007/s11856-025-2756-8
\bibitem{cx} T. B. Cao and L. Xu, Logarithmic difference lemma in several complex variables and partial difference equations, Ann. Mat. Pura Appl., 199 (2020), 767-794.
\bibitem{CLL} Q. Cheng, Y. Li and Z. Liu, On meromorphic solutions of nonlinear partial differential-difference equations of first order in several complex variables, Bull. Korean Math. Soc., 60 (2) (2023), 425-441.
\bibitem{CY1} W. Cherry and  Z. Ye, Nevanlinna's theory of value distribution second main theorem and its error terms. Springer, Berlin (2001).
\bibitem{PVD2} P. V. Dovbush, Zalcman-Pang's lemma in $C^N$, Complex Var. Elliptic Equ., 66 (12) (2021), 1991-1997.
\bibitem{GG1} G. G. Gunderson, On the question of whether $f''+e^{-z}f'+B(z)f=0$ can admit a solution $f \not \equiv 0$ of finite order, Proc. Roy. Soc. Edinburgh A, 102 (1986), 9-17.
\bibitem{GG2} G. G. Gundersen, Estimates for the logarithmic derivative of a meromorphic function, plus similar estimates, J. London Math. Soc., 37 (2) (1988), 88-104.
\bibitem{GY1} G. G. Gundersen and L. Z. Yang, Entire functions that share one value with one or two of their derivatives, J. Math. Anal. Appl., 223 (1998), 88-95.
\bibitem{HZ1} W. Hao and Q. Zhang, Meromorphic solutions of a class of nonlinear partial differential equations, Indian J. Pure Appl. Math., 2025: 1-11, doi.org/10.1007/s13226-025-00779-5
\bibitem{WKH2} W. K. Hayman, Meromorphic Functions, Clarendon Press, Oxford (1964).
\bibitem{AH} A. Hinkkanen, A sharp form of Nevanlinna's second fundamental theorem, Invent. Math., 108 (1992), 549-574.
\bibitem{LH1} L. H\"{o}rmander, An introduction to complex analysis in several variables, Van Nostrand. Princeton, N. J. 1973.
\bibitem{HLY1} P. C. Hu, P. Li and C. C. Yang, Unicity of Meromorphic Mappings. Springer, New York (2003).
\bibitem{ps1} P. C. Hu and C. C. Yang, The Tumura-Clunie theorem in several complex variables, Bull. Aust. Math. Soc., 90 (2014), 444-456.
\bibitem{JV1} G. Jank and L. Volkmann, Einfuhrung in die Theorie der ganzen und meromorphen Funktionen mit Anwendungen auf Differentialgleichungen, Birkhauser, Basel-Boston, 1985.
\bibitem{BQL2} B. Q. Li, On Picard's theorem, J. Math. Anal. Appl., 460 (2018), 561-564. 
\bibitem{BQL4} B. Q. Li and L. Yang, Picard type theorems and entire solutions of certain nonlinear partial differential equations, J. Geom. Anal., 35, 234 (2025). doi.org/10.1007/s12220-025-02067-4
\bibitem{FL1} F. L\"{u}, Theorems of Picard type for meromorphic function of several complex variables, Complex Var. Elliptic Equ., 58 (8) (2013), 1085-1092.
\bibitem{LB} F. L\"{u} and W. Bi, On entire solutions of certain partial differential equations, J. Math. Anal. Appl., 516 (1) (2022), 126476. doi.org/10.1016/j.jmaa.2022.126476
\bibitem{FL} F. L\"{u}, On meromorphic solutions of certain partial differential equations, Canadian Math. Bull., 2025:1-15, doi:10.4153/S0008439525000347
\bibitem{NW} J. Noguchi and J. Winkelmann, Nevanlinna theory in several complex variables and Diophantine approximation, Springer Tokyo Heidelberg New York Dordrecht London, 2013.
\bibitem{MO} M. Ozawa, On a solution of $w''+e^{-z}w'+(az+b)w=0$, Kodai Math. J., 3 (1980), 295-309.
\bibitem{MR1} M. Ru, Nevanlinna theory and its relation to diophantine approximation, World Scientific Publishing Co., Singapore, 2001.
\bibitem{WS4} W. Stoll, Normal families of non-negative divisors, Math Z., 84 (1964), 154-218.
\bibitem{WS2} W. Stoll, Holomorphic functions of finite order in several complex variables, Conference Board of the Mathematical Sciences, Regional Conference Series in Mathematics 21, Amer. Math. Soc., 1974.
\bibitem{WS3} W. Stoll, Value distribution and the lemma of the logarithmic derivative on polydiscs, Internat. J. Math. \& Math. Sci., 6 (4) (1983), 617-669.
\bibitem{WS1} W. Stoll, Value distribution theory of meromorphic maps, Aspects of Math. E7 (1985), pp. 347, Vieweg-Verlag.
\bibitem{ZY1} Z. Ye, On Nevanlinna's second main theorem in projective space, Invent. Math., 122 (1995), 475-507.
\end{thebibliography}
\end{document}